\documentclass[11pt]{amsart}
\usepackage{color}
\usepackage{amsmath}
\usepackage[colorlinks=true]{hyperref}

\usepackage{amscd,amsthm,amsfonts,amssymb,esint}
\usepackage{fullpage}
\usepackage[all]{xy}
\usepackage{mathtools}
\usepackage{tikz-cd}

\newcommand\mO{\mathcal{O}}

\newcommand{\mb}[1]{\mathbb{#1}}
\newcommand{\mr}[1]{\mathrm{#1}}
\newcommand{\mc}[1]{\mathcal{#1}}
\newcommand{\ov}{\overline}
\newcommand\ra{\rightarrow}
\newcommand{\ora}[1]{\stackrel{#1}\longrightarrow}

\newcommand\wt{\widetilde}

\newcommand\Spe{\mathrm{Spec\,}}
\newcommand\di{\mathrm{div}}

\begin{document}
	\newtheorem{thm}{Theorem}[section]
	\newtheorem{prop}[thm]{Proposition}
	\newtheorem{lem}[thm]{Lemma}
	\newtheorem{cor}[thm]{Corollary}
	\newtheorem{conj}[thm]{Conjecture}
	\newtheorem{conv}[thm]{Convention}
	\newtheorem{ques}[thm]{Question} 
	\newtheorem{prb}[thm]{Problem}  
	
	\theoremstyle{definition}
	\newtheorem{definition}[thm]{Definition}
	\newtheorem{example}[thm]{Example}
	
	\theoremstyle{remark}
	\newtheorem{remark}[thm]{Remark}
	
	\numberwithin{equation}{section}
	
	\bibliographystyle{alpha}
	
	\title{A high-codimensional Yuan's inequality and its application to higher arithmetic degrees}
	\author{Jiarui Song}
	\address{School of Mathematical Sciences, Peking University, Beijing, P.R.China.}
	\email{soyo999@pku.edu.cn}

	\begin{abstract}
		In this article, we consider a dominant rational self-map $f: X\dashrightarrow X$ of a normal projective variety defined over a number field. We study the arithmetic degree $\alpha_k(f)$ for $f$ and $\alpha_k(f,V)$ of a subvariety $V$, which generalize the classical arithmetic degree $\alpha_1(f,P)$ of a point $P$.
		
		We generalize Yuan's arithmetic version of Siu's inequality to higher codimensions and utilize it to demonstrate the existence of the arithmetic degree $\alpha_k(f)$. Furthermore, we establish the relative degree formula $\alpha_k(f)=\max\{\lambda_k(f),\lambda_{k-1}(f)\}$.
		
		In addition, we prove several basic properties of the arithmetic degree $\alpha_k(f,V)$ and establish the upper bound $\overline{\alpha}_{k+1}(f,V)\leq \max\{\lambda_{k+1}(f),\lambda_{k}(f)\}$, which generalizes the classical result $\overline{\alpha}_f(P)\leq \lambda_1(f)$.
		
		Finally, we discuss a generalized version of the Kawaguchi-Silverman conjecture that was proposed by Dang-Ghioca-Hu-Lesieutre-Satriono, and we provide a counterexample to this conjecture.
	\end{abstract}
	
	\maketitle
	\setcounter{tocdepth}{1}
	\tableofcontents

\section{Introduction}\label{Intro}
We consider a dominant rational self-map $f: X\dashrightarrow X$ of a $d$-dimensional normal projective variety over $\ov{\mathbb{Q}}$. Let $H$ be an ample line bundle on $X$, and we define the \emph{$k$-th dynamical degree} of $f$ as
$$
\lambda_k(f)=\lim_{n\ra \infty}\big(((f^n)^*H^k\cdot H^{d-k})\big)^{1/n}.
$$

In the case of the base field being the complex field $\mathbb{C}$, the existence of the dynamical degree was established by Dinh and Sibony in \cite{dinh_borne_2005}. For any projective normal variety in arbitrary characteristic, Truong proved the existence in \cite{truong_relative_2020} by de Jong's alteration theorem. In \cite{dang_degrees_2020}, Dang established a high-codimensional version of Siu's inequality and utilized this inequality to prove the existence of the dynamical degree.

Another important invariant of $f$ is the \emph{arithmetic degree}. Let $X_f(\overline{\mathbb{Q}})$ be the set of algebraic points on which the $f$-orbit is well-defined. We set $h_X^+=\max\{h_X,1\}$, where $h_X$ is a fixed Weil height on $X$ associated to an ample line bundle, and define the arithmetic degree of $P\in X_f(\overline{\mathbb{Q}})$ as
\[\alpha_f(P)=\lim_{n\ra\infty}h_X^+(f^n(P))^{1/n}.\]
The existence of the limit above is a part of the Kawaguchi-Silverman conjecture, which is currently known only in certain special cases (see \cite{kawaguchi_dynamical_2016},\cite{kawaguchi_dynamical_2016-1}, \cite{meng_kawaguchi-silverman_2022}, \cite{matsuzawa_kawaguchi-silverman_2020}, \cite{jia_endomorphisms_2021}, \cite{wang_periodic_2024}, \cite{matsuzawa_arithmetic_2022}). Thus, we consider the following upper and lower limits:
\[\overline{\alpha}_f(P)=\limsup_{n\ra\infty}h_X^+(f^n(P))^{1/n}\]
and 
\[\underline{\alpha}_f(P)=\liminf_{n\ra\infty}h_X^+(f^n(P))^{1/n}.\]

It is natural to consider the relationship between the dynamical degree and the arithmetic degree of a rational self-map $f$ on a normal projective variety $X$ over $\ov{\mathbb{Q}}$. In \cite{kawaguchi_dynamical_2016-1}, Kawaguchi and Silverman show that $\ov{\alpha}_f(P)\leq \lambda_1(f)$, which unfortunately contains a mistake. In \cite{matsuzawa_upper_2020}, Matsuzawa followed their approach and proved this inequality. Xie in \cite{xie_remarks_2023} later gave another proof over function fields by the relative degree formula. Thus, if we develop the relative degree formula over number fields by Arakelov geometry, we are able to provide a more conceptual proof of this upper bound.

To study the behavior of subvarieties under a rational self-map, it is a natural idea to generalize the concept of dynamical degrees and arithmetic degrees. In their work \cite{dang_higher_2022}, Dang, Ghioca, Hu, Lesieutre and Satriono introduce the concept of higher arithmetic degrees. In this paper, we will concentrate on the high-dimensional case and prove several results related to higher arithmetic degrees.

\subsection{Definitions and main results}
  For a normal projective variety $X$ over $\ov{\mb{Q}}$ and $f:X\dashrightarrow X$, we can take a number field $K$ such that all coefficients of $(X,f)$ are contained in $K$. Then we can assume $X$ is defined over $K$. Furthermore, by the morphism $X\ra \Spe K\ra \Spe \mb{Q}$, we can consider $X$ as a variety over $\mb{Q}$. Note that we do not require a variety to be geometrically integral. By this reduction, we always assume $(X,f)$ are defined over $\mb{Q}$, and the theorems hold for general $X$ defined over a number field. 

 In order to define higher arithmetic degrees, we need to take an arithmetic model $(\mc{X},\ov{\mc{H}})$ of $(X, H)$, where $\ov{\mc{H}}$ is an ample Hermitian line bundle with generic fiber $H$. The dominant rational self-map $f$ of $X$ is lifted to a dominant rational self-map $F$ of $\mathcal{X}$. Let $\Gamma_F$ be the normalization of the graph of $F$ in $\mc{X}\times_{\Spe \mb{Z}}\mc{X}$, and $\pi_1,\pi_2$ be projections from $\Gamma_F$ onto the first and second component respectively. The following diagram commutes.
  \[\begin{tikzcd}
  	& \Gamma_F \arrow[ld, "\pi_1"'] \arrow[rd, "\pi_2"] &   \\
  	\mc{X} \arrow[rr, "F", dashed] &                                                 & \mc{X}
  \end{tikzcd}.\]
  
   Before giving the precise definitions of the higher arithmetic degrees, we record some foundational results to ensure that the corresponding limits are well-defined.
   
   We shall see in Corollary \ref{Cor_welldef} that the limit appearing in the definition of $\alpha_k(f)$ (cf. Definition \ref{Def1}) exists and is independent of the choice of the model $\mc{X}$ and the Hermitian line bundle $\ov{\mc{H}}$. Moreover, $\alpha_k(f)$ is shown to be a birational invariant (see Corollary \ref{birinv}).
   
   In Proposition \ref{welldefined}, we further establish that the upper and lower arithmetic degrees $\ov{\alpha}_{k+1}(f,V)$ and $\underline{\alpha}_{k+1}(f,V)$, whose definitions are also given in Definition~\ref{Def1}, are likewise independent of the choices of $\mc{X}$ and $\ov{\mc{H}}$. However, the existence of the corresponding limit defining $\alpha_{k+1}(f,V)$, as well as the geometric degree $\lambda_k(f,V)$, remains conjectural at present.
   
\begin{definition}\label{Def1}
	With all the notations and constructions above, for any integer $0\leq k\leq d+1$, we define the \emph{$k$-th arithmetic degree} of $F$ with respect to $\ov{\mc{H}}$ as
	\[\widehat{\deg}_k(F)=\big(\pi_2^*\ov{\mc{H}}^{k}\cdot \pi_1^*\ov{\mc{H}}^{d+1-k}\big).\]
	Then the \emph{$k$-th arithmetic degree} of $f$ is defined as
	\[\alpha_k(f)=\lim_{n\ra \infty}\widehat{\deg}_k(F^n)^{1/n}.\]
	Let $V$ be a $k$-dimenisonal subvariety of $X$. Assume the $f$-orbit of $V$ is well-defined. In other words, $f^n(V)$ is not contained in $I(f)$, where $I(f)$ is the indeterminacy locus of $f$. Let $\mc{V}$ denote the Zariski closure of $V$ in $\mc{X}$, we define
	\[\deg_k(f,V)=\big(\pi_2^*H^k\cdot \pi_1^{\#}V\big)\]
	\[\widehat{\deg}_{k+1}(F,\mc{V})=\big(\pi_2^*\ov{\mc{H}}^{k+1}\cdot \pi_1^{\#}\mc{V}\big).\]
	Here the pull-back of a subvariety by a birational map means the strict transform.
	
	The \emph{geometric degree} of $(f,V)$ is defined as 
\[\ov{\lambda}_k(f,V)=\limsup_{n\ra \infty}\deg_k(f^n,V)^{1/n},\]
\[\underline{\lambda}_k(f,V)=\liminf_{n\ra \infty}\deg_k(f^n,V)^{1/n}.\]
If $\ov{\lambda}_k(f,V)=\underline{\lambda}_k(f,V)$, we define 
\[\lambda_k(f,V)=\ov{\lambda}_k(f,V)=\lim_{n\ra \infty}\deg_k(f^n,V)^{1/n}.\]

The \emph{arithmetic degree} of $(f,V)$ is defined as 
\[\ov{\alpha}_{k+1}(f,V)=\limsup_{n\ra\infty}\widehat{\deg}_{k+1}(F^n,\mc{V})^{1/n},\]
\[\underline{\alpha}_{k+1}(f,V)=\liminf_{n\ra\infty}\widehat{\deg}_{k+1}(F^n,\mc{V})^{1/n}.\]
If $\ov{\alpha}_{k+1}(f,V)=\underline{\alpha}_{k+1}(f,V)$, we define 
\[\alpha_{k+1}(f,V)=\ov{\alpha}_{k+1}(f,V)=\lim_{n\ra\infty}\widehat{\deg}_{k+1}(F^n,\mc{V})^{1/n}.\]
\end{definition}

Siu's bigness theorem, also known as Siu's inequality, is a fundamental result in algebraic geometry. It was first proved by Siu in \cite{siu_effective_1993}, and later, Yuan proved an arithmetic version of the bigness theorem in \cite{yuan_big_2008}. Dang developed Theorem \ref{Dangresult} in \cite[Theorem 3.4.4]{dang_degrees_2020}, which can be considered as a high-codimensional generalization of Siu's inequality. Using this inequality, he was able to establish the existence of dynamical degrees. In this paper, we aim to prove an arithmetic analogue of \cite[Theorem 3.4.4]{dang_degrees_2020} as follows. It is also a high codimentional generalization of \cite{yuan_big_2008}.
\begin{thm}[Theorem \ref{Siu2}]\label{Siu2copy}
	Let $\mathcal{X}\ra \Spe\mb{Z}$ be an arithmetic variety with relative dimension $d$, and $r$ and $k$ be positive integers with $r+k\leq d$. Let $\ov{\mc{L}}_1,\ldots, \ov{\mc{L}}_{k}, \ov{\mc{M}},\ov{\mc{N}}_1,\ldots,\ov{\mc{N}_r}$ be nef  Hermitian line bundles. Suppose $\big(\ov{\mc{M}}^{d+1-r}\cdot\ov{\mc{N}}_1\cdots \ov{\mc{N}}_r\big)>0$. Then
	\[	\ov{\mc{L}}_1\cdots \ov{\mc{L}}_k\cdot\ov{\mc{N}}_1\cdots \ov{\mc{N}}_r\leq (d+2-r-k)^k\frac{\big(\ov{\mc{L}}_1,\cdots,\ov{\mc{L}}_k\cdot\ov{\mc{M}}^{d+1-r-k}\cdot\ov{\mc{N}}_1\cdots \ov{\mc{N}}_r\big)}{\big(\ov{\mc{M}}^{d+1-r}\cdot\ov{\mc{N}}_1\cdots \ov{\mc{N}}_r\big)}\cdot \ov{\mc{M}}^{k}\cdot\ov{\mc{N}}_1\cdots \ov{\mc{N}}_r.\]
Here the symbol ``$\leq$" will be explained in Lemma \ref{Siu1}.
\end{thm}
By this result, we are able to establish the submultiplicity formula and prove the existence of $\alpha_k(f)$.
\begin{thm}[Theorem \ref{Submulti2}]\label{Submulti}
	There is a positive constant $C>0$ such that for any dominant rational self-maps $F:\mc{X}\dashrightarrow \mc{X}$ and $G:\mc{X}\dashrightarrow \mc{X}$,
	\[\widehat{\deg}_k(F\circ G)\leq C\cdot\widehat{\deg}_k(F)\cdot\widehat{\deg}_k(G).\]
	The constant $C$ only depends on the integers $k, d=\dim \mc{X}-1$ and the ample Hermitian line bundle $\ov{\mc{H}}$.
\end{thm}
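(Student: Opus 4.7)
The plan is to lift the composition to a common arithmetic model and apply Theorem \ref{Siu2copy} a single time with a carefully chosen exponent, imitating Dang's geometric argument in \cite{dang_degrees_2020}. I would form the fiber product $\Gamma_G \times_{\mc{X},\,\pi_2^G,\,\pi_1^F} \Gamma_F$ and let $W$ denote the normalization of the unique component that dominates both factors. Writing $p : W \to \Gamma_G$ and $q : W \to \Gamma_F$ for the two projections, the induced map $W \to \Gamma_{F\circ G}$ is birational, so
\[\widehat{\deg}_k(F\circ G) = \big((\pi_2^F q)^* \ov{\mc{H}}^k \cdot (\pi_1^G p)^* \ov{\mc{H}}^{d+1-k}\big)_W.\]
I would then work on $W$ with the three nef Hermitian classes
\[A = (\pi_2^F q)^* \ov{\mc{H}}, \qquad B = (\pi_1^G p)^* \ov{\mc{H}}, \qquad M = (\pi_1^F q)^* \ov{\mc{H}} = (\pi_2^G p)^* \ov{\mc{H}},\]
the two descriptions of $M$ agreeing by the universal property of the fiber product.

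Next, assuming $1 \leq k \leq d$ (the endpoints $k=0$ and $k=d+1$ are immediate, since $\widehat{\deg}_0$ equals the constant $(\ov{\mc{H}}^{d+1})_{\mc{X}}$ and $\widehat{\deg}_{d+1}(F) = (\deg F)\cdot (\ov{\mc{H}}^{d+1})_{\mc{X}}$ is multiplicative in the generic degree of $F$), I would apply Theorem \ref{Siu2copy} on $W$ with $r = 0$ and with its ``$k$''-parameter equal to $d+1-k$, taking $\ov{\mc{L}}_1 = \cdots = \ov{\mc{L}}_{d+1-k} = B$ and $\ov{\mc{M}} = M$. The positivity hypothesis $(M^{d+1})_W > 0$ holds because $M$ is the pullback of the ample class $\ov{\mc{H}}$ under a generically finite map. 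Intersecting the resulting class inequality with the nef codimension-$k$ class $A^k$ then gives
\[\big(A^k \cdot B^{d+1-k}\big)_W \;\leq\; (k+1)^{d+1-k}\,\frac{(B^{d+1-k} \cdot M^k)_W}{(M^{d+1})_W}\,(A^k \cdot M^{d+1-k})_W.\]
A direct projection-formula computation, using that $\deg(p) = 1$ (since $\pi_1^F$ is birational) and $\deg(q) = \deg(G)$, identifies $(B^{d+1-k} \cdot M^k)_W = \widehat{\deg}_k(G)$, $(A^k \cdot M^{d+1-k})_W = \deg(G)\cdot \widehat{\deg}_k(F)$, and $(M^{d+1})_W = \deg(G)\cdot(\ov{\mc{H}}^{d+1})_{\mc{X}}$. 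The two factors of $\deg(G)$ cancel between numerator and denominator, so one may take
\[C = \frac{(k+1)^{d+1-k}}{(\ov{\mc{H}}^{d+1})_{\mc{X}}},\]
a constant depending only on $k$, $d$ and $\ov{\mc{H}}$, as required.

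The main obstacle will not be the combinatorial input from Siu itself, but rather the arithmetic-geometric setup of $W$: choosing a normal model of the relevant component of $\Gamma_G \times_{\mc{X}} \Gamma_F$ on which Theorem \ref{Siu2copy} applies (with $A$, $B$, $M$ nef) and on which the projection formula holds along the generically finite maps $p$ and $q$, and verifying carefully that $W \to \Gamma_{F\circ G}$ is genuinely birational. What makes a single application of Siu suffice, rather than the iteration one might naively attempt, is the pivot role of $M$: the factor $\deg(G)$ arising from the generic degree of $\pi_2^G$ appears symmetrically in $(A^k \cdot M^{d+1-k})_W$ and in $(M^{d+1})_W$ and cancels in the Siu ratio, producing a bound in which the dependence on $F$ and $G$ separates multiplicatively.
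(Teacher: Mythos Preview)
Your proposal is correct and follows essentially the same route as the paper: both build the common model $W$ (the paper calls it $\Gamma$, the normalization of the graph of $\pi_3^{-1}\circ\pi_2$), introduce the three nef pullback classes $A,B,M$, apply the $r=0$ case of the arithmetic Siu inequality once, and finish with the projection formula so that the generic degree of $q$ cancels. The only difference is a symmetry in how Siu is invoked: the paper bounds $A^k$ by a multiple of $M^k$ and then intersects with $B^{d+1-k}$, obtaining $C=(d+2-k)^k/(\ov{\mc{H}}^{d+1})$, whereas you bound $B^{d+1-k}$ by a multiple of $M^{d+1-k}$ and then intersect with $A^k$, obtaining $C=(k+1)^{d+1-k}/(\ov{\mc{H}}^{d+1})$. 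Either choice works.
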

Then the convergence of the limit in $\alpha_k(f)$ follows directly from this theorem and Fekete's lemma \cite[II]{fekete_uber_1923}.

In the function field case, the relative degree formula has been extensively studied, as seen in \cite{dinh_comparison_2011} and \cite{truong_relative_2020}. In \cite[Conjecture 1.8]{dang_higher_2022}, the authors proposed the conjecture that $\alpha_k(f)=\max\{\lambda_k(f),\lambda_{k-1}(f)\}$, which can be viewed as an arithmetic analogue of the relative degree formula, and they proved this conjecture for the case when $\dim X=2$ and $f$ is birational.  In this paper, we will prove the conjecture for general $X$ and $f$.
\begin{thm}[Theorem \ref{RelaDeg2}]\label{RelaDeg}
	Let $f$ be a dominant rational self-map of X. Then for any integer $1\leq k\leq d$,
	one has
\[\alpha_k(f)=\max\{\lambda_k(f),\lambda_{k-1}(f)\}.\]
\end{thm}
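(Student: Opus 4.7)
The plan is to obtain matching upper and lower bounds
$$c_1\cdot\max\{\deg_k(f^n),\deg_{k-1}(f^n)\}\;\le\;\widehat{\deg}_k(F^n)\;\le\;c_2\cdot\max\{\deg_k(f^n),\deg_{k-1}(f^n)\}$$
for positive constants $c_1,c_2$ independent of $n$, from which taking $n$-th roots immediately gives the identity $\alpha_k(f)=\max\{\lambda_k(f),\lambda_{k-1}(f)\}$. Both estimates will arise from a single calculation: iteratively applying the Gillet--Soul\'e inductive formula
$$N\cdot\widehat{c}_1(\overline{\mathcal{H}})\cdot\widehat{\alpha} \;=\; i_*\bigl(\widehat{\alpha}|_{\mathrm{div}(s)}\bigr) \,+\, \bigl(-\log\|s\|^2\bigr)\cdot c_1(\overline{\alpha})$$
to peel off all $d+1$ factors of $\overline{\mathcal{H}}$ from the intersection $\bigl(\pi_1^*\overline{\mathcal{H}}^{d+1-k}\cdot\pi_2^*\overline{\mathcal{H}}^k\bigr)$ on $\Gamma_{F^n}$.

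Concretely, I would fix generic sections $s_1,\ldots,s_{d+1-k}$ of $\mathcal{H}^N$ and $t_1,\ldots,t_k$ of $\mathcal{H}^M$, rescaled so that the Green functions $-\log\|s_j\|^2$ and $-\log\|t_l\|^2$ are bounded between two positive constants outside their (integrable) log-singularities along their divisors. Peel the $d+1-k$ copies of $\pi_1^*\overline{\mathcal{H}}$ using the $s_j$, then the $k$ copies of $\pi_2^*\overline{\mathcal{H}}$ using the $t_l$. After these $d+1$ steps, $N^{d+1-k}M^k\cdot\widehat{\deg}_k(F^n)$ decomposes as a sum of $d+1$ archimedean integrals $\int_{S_n}(-\log\|\cdot\|^2)\,T_n$ on successively restricted strata of $\Gamma_{F^n}(\mathbb{C})$, plus a residual base term coming from an arithmetic $0$-cycle whose geometric generic fibre is empty by codimension count. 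Pushing each current $T_n$ down to the generic fibre $\Gamma_{f^n}$ identifies its total mass with a universal multiple, in explicit powers of $N$ and $M$, of either $\deg_k(f^n)=((f^n)^*H^k\cdot H^{d-k})$, for the peelings of $\pi_1^*\overline{\mathcal{H}}$, or $\deg_{k-1}(f^n)=((f^n)^*H^{k-1}\cdot H^{d-k+1})$, for the peelings of $\pi_2^*\overline{\mathcal{H}}$. Combined with the uniform Green-function bounds, this yields the desired two-sided estimate.

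The main obstacle is proving the upper estimate $\int_{S_n}(-\log\|s\|^2)\,T_n\le C\,\|T_n\|$ with $C$ independent of $n$. Although the total mass of $T_n$ is controlled by the geometric degrees, its support migrates with the dynamics, and $-\log\|s\|^2$ has singularities along $\{s=0\}$; the lower direction is immediate from positivity once the Green functions are rescaled positive, but the upper direction requires that the log-singularities do not concentrate against the moving currents $T_n$. This reduces to Lebesgue integrability of $-\log\|s\|^2$ against smooth positive currents on each stratum, combined with an $n$-uniform mass bound for the restrictions of $T_n$ to the strata cut out by $\mathrm{div}(s_j)$ and $\mathrm{div}(t_l)$. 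The latter follows from ampleness of $\overline{\mathcal{H}}$ and, if needed, from Theorem~\ref{Siu2copy} applied fibrewise to control mixed intersections on $\Gamma_{F^n}(\mathbb{C})$.
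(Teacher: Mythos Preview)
Your lower bound direction works: once the Green functions are shifted positive, each archimedean term in the Gillet--Soul\'e decomposition is nonnegative with mass proportional to $\deg_k(f^n)$ (for the $\pi_1$-peelings) or $\deg_{k-1}(f^n)$ (for the $\pi_2$-peelings), so $\widehat{\deg}_k(F^n)\ge c_1\max\{\deg_k(f^n),\deg_{k-1}(f^n)\}$. The paper obtains the same inequality by a different mechanism, namely arithmetic Siu applied to $\overline{\mathcal O}_{\mathcal X}(t)$ versus $\overline{\mathcal H}$.

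The gap is in the upper bound. Your reduction to ``Lebesgue integrability of $-\log\|s\|^2$ against smooth positive currents'' plus an ``$n$-uniform mass bound'' does not yield $\int_{S_n}(-\log\|s\|^2)\,T_n\le C\|T_n\|$ with $C$ independent of $n$: integrability gives only finiteness for each fixed $n$, and a mass bound only controls the denominator. The log singularity along a fixed divisor can interact badly with smooth positive currents whose normalizations concentrate there---for instance, on $[0,1]$ the smooth probability measures $\mu_n=n\mathbf 1_{[0,1/n]}\,dx$ satisfy $\int(-\log x)\,d\mu_n=1+\log n\to\infty$ despite $\|\mu_n\|\equiv 1$. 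Nothing in your sketch rules out the analogous behaviour for $\pi_{1,*}T_n$ or $\pi_{2,*}T_n$ on $X(\mathbb C)$, and applying Theorem~\ref{Siu2copy} fibrewise bounds only geometric intersection numbers, not integrals of unbounded functions. The residual vertical $0$-cycle is likewise uncontrolled: its generic fibre being empty says nothing about the size of its degree over $\Spe\mathbb Z$ as $n$ varies. In fact the target estimate $\widehat{\deg}_k(F^n)\le c_2\max\{\deg_k(f^n),\deg_{k-1}(f^n)\}$ is strictly stronger than the theorem, which concerns only $n$-th-root limits and tolerates subexponential corrections. The paper sidesteps all of this: rather than bounding archimedean integrals directly, it proves a Siu-type comparison (Lemma~\ref{Siu4}) of the form $g^*\overline{\mathcal L}^k\le(d+2-k)^k\frac{(L^k\cdot M^{d-k})}{(M^d)}\,g^*\overline{\mathcal M}(t_0)^k$ valid uniformly over all generically finite $g$, deduces the recursive inequality $\widehat{\deg}_k(F\circ G)\le C_1\deg_k(f)\bigl(\widehat{\deg}_k(G)+kt_0\deg_{k-1}(g)\bigr)$, packages it with geometric submultiplicativity into a $2\times 2$ upper-triangular matrix recursion with eigenvalues $C_1\deg_k(f)$ and $C_2\deg_{k-1}(f)$, and concludes by taking $n\to\infty$ and then replacing $f$ by $f^r$ with $r\to\infty$.
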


The proof of our relative degree theorem relies on a technique involving twisted metrics, which is encapsulated in the following formula:
\[		\big(\ov{\mc{L}}^k\cdot\ov{\mc{M}}(t)^{d+1-k}\big)=\big(\ov{\mc{L}}^k\cdot\ov{\mc{M}}^{d+1-k}\big)+(d+1-k)t\big(L^k\cdot M^{d-k}\big).\]
This formula enables us to establish connections between arithmetic degrees and algebraic degrees. Using this approach, we provide a more uniform proof for both the lower and upper bounds of $\alpha_k(f)$. In particular, our proof of the lower bound $\alpha_k(f)\geq \max\{\lambda_k(f),\lambda_{k-1}(f)\}$ is more straightforward than the argument presented in \cite[Theorem B]{dang_higher_2022}. It is worth noting that Theorem \ref{RelaDeg} serves as an arithmetic analogue of the classical relative degree formula \cite{dinh_comparison_2011}, but only in the specific case where the base scheme is a curve, and the rational self-map on the base is the identity. Furthermore, our strategy differs from the one in algebraic case.

For higher arithmetic degrees, one may ask the following question: Is there an upper bound for $\alpha_{k+1}(f,V)$ in terms of dynamical degrees? The naive analogue $\alpha_{k+1}(f,V)\leq \lambda_{k+1}(f)$ does not hold, as shown in \cite[Example 1.1]{dang_higher_2022}. The following result gives a suitable generalization. 
\begin{thm}[Theorem \ref{UpBdd2}]\label{UpBdd}
	Let $f: X\dashrightarrow X$ be a dominant rational map, and let $V$ be an irreducible subvariety of dimension $k$. Then
	\[\overline{\alpha}_{k+1}(f,V)\leq \alpha_{k+1}(f).\]
\end{thm}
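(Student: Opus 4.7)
The strategy is to dominate, uniformly in $n$, the strict-transform cycle $\pi_1^{\#}\mc{V}$ on $\Gamma_{F^n}$ by a fixed multiple of the cycle $(\pi_1^*\ov{\mc{H}})^{d-k}$, then pair with the nef class $\pi_2^*\ov{\mc{H}}^{k+1}$. In this way, the comparison of arithmetic degrees reduces to a cycle inequality on $\mc{X}$ that transfers to each graph with a multiplicative constant independent of $n$.

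I would first work on $\mc{X}$. Since $\ov{\mc{H}}$ is ample, for $m$ sufficiently large the twisted sheaf $m\mc{H}\otimes\mc{I}_{\mc{V}}$ is globally generated, so I can choose $d-k$ global sections $s_1,\ldots,s_{d-k}$ of $m\mc{H}$ vanishing on $\mc{V}$ whose divisors $D_i=\di(s_i)$ meet properly, i.e.\ $D_1\cap\cdots\cap D_{d-k}$ has pure codimension $d-k$. Then $\mc{V}$ occurs as a component of the scheme-theoretic intersection with some positive multiplicity $m_{\mc{V}}\geq 1$, yielding the cycle identity
\[ D_1\cdots D_{d-k} \;=\; m_{\mc{V}}[\mc{V}] \;+\; [R]\]
with $R$ effective, while the left-hand side represents $m^{d-k}\mc{H}^{d-k}$ numerically.

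I would then transfer this to $\Gamma_{F^n}$: each pullback $\pi_1^*D_i$ is an effective Cartier divisor whose support contains $\pi_1^{\#}\mc{V}$, and working locally at the generic point of $\pi_1^{\#}\mc{V}$, where $\pi_1$ restricts to an isomorphism onto an open neighborhood of the generic point of $\mc{V}$, the local intersection multiplicity of the $\pi_1^*D_i$ along $\pi_1^{\#}\mc{V}$ is at least $m_{\mc{V}}$. Hence the effective-cycle inequality
\[ m^{d-k}(\pi_1^*\ov{\mc{H}})^{d-k} \;\geq\; m_{\mc{V}}\,\pi_1^{\#}\mc{V}\]
holds on $\Gamma_{F^n}$, with an effective residual cycle. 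Intersecting both sides against $\pi_2^*\ov{\mc{H}}^{k+1}$---nef because $\ov{\mc{H}}$ is ample and nefness is preserved by pullback---and using the non-negativity of arithmetic intersections of nef Hermitian line bundles with effective cycles, I obtain
\[ m^{d-k}\,\widehat{\deg}_{k+1}(F^n) \;\geq\; m_{\mc{V}}\,\widehat{\deg}_{k+1}(F^n,\mc{V}).\]

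Setting $C=m^{d-k}/m_{\mc{V}}$, a constant independent of $n$, and taking $n$-th roots and $\limsup$ gives
\[ \ov{\alpha}_{k+1}(f,V) \;\leq\; \limsup_{n\ra\infty}\bigl(C\cdot\widehat{\deg}_{k+1}(F^n)\bigr)^{1/n} \;=\; \alpha_{k+1}(f),\]
where the right-hand limit exists by Theorem~\ref{Submulti}. The main obstacle will be making the displayed cycle inequality on $\Gamma_{F^n}$ fully rigorous on a possibly singular normal variety; the cleanest route is to pass via de Jong's alteration to a regular model $\wt{\Gamma}_{F^n}$ dominating $\Gamma_{F^n}$, carry out Fulton's refined intersection there---where the strict transform is a well-controlled component of the scheme-theoretic intersection---and descend via the projection formula, absorbing any extra numerical factor into the $n$-independent constant $C$.
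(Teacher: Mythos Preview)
Your overall strategy coincides with the paper's: realize $\mc{V}$ as an irreducible component of a complete intersection of $d-k$ divisors coming from sections of a multiple of $\mc{H}$, pull everything back to $\Gamma_{F^n}$ via $\pi_1$, and compare the two arithmetic intersection numbers. The paper packages the comparison step as Lemma~\ref{LemPos}.

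There is, however, a genuine gap at the point where you pass from the effective-cycle inequality to the arithmetic-intersection inequality. You write
\[
m^{d-k}(\pi_1^*\ov{\mc{H}})^{d-k}\ \geq\ m_{\mc{V}}\,\pi_1^{\#}\mc{V}
\]
and justify it by ``effective residual cycle''. But in Arakelov theory the arithmetic class $\ov{\mc{H}}^{d-k}$ is \emph{not} represented by the schematic cycle $D_1\cdots D_{d-k}$ alone: each time you peel off a divisor you pick up an archimedean term
\[
\big(\ov{\mc{H}}_1\cdots\ov{\mc{H}}_{k+1}\cdot\ov{\mc{L}}\cdot\mc{W}\big)
=\big(\ov{\mc{H}}_1\cdots\ov{\mc{H}}_{k+1}\cdot\di(s)\cdot\mc{W}\big)
-\int_{\mc{W}(\mb{C})}\log\|s\|\,c_1(\ov{\mc{H}}_1)\cdots c_1(\ov{\mc{H}}_{k+1}),
\]
and that integral has the required sign only if $\|s\|_{\sup}\leq 1$, i.e.\ if $s$ is a \emph{small} section. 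Effectivity of the residual cycle handles only the first summand. Without smallness the displayed inequality can fail. The paper fixes this in one line: after lifting the $s_i$ to $\wt{s}_i\in H^0(\mc{X},\mc{H})$, it replaces $\ov{\mc{H}}$ by $\ov{\mc{H}}(c)$ for $c$ large enough that all $\wt{s}_i$ become small. This is precisely the hypothesis that drives the induction in Lemma~\ref{LemPos}. Once you insert that step, your argument goes through with $C$ absorbed into the choice of $\ov{\mc{H}}$ (so in fact $C=1$, as in the paper).

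Your closing worry about singularities and the suggestion to invoke de Jong alterations is unnecessary: the inductive formula above, and hence Lemma~\ref{LemPos}, is valid on an arbitrary arithmetic variety; no regularity of $\Gamma_{F^n}$ is used. The only subtlety you actually need to address is the archimedean one above.
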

Combining Theorem \ref{RelaDeg} and Theorem \ref{UpBdd}, we obtain an upper bound.
	
When $k=0$, $V$ is a point $P$. We have $\alpha_1(f)=\lambda_1(f)$ and $\ov{\alpha}_1(f,P)=\alpha_f(P)$, which is the classical arithmetic degree. In this case, Theorem \ref{UpBdd} is the fundamental inequality proved in \cite{kawaguchi_dynamical_2016-1} and \cite{matsuzawa_upper_2020}. In other words, we provide a new proof of the fundamental inequality, which is more conceptual and straightforward.

In \cite{dang_higher_2022}, the authors proposed a higher dimensional analogue of Kawaguchi-Silverman conjecture, as stated in Conjecture \ref{Kawa-Sil}. However, this conjecture is false, and we will give a counterexample at the end of this paper. 

\subsection{Organization of the paper}
In Section \ref{SecSiu}, we establish a high-codimensional version of Yuan's inequality, as stated in Theorem \ref{Siu2}. The inequality is applied to prove Theorem \ref{Submulti2}, which guarantees the existence of $\alpha_k(f)$.

In Section \ref{SecRel}, we make use of Theorem \ref{Siu3} to prove the relative degree formula, as stated in Theorem \ref{RelaDeg2}.

In Section \ref{SecUpBd}, we discuss some fundamental properties of the arithmetic degrees of subvarieties. At the end of this section, we prove Theorem \ref{UpBdd2}.

In Section \ref{SecFur}, we discuss the generalized version of the Kawaguchi-Silverman conjecture proposed in \cite[Conjecture 1.6]{dang_higher_2022} and provide a counterexample to this conjecture.

\subsection{Notation and Terminology}\label{Notation}

\textbf{1. }In this paper, a \emph{variety} means an integral separated scheme of finite type over a field. A \emph{subvariety} of a variety $X$ is a closed subset of $X$.  

It should be noted that we do not require a variety to be geometrically integral. Thus a variety over a number field $K$ can be considered as a variety over $\mb{Q}$ via the morphism
\[X\ra \Spe K\ra \Spe\mb{Q}.\]
Therefore, we assume that the variety $X$ is defined over $\mb{Q}$ in this paper. Specifically, $X$ will always be a normal projective variety of dimension $d$ defined over $\mb{Q}$. 

\textbf{2. }Let $f:X\dashrightarrow X$ be a dominant rational map. The set $I(f)$ denotes the indeterminacy locus of $f$. For an irreducible subvariety $V$, we define $f(V)=\overline{f(V\backslash I(f))}$, which is the strict transform. The $f$-orbit of $V$ is well-defined if $f^n(V)$ is not contained in $I(f)$ for any $n>0$.

\textbf{3. }For the arithmetic intersection theory, we follow definitions in \cite{gillet_arithmetic_1990}. 

 By an \emph{arithmetic variety} $\pi: \mc{X}\ra \Spe\mb{Z}$, we mean an integral scheme, flat, separated and of finite type over $\Spe \mb{Z}$. A \emph{Hermitian line bundle} on $\mc{X}$ is the datum $\ov{\mc{L}}=(\mc{L},\|\cdot\|)$ where
 \begin{itemize}
 	\item   $\mathcal{L}$ is a line bundle on $\mathcal{X}$;
 	
 \item  $\|\cdot\|$ is a smooth Hermitian metric on the line bundle $\mathcal{L}(\mathbb{C})$ on $\mathcal{X}(\mathbb{C})$ that is smooth and invariant under complex conjugation.
 \end{itemize}
For a Hermitian line bundle $\ov{\mc{L}}=(\mc{L},\|\cdot\|)$ on $\mc{X}$, we use $L$ to denote its restriction to $X=\mc{X}_{\mb{Q}}$, i.e. $L=\mc{L}_{\mb{Q}}$. For a real number $t$, we denote the Hermitian line bundle $\ov{\mc{L}}(t)=(\mc{L},\|\cdot\|e^{-t})$.

\textbf{4.} For a variety $X$ over a number field $K$ and a line bundle $L$ on $X$, we say that $(\mc{X,\ov{\mc{L}}})$ is an arithmetic model of $(X,L)$ if $\mc{X}$ is an integral scheme, $\mc{X}\ra \Spe O_K$ is a flat projective morphism with generic fiber $\mc{X}\times_{O_K}K\cong X$, and $\ov{\mc{L}}=(\mc{L},\|\cdot\|)$ is a Hermitian line bundle on $\mc{X}$ extending $L$ under the identification $\mc{X}_K\cong X$. We also say $\mc{X}$ is an \emph{integral model} of $X$. Note that the integral model $\mc{X}$ is an arithmetic variety because of the map $\mc{X}\ra \Spe O_K\ra \Spe \mb{Z}$.

\textbf{5.} In this article, we will make use of various positivity conditions on Hermitian line bundles, such as ``ample," ``nef," and ``big." For a comprehensive discussion on the positivity theory of Hermitian line bundles, we refer the reader to \cite{zhang_positive_1995} and \cite{moriwaki_arakelov_2014}. The \emph{small} sections of a Hermitian line bundle $\ov{\mc{L}}$ is the set
\[\widehat{H}^0(\mc{X},\ov{\mc{L}})=\{s\in H^0(\mc{X},\mc{L}): \|s\|_{\mr{sup}}\leq 1\},\]
where $\|s\|_{\mr{sup}}=\sup\limits_{z\in \mc{X}(\mb{C})}\|s(z)\|$. A Hermitian line bundle is called \emph{effective} if $\widehat{H}^0(\mc{X},\ov{\mc{L}})$ is nonzero.

\textbf{6.} On an arithmetic variety $\mc{X}\ra\Spe\mb{Z}$, for $\ov{\mc{Z}}\in \widehat{\mr{CH}}^k(\mc{X})$, we say $\ov{\mc{Z}}\geq 0$ if for any nef Hermitian line bundles $\ov{\mc{H}}_1,\ldots,\ov{\mc{H}}_{d+1-k}$, the arithmetic intersection number
\[\big(\ov{\mc{H}}_1\cdots\ov{\mc{H}}_{d+1-k}\cdot \ov{\mc{Z}}\big)\geq 0.\]

\subsection{Acknowledgements}
I will express great gratitude to Junyi Xie for encouraging me to work on this problem and giving me lots of helpful advice. I am grateful to Xinyi Yuan who suggested using the arithmetic Demailly approximation to tackle this problem.  Additionally, I would like to acknowledge Chunhui Liu for meticulously examining the earlier version of this article and Shu Kawaguchi, John Lesieutre, Yohsuke Matsuzawa, Joseph Silverman for providing valuable suggestions. I would like to thank Shou-wu Zhang for answering my questions and my classmate Ruoyi Guo for some useful discussions about Arakelov geometry.

\section{The high-codimensional Yuan's inequality}\label{SecSiu}	
In \cite{dang_degrees_2020}, Dang proves the following result:
\begin{thm}[\cite{dang_degrees_2020}, Theorem 3.4.4]\label{Dangresult}
	Let $X$ be a normal projective variety of dimension $d$ over an algebraically closed field and $1\leq i\leq d$ be an integer. Then for any nef line bundles $\alpha_1, \ldots, \alpha_i$ and  big and nef line bundle $\beta$ on $X$, one has  
	$$
	\alpha_1\cdot\alpha_2\cdots \alpha_i \leqslant(d-i+1)^i \frac{\left(\alpha_1\cdots \alpha_i \cdot \beta^{d-i}\right)}{\left(\beta^d\right)} \times \beta^i .
	$$
	Here we write $\alpha\leq \beta$ for $\alpha,\beta\in N^i(X)$ if $\beta-\alpha$ is pseudo effective.
\end{thm}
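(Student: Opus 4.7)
The plan is to prove this by iterating a \emph{relative} form of the classical codimension-$1$ Siu bigness inequality exactly $i$ times. First I would dispatch the base case $i=1$, namely the numerical form of Siu's inequality
$\alpha \leq d\cdot \frac{(\alpha\cdot \beta^{d-1})}{(\beta^d)}\,\beta$ in $\overline{\mathrm{Eff}}^1(X)_{\mathbb R}$ for nef $\alpha$ and big, nef $\beta$. This is a direct consequence of Siu's bigness theorem: whenever $c>d(\alpha\cdot\beta^{d-1})/(\beta^d)$, the expansion $(c\beta-\alpha)^d\geq c^d(\beta^d)-dc^{d-1}(\beta^{d-1}\cdot\alpha)>0$ forces the nef-difference $c\beta-\alpha$ to be big, hence pseudo-effective; the closedness of the pseudo-effective cone then yields the limiting inequality at the critical value of $c$.

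The key intermediate step is a relative Siu inequality: for nef codim-$1$ classes $\alpha,\beta$ and any nef class $\gamma\in N^{i-1}(X)_{\mathbb R}$ with $(\beta^{d-i+1}\cdot\gamma)>0$, one has
$$\alpha\cdot\gamma \;\leq\; (d-i+1)\cdot\frac{(\alpha\cdot\beta^{d-i}\cdot\gamma)}{(\beta^{d-i+1}\cdot\gamma)}\,\beta\cdot\gamma \quad\text{in } \overline{\mathrm{Eff}}^i(X)_{\mathbb R}.$$
When $\gamma=[Y]$ is the class of an irreducible $(d-i+1)$-dimensional subvariety $Y\subset X$, I would obtain this by applying the base case on $Y$ to $\alpha|_Y$ and $\beta|_Y$ (with $\beta|_Y$ big because $(\beta|_Y)^{\dim Y}=(\beta^{d-i+1}\cdot[Y])>0$) and pushing forward along $Y\hookrightarrow X$, since pushforward preserves pseudo-effectiveness. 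For a general nef $\gamma$, written as a product of nef codim-$1$ classes, I would perturb each factor by a small ample class $\varepsilon\beta$, cut by general sufficiently divisible members of the resulting ample linear systems (using Bertini in characteristic zero, or Seshadri/moving-type arguments in positive characteristic, possibly after an alteration of $X$ to handle singularities of the normal $X$), decompose the resulting cycle into integral $(d-i+1)$-dimensional components, apply the irreducible case componentwise, and pass to the limit $\varepsilon\to 0$ using closedness of $\overline{\mathrm{Eff}}^i(X)_{\mathbb R}$.

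With the relative Siu lemma in hand, I would set $\gamma_j := \alpha_{j+1}\cdots\alpha_i\cdot\beta^{j-1}$ for $1\leq j\leq i$ (with $\gamma_{i+1}=\beta^i$), observing that $\beta\cdot\gamma_j=\alpha_{j+1}\cdot\gamma_{j+1}$. Successive application of the lemma to $\alpha_j$ against $\gamma_j$ for $j=1,\ldots,i$ produces a telescoping product of the ratios $\prod_{j=1}^i \frac{(\alpha_j\cdot\gamma_j\cdot\beta^{d-i})}{(\gamma_j\cdot\beta^{d-i+1})} = \frac{(\alpha_1\cdots\alpha_i\cdot\beta^{d-i})}{(\beta^d)}$, collects a factor of $(d-i+1)^i$ and the terminal class $\beta^i$, and thereby yields the stated inequality $\alpha_1\cdots\alpha_i \leq (d-i+1)^i\cdot \frac{(\alpha_1\cdots\alpha_i\cdot\beta^{d-i})}{(\beta^d)}\cdot\beta^i$.

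The main obstacle will be the approximation step in the relative Siu lemma: realizing an arbitrary nef codim-$(i-1)$ class by effective cycles of the right dimension in a way compatible with numerical intersection theory, in particular dealing with failures of Bertini in positive characteristic and with singularities of $X$ (which is only assumed normal). Once this geometric input is secured, the rest of the argument is a purely formal telescoping computation, and the sharp constant $(d-i+1)^i$ appears precisely because each of the $i$ applications of the lemma contributes the factor $(d-i+1)$ equal to the dimension of the ambient subvariety on which codim-$1$ Siu is applied.
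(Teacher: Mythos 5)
The paper does not prove this theorem; it simply cites it from Dang's article. So the relevant comparison is with Dang's original proof, which the paper follows closely when it adapts the argument to the arithmetic setting in Section~\ref{SecSiu} (Lemma~\ref{Siu1}, Corollary~\ref{Siu1cor}, Theorem~\ref{Siu2}). Your outline reproduces exactly that structure: the base case is the classical Siu bigness inequality in codimension one; the key intermediate step is a ``relative'' Siu inequality proved by cutting with sections and applying the base case on the cut; and the final bound with constant $(d-i+1)^i$ comes from a telescoping iteration. This is, up to reordering, the same argument used by Dang, and it is mirrored precisely by the paper's arithmetic version, where the base case is Yuan's bigness theorem, the cutting is done by arithmetic Demailly approximation, and the same telescoping over $\gamma_j=\alpha_{j+1}\cdots\alpha_i\cdot\beta^{j-1}$ produces the constant. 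The telescoping identity you wrote, $\prod_{j=1}^i \frac{(\alpha_j\cdot\gamma_j\cdot\beta^{d-i})}{(\gamma_j\cdot\beta^{d-i+1})} = \frac{(\alpha_1\cdots\alpha_i\cdot\beta^{d-i})}{(\beta^d)}$, is correct.

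Two small caveats. First, your intermediate lemma is stated for ``any nef class $\gamma\in N^{i-1}(X)_{\mathbb R}$,'' but the approximation you sketch only produces it for $\gamma$ a product of nef divisor classes (or a limit thereof); this is all the telescoping step requires, so you should state the lemma at that level of generality rather than claiming it for arbitrary nef $(i-1)$-cycles. Second, the approximation step -- cutting by general members, decomposing into irreducible components, controlling the limit $\varepsilon\to0$, and ensuring $(\beta^{d-i+1}\cdot[Y_\ell])>0$ on each component that contributes -- is indeed the technically delicate part, especially over a normal (not necessarily smooth) variety in arbitrary characteristic. You correctly identify it as the main obstacle; in Dang's paper this is precisely where the work is concentrated, and in the present paper's arithmetic version the analogous issue is resolved by the Demailly approximation theorem together with a passage to a normal, generically smooth cover. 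As it stands, your write-up is a correct and faithful sketch of the known proof rather than a complete argument.
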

Here we follow the intersection theory in \cite[Section 2.3]{fulton_intersection_1998}.

In the proof of Theorem \ref{Dangresult}, we need to cut the divisors by certain sections of other line bundles. However, a section of a Hermitian line bundle is not only a topological space, but also contains information of the metric.  To overcome this problem, we make use of the following arithmetic Demailly approximation theorem, which allows us to omit the metric part without losing essential properties.
\begin{thm}[\cite{qu_arithmetic_2024}, Theorem 1.5]\label{ArithDe}
	Let $\mathcal{X}\ra\Spe \mb{Z}$ be an arithmetic variety, normal and generically smooth. Let $\overline{\mathcal{L}}$ be an ample Hermitian line bundle. Let $Y$ be a closed subset of $\mathcal{X}_\mathbb{Q}$.
	Then there exists $s_{n_i} \in H^0(\mathcal{X},n_i\mathcal{L})$ such that
	\begin{itemize}
		\item $\|s_{n_i}\|_\infty \rightarrow 1$ and $\frac1{n_i} \log\|s_{n_i}\| \rightarrow 0$ in $L^1$-topology.
		\item $\operatorname{div}(s_{n_i})_\mathbb{Q}$ is smooth and does not contain any irreducible components of $Y$.
		\item $\operatorname{div}(s_{n_i})$ has no vertical components.
	\end{itemize}
\end{thm}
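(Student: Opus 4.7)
The plan is to combine three ingredients: (i) the arithmetic Hilbert--Samuel asymptotics for an ample Hermitian line bundle, which supply an abundance of small integral sections; (ii) an arithmetic Bertini argument enforcing smoothness of $\operatorname{div}(s_{n_i})_\mathbb{Q}$ and transversality to the irreducible components of $Y$; and (iii) a clearing step that removes vertical components, followed by Yuan's arithmetic equidistribution to upgrade trivial pointwise bounds on $\log\|s_{n_i}\|$ to $L^1$ convergence.

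First, by the arithmetic Hilbert--Samuel theorem applied to the ample Hermitian line bundle $\overline{\mathcal{L}}$, one has
\[
\widehat{h}^0(\mathcal{X},\,n\overline{\mathcal{L}}) = \frac{\overline{\mathcal{L}}^{d+1}}{(d+1)!}\, n^{d+1} + o(n^{d+1}),
\]
so the lattice of small sections $\widehat{H}^0(\mathcal{X},n\overline{\mathcal{L}})$ is exponentially large for $n\gg 0$, and ampleness of $\overline{\mathcal{L}}$ forces the base locus of the linear system spanned by small sections to be empty on $\mathcal{X}_\mathbb{Q}$ for $n$ large. Inside this finite-dimensional family, an arithmetic Bertini argument (in the style of Moriwaki or Charles) shows that a generic small section $s_n$ has $\operatorname{div}(s_n)_\mathbb{Q}$ smooth and meeting each irreducible component of $Y$ properly, and in particular containing none of them. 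If $\operatorname{div}(s_n)$ contains a whole fiber over some prime $p$, then $s_n$ is divisible by $p$ in the $\mathbb{Z}$-module $H^0(\mathcal{X},n\mathcal{L})$; replacing $s_n$ by $s_n/p^{a_n}$ for the maximal admissible $a_n$ and iterating over the finitely many relevant primes yields a section whose divisor has no vertical component. The cumulative scaling $\prod_p p^{a_{n,p}}$ introduced into $\|s_n\|_\infty$ is of multiplicative order $e^{o(n)}$, controlled by the horizontal--vertical decomposition of $\widehat{c}_1(\overline{\mathcal{L}})^{d+1}$ and the fact that ample vertical contributions cannot compete with the dominant horizontal term.

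Finally, I would control the metric data. The upper bound $\limsup_i\|s_{n_i}\|_\infty\leq 1$ follows from the starting bound $\|s_n\|_\infty\leq 1$ combined with the $o(n)$ estimate on the vertical correction; saturation $\|s_{n_i}\|_\infty\to 1$ along a suitable subsequence is forced by the Hilbert--Samuel asymptotics, for otherwise a uniform contraction of the sup-norm would violate the exponential growth of $\widehat{H}^0$. The $L^1$-convergence $\tfrac{1}{n_i}\log\|s_{n_i}\|\to 0$ is the analytic heart of the theorem: Yuan's arithmetic equidistribution gives the weak convergence $\tfrac{1}{n_i}[\operatorname{div}(s_{n_i})]\to c_1(\overline{\mathcal{L}})$, and Poincar\'e--Lelong converts this into weak convergence $\tfrac{1}{n_i}\, dd^c\log\|s_{n_i}\|\to 0$; combined with the pointwise upper bound this upgrades to $L^1$ convergence. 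The main obstacle is precisely this last step: the pointwise upper bound on $\tfrac{1}{n_i}\log\|s_{n_i}\|$ is routine, but matching it with an $L^1$ lower bound requires the full equidistribution theory together with careful accounting of the vertical corrections $p^{a_{n,p}}$ introduced in the clearing step, since those corrections affect the sup-norm but not the horizontal divisor that drives the equidistribution.
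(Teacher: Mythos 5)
This statement is quoted from Qu's paper (\cite{qu_arithmetic_2022}, Theorem 1.5); the present article cites it and does not reprove it, so there is no in-paper proof to compare against. Evaluating your sketch on its own merits, the high-level plan (Hilbert--Samuel for abundance, an arithmetic Bertini for the geometric conditions, a vertical-clearing step, and an equidistribution/Poincar\'e--Lelong argument for $L^1$ convergence) is a reasonable program, but there are at least two genuine gaps.

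The vertical-clearing step as written does not work. You argue that if $\operatorname{div}(s_n)$ contains a whole fiber over $p$ then $s_n$ is divisible by $p$, and propose to iterate division by $p^{a_{n,p}}$. But the theorem asks that $\operatorname{div}(s_n)$ contain \emph{no} vertical component, and a vertical component over $p$ is typically a single irreducible component of a possibly reducible special fiber; in that case $s_n$ vanishes along one component but is not divisible by $p$ in $H^0(\mathcal{X},n\mathcal{L})$, so the division trick simply does not apply. Moreover, even in the case where full fibers do appear, the claim that $\prod_p p^{a_{n,p}}=e^{o(n)}$ is asserted but not argued; this is not a formal consequence of ampleness and is in fact the delicate quantitative input. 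The usual way around this is not to clean up a bad section after the fact, but to run an arithmetic Bertini argument from the start over $\Spe\mathbb{Z}$ (so that the generic member of a suitable sub-lattice of $\widehat{H}^0$ already avoids all vertical primes and all components of $Y$), which is a stronger statement than the complex-geometric Bertini you invoke.

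The $L^1$ step is also incomplete. You assert weak convergence $\tfrac{1}{n_i}[\operatorname{div}(s_{n_i})]\to c_1(\overline{\mathcal{L}})$ as a consequence of ``Yuan's arithmetic equidistribution,'' but Yuan's equidistribution theorem concerns Galois orbits of small points under a height-minimizing hypothesis; it is not a statement about generic small sections, and it is not automatic that the sections produced by your Bertini/clearing steps satisfy the hypotheses of any equidistribution theorem. Even granting the weak convergence of $dd^c u_{n_i}$ with $u_{n_i}=\tfrac{1}{n_i}\log\|s_{n_i}\|$, passing from this plus the upper bound $\sup u_{n_i}\to 0$ to $u_{n_i}\to 0$ in $L^1$ requires the compactness theory of $\omega$-plurisubharmonic functions and an identification of the limit (e.g.\ that the $L^1$ limit is $\omega$-harmonic hence constant, and the constant is $0$); none of this is spelled out, and it is exactly the ``analytic heart'' you flag but do not close. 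As it stands the sketch identifies the right ingredients but does not constitute a proof of any of the three bullet points in the statement.
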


\hspace*{\fill} \\

The following is our key lemma.

\begin{lem}\label{Siu1}
	Let $\mathcal{X}\ra \Spe\mb{Z}$ be an arithmetic variety with relative dimension $d$, and $0\leq r\leq d-1$ be an integer. Let $\ov{\mc{L}},\ov{\mc{M}}$ and $\ov{\mc{N}}_i$, $i=1,2,\ldots,r$ be nef Hermitian line bundles such that 
	
	\[\big(\ov{\mc{M}}^{d+1-r}\cdot \ov{\mc{N}}_1\cdots \ov{\mc{N}}_r\big)>(d+1-r)\cdot \big(\ov{\mc{M}}^{d-r}\cdot\ov{\mc{L}}\cdot \ov{\mc{N}}_1\cdots \ov{\mc{N}}_r\big).\] Then there is an inequality 
	\[
	\ov{\mc{L}}\cdot \ov{\mc{N}}_1\cdots \ov{\mc{N}}_r\leq \ov{\mc{M}}\cdot \ov{\mc{N}}_1\cdots \ov{\mc{N}}_r.
	\]
	Here for $\ov{\mc{Z}}\in \widehat{\mr{CH}}^k(\mc{X})$, we say $\ov{\mc{Z}}\geq 0$ if for any nef Hermitian line bundles $\ov{\mc{H}}_1,\ldots,\ov{\mc{H}}_{d+1-k}$, the arithmetic intersection number
	\[\big(\ov{\mc{H}}_1\cdots\ov{\mc{H}}_{d+1-k}\cdot \ov{\mc{Z}}\big)\geq 0.\]

For $r=0$, the inequality means $\ov{\mc{L}}\leq \ov{\mc{M}}$.
\end{lem}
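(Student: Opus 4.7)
My plan is to reduce to Yuan's codimension-one arithmetic Siu's inequality from \cite{yuan_big_2008} after ``cutting down'' by sections of the $\ov{\mc{N}}_i$'s provided by the arithmetic Demailly approximation in Theorem \ref{ArithDe}. Since Theorem \ref{ArithDe} applies to ample Hermitian line bundles, I would first perturb $\ov{\mc{N}}_i\mapsto\ov{\mc{N}}_i+\epsilon\ov{\mc{A}}$ for a fixed ample Hermitian line bundle $\ov{\mc{A}}$ and a small $\epsilon>0$. The strict hypothesis is stable under such a perturbation by continuity of arithmetic intersection numbers, and both sides of the desired inequality depend continuously and multilinearly on the $\ov{\mc{N}}_i$'s, so it suffices to handle the ample case and then let $\epsilon\to 0$.

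Assuming the $\ov{\mc{N}}_i$ are ample, I apply Theorem \ref{ArithDe} iteratively to obtain, for a common large integer $n$, sections $s_i\in H^0(\mc{X},n\mc{N}_i)$ with $\|s_i\|_\infty\to 1$ and $\frac{1}{n}\log\|s_i\|\to 0$ in $L^1$-topology, arranged so that $\mr{div}(s_i)_\mb{Q}$ has no vertical components and meets the intersection of the previously built divisors properly on the generic fiber. Let $\mc{Y}$ be the resulting arithmetic subscheme, of relative dimension $d-r$. Using $n\,\widehat{c}_1(\ov{\mc{N}}_i)=\widehat{\mr{div}}(s_i)$ in $\widehat{\mr{CH}}^1(\mc{X})$, every arithmetic intersection on $\mc{X}$ against $\prod_i\ov{\mc{N}}_i$ reduces, up to archimedean correction integrals involving $\frac{1}{n}\log\|s_i\|$ (which vanish in the limit by $L^1$-convergence), to $\frac{1}{n^r}$ times the analogous intersection on $\mc{Y}$. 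In particular, the hypothesis translates for $n$ sufficiently large into $(\ov{\mc{M}}|_\mc{Y})^{d+1-r}>(d+1-r)(\ov{\mc{M}}|_\mc{Y})^{d-r}\cdot\ov{\mc{L}}|_\mc{Y}$ on $\mc{Y}$, which is precisely the input of Yuan's arithmetic Siu inequality in relative dimension $d-r$. The latter yields bigness of $\ov{\mc{M}}|_\mc{Y}-\ov{\mc{L}}|_\mc{Y}$, and since a positive power then admits a small section (whose associated arithmetic divisor is effective), its arithmetic intersection with any tuple of nef classes $\ov{\mc{H}}_1|_\mc{Y},\ldots,\ov{\mc{H}}_{d-r}|_\mc{Y}$ is non-negative. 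Translating this inequality back through the same Demailly identification, dividing by $n^r$ and letting $n\to\infty$ (and finally $\epsilon\to 0$) yields $\ov{\mc{L}}\cdot\prod_i\ov{\mc{N}}_i\leq\ov{\mc{M}}\cdot\prod_i\ov{\mc{N}}_i$.

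The main technical obstacle will be rigorously controlling the archimedean corrections when each $\widehat{c}_1(\ov{\mc{N}}_i)$ is replaced by $\frac{1}{n}\widehat{\mr{div}}(s_i)$: expanding an arithmetic intersection produces a cascade of Green-current contributions of the form $\frac{1}{n}\int\log\|s_i\|\cdot(\text{smooth form built from curvatures of the remaining bundles})$, and one must verify that each vanishes in the limit using only the $L^1$-smallness of $\frac{1}{n}\log\|s_i\|$ against the correct curvature forms. A secondary technical point is that $\mc{Y}$ need not be regular (only generically smooth), so either one base-changes through a de Jong-style alteration of $\mc{Y}$ or, equivalently, phrases the entire argument on $\mc{X}$ using only the arithmetic cycles $\widehat{\mr{div}}(s_i)$ and the small sections of positive multiples of $\ov{\mc{M}}-\ov{\mc{L}}$ produced by Yuan's theorem, avoiding any literal restriction to $\mc{Y}$.
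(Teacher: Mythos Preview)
Your proposal is correct and uses the same two ingredients as the paper---Yuan's arithmetic Siu inequality for the base case, and the arithmetic Demailly approximation (Theorem \ref{ArithDe}) to cut down by the $\ov{\mc{N}}_i$'s---but the paper organizes the argument differently in a way that dissolves both of the technical obstacles you flag. Rather than cutting by all $r$ sections at once, the paper runs an induction on $r$: after reducing (via a generically finite cover) to $\mc{X}$ normal and generically smooth, and perturbing all bundles to be ample, it applies Theorem \ref{ArithDe} to $\ov{\mc{N}}_1$ alone, obtaining a single section $s_{n_i}$ whose divisor $\mc{Y}=\di(s_{n_i})$ is horizontal and irreducible with smooth generic fiber. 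One then checks that the strict hypothesis descends to $\mc{Y}$ (with $\ov{\mc{N}}_2|_{\mc{Y}},\dots,\ov{\mc{N}}_r|_{\mc{Y}}$) up to a single archimedean correction $\frac{1}{n_i}\int_{\mc{X}(\mb{C})}\log\|s_{n_i}\|\cdot(\text{curvature form})$, which vanishes in the limit; the induction hypothesis on $\mc{Y}$ then gives the desired inequality after one more passage to the limit. This one-step-at-a-time scheme means (i) only a single Green-current term appears at each stage, so there is no ``cascade'' to control, and (ii) $\mc{Y}$ is automatically an honest arithmetic variety, so no alteration or ad hoc reformulation is needed before invoking the inductive hypothesis (whose $r=0$ case is exactly Yuan's theorem). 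Your all-at-once version works, but you would have to carry out the bookkeeping you already identify; the paper's induction is the cleaner packaging. One small omission in your outline: Theorem \ref{ArithDe} requires $\mc{X}$ normal and generically smooth, so you should first pass to such a model via a generically finite map, as the paper does.
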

\begin{proof}
We prove this inequality by induction on $r$. When $r=0$, we have to prove $\ov{\mc{L}}\leq \ov{\mc{M}}$
	under the assumption
	\[\big(\ov{\mc{M}}^{d+1}\big)>(d+1)\big(\ov{\mc{M}}^d\cdot \ov{\mc{L}}\big).\]
	By \cite[Theorem 2.2]{yuan_big_2008}, $\ov{\mc{M}}-\ov{\mc{L}}$ is a big Hermitian line bundle, which implies
	\[\big(\ov{\mc{H}}_1\cdots\ov{\mc{H}}_{d}\cdot \ov{\mc{M}}\big)\geq \big(\ov{\mc{H}}_1\cdots\ov{\mc{H}}_{d}\cdot \ov{\mc{L}}\big)\]
	for any nef Hermitian line bundles $\ov{\mc{H}}_1,\ldots,\ov{\mc{H}}_{d}$.
	
	Assume $r\geq 1$ and the conclusion holds for $r-1$. Additionally, to prove the inequality, we can assume $\mc{X}$ is normal and generically smooth. Indeed, we take a dominant generically finite morphism $\pi:\mc{X}'\ra \mc{X}$ with $\mc{X}'$ normal and generically smooth. Since $\mc{X}^{\prime}$ integral and dominates $ \Spe\mb{Z}$, the morphism $\mc{X}^{\prime}\ra \Spe\mb{Z}$ is flat by \cite[Proposition 3.9.7]{hartshorne_algebraic_1977}. As a result, $\mc{X}^{\prime}$ is still an arithmetic variety and we can replace $\mc{X}$ by $\mc{X}^{\prime}$.
	
	 Denote the topological degree of $\pi$ by $\delta$. Then by the projection formula (See \cite[Proposition 2.3.1(iv)]{bost_heights_1994}),
	\begin{equation}
	\begin{aligned}
		&\big(\pi^*\ov{\mc{M}}^{d+1-r}\cdot \pi^*\ov{\mc{N}}_1\cdots \pi^*\ov{\mc{N}}_r\big)-(d+1-r)\cdot \big(\pi^*\ov{\mc{M}}^{d-r}\cdot\pi^*\ov{\mc{L}}\cdot \pi^*\ov{\mc{N}}_1\cdots \pi^*\ov{\mc{N}}_r\big)\\
		=& \delta\cdot \big(\big(\ov{\mc{M}}^{d+1-r}\cdot \ov{\mc{N}}_1\cdots \ov{\mc{N}}_r\big)-(d+1-r)\cdot \big(\ov{\mc{M}}^{d-r}\cdot\ov{\mc{L}}\cdot \ov{\mc{N}}_1\cdots \ov{\mc{N}}_r\big)\big)	> 0.
	\end{aligned}
	\end{equation}
	If the result holds on $\mc{X}'$, then
	\[\big(\pi^*\ov{\mc{H}}_1\cdots\pi^*\ov{\mc{H}}_{d-r}\cdot \pi^*\ov{\mc{M}}\cdot \pi^*\ov{\mc{N}}_1\cdots \pi^*\ov{\mc{N}}_r\big)\geq \big(\pi^*\ov{\mc{H}}_1\cdots\pi^*\ov{\mc{H}}_{d-r}\cdot \pi^*\ov{\mc{L}}\cdot \pi^*\ov{\mc{N}}_1\cdots \pi^*\ov{\mc{N}}_r\big),\]
	which implies
	\[\big(\ov{\mc{H}}_1\cdots\ov{\mc{H}}_{d-r}\cdot \ov{\mc{M}}\cdot \ov{\mc{N}}_1\cdots \ov{\mc{N}}_r\big)\geq \big(\ov{\mc{H}}_1\cdots\ov{\mc{H}}_{d-r}\cdot \ov{\mc{L}}\cdot \ov{\mc{N}}_1\cdots \ov{\mc{N}}_r\big).\]
	
	Moreover, since nef Hermitian line bundles can be approximated by ample line bundles, we can assume all these line bundles are ample. (The approximation means that take an ample Hermitian line bundle $\ov{\mc{H}}$, replace the nef Hermitian line bundle $\ov{\mc{L}}$ by $\ov{\mc{L}}+\varepsilon\ov{\mc{H}}$, and take $\varepsilon\ra 0$.)
	
	By Theorem \ref{ArithDe}, there exists a sequence of integral sections $(s_{n_i})_{n_i\in \mb{N}}$, $s_{n_i}\in H^0(\mc{X},n_i\mc{N}_1)$ satisfying
	\begin{itemize}
		\item  $\frac{1}{n_i}\log\|s_{n_i}\|\ra 0$ in $L^1$-topology on $\mc{X}(\mb{C})$,
		\item $\di(s_{n_i})_{\mb{Q}}$ smooth,
		 \item $\di(s_{n_i})$ has no vertical components.
	\end{itemize}
 This implies $\di(s_{n_i})$ is horizontal and irreducible. We denote $\mc{Y}=\di(s_{n_i})$, which is an arithmetic variety of dimension $d+1-r$ with smooth generic fiber.  Now we compute the intersection numbers.
 \begin{equation}
 	\begin{aligned}
 		\big(\ov{\mc{M}}^{d+1-r}\cdot \ov{\mc{N}}_1\cdots \ov{\mc{N}}_r\big)=& \frac{1}{n_i}\big(\ov{\mc{M}}^{d+1-r}\cdot \ov{\mc{N}}_2\cdots\ov{\mc{N}}_r\cdot\mc{Y}\big)\\
 		&-\frac{1}{n_i}\int_{\mc{X}(\mb{C})}\log\|s_{n_i}\|c_1(\ov{\mc{M}})^{d+1-r}c_1(\ov{\mc{N}}_2)\cdots c_1(\ov{\mc{N}}_r)
 	\end{aligned}
 \end{equation}
 and 
 \begin{equation}
 	\begin{aligned}
 		\big(\ov{\mc{M}}^{d-r}\cdot\ov{\mc{L}}\cdot \ov{\mc{N}}_1\cdots \ov{\mc{N}}_r\big)=& \frac{1}{n_i}\big(\ov{\mc{M}}^{d-r}\cdot\ov{\mc{L}}\cdot\ov{\mc{N}}_2\cdots\ov{\mc{N}}_r \cdot\mc{Y}\big)\\
 		&-\frac{1}{n_i}\int_{\mc{X}(\mb{C})}\log\|s_{n_i}\|c_1(\ov{\mc{M}})^{d-r}c_1(\ov{\mc{L}})c_1(\ov{\mc{N}}_2)\cdots c_1(\ov{\mc{N}}_r).
 	\end{aligned}
 \end{equation}
 Since $\frac{\log\|s_{n_i}\|}{n_i}\ra 0$ in $L^1$-topology and these Chern forms are smooth, we have
 \[\lim_{i\ra \infty}\frac{1}{n_i}\int_{\mc{X}(\mb{C})}\log\|s_{n_i}\|c_1(\ov{\mc{M}})^{d+1-r}c_1(\ov{\mc{N}}_2)\cdots c_1(\ov{\mc{N}}_r)=0,\]
  \[\lim_{i\ra \infty}\frac{1}{n_i}\int_{\mc{X}(\mb{C})}\log\|s_{n_i}\|c_1(\ov{\mc{M}})^{d-r} c_1(\ov{\mc{L}})c_1(\ov{\mc{N}}_2)\cdots c_1(\ov{\mc{N}}_r)=0.\]
 Thus
 \begin{equation}
 	\begin{aligned}
 		&\big(\ov{\mc{M}}^{d+1-r}\cdot \ov{\mc{N}}_1\cdots \ov{\mc{N}}_r\big)-(d+1-r)\cdot \big(\ov{\mc{M}}^{d-r}\cdot\ov{\mc{L}}\cdot \ov{\mc{N}}_1\cdots \ov{\mc{N}}_r\big)\\
 		=&\frac{1}{n_i}\bigg(\big(\ov{\mc{M}}^{d+1-r}\cdot\ov{\mc{N}}_2\cdots\ov{\mc{N}}_r \cdot\mc{Y} \big)-(d+1-r)\big(\ov{\mc{M}}^{d-r}\cdot \ov{\mc{L}}\cdot \ov{\mc{N}}_2\cdots\ov{\mc{N}}_r\cdot \mc{Y}\big)\bigg)+o(1)
 	\end{aligned}
 \end{equation}
 as $n_i \ra \infty$. Taking $n_i$ sufficiently large, we can make 
 \[\big(\ov{\mc{M}}^{d+1-r}\cdot\ov{\mc{N}}_2\cdots\ov{\mc{N}}_r \cdot\mc{Y} \big)>(d+1-r)\big(\ov{\mc{M}}^{d-r}\cdot \ov{\mc{L}}\cdot \ov{\mc{N}}_2\cdots\ov{\mc{N}}_r\cdot \mc{Y}\big).\]
 Then we apply the induction hypothesis for $r-1$ to the arithmetic variety $\mc{Y}=\di(s_{n_i})$ and Hermitian line bundles $\ov{\mc{M}}|_{\mc{Y}}, \ov{\mc{L}}|_{\mc{Y}},\ov{\mc{N}}_2|_{\mc{Y}},\ldots, \ov{\mc{N}}_r|_{\mc{Y}}$. For fixed nef Hermitian line bundles $\ov{\mc{H}}_1,\ldots,\ov{\mc{H}}_{d-r}$, we have
\[\big(\ov{\mc{H}}_1\cdots\ov{\mc{H}}_{d-r}\cdot \ov{\mc{M}}\cdot \ov{\mc{N}}_2\cdots \ov{\mc{N}}_r\cdot \mc{Y}\big)\geq \big(\ov{\mc{H}}_1\cdots\ov{\mc{H}}_{d-r}\cdot \ov{\mc{L}}\cdot \ov{\mc{N}}_2\cdots \ov{\mc{N}}_r\cdot \mc{Y}\big).\]

By a similar computation,
\begin{equation}
	\begin{aligned}
		&\big(\ov{\mc{H}}_1\cdots\ov{\mc{H}}_{d-r}\cdot \ov{\mc{M}}\cdot \ov{\mc{N}}_1\cdots \ov{\mc{N}}_r\big)-\big(\ov{\mc{H}}_1\cdots\ov{\mc{H}}_{d-r}\cdot \ov{\mc{L}}\cdot \ov{\mc{N}}_1\cdots \ov{\mc{N}}_r\big)\\
		=&\frac{1}{n_i}\bigg(\big(\ov{\mc{H}}_1\cdots\ov{\mc{H}}_{d-r}\cdot \ov{\mc{M}}\cdot \ov{\mc{N}}_2\cdots \ov{\mc{N}}_r\cdot\mc{Y}\big)-\big(\ov{\mc{H}}_1\cdots\ov{\mc{H}}_{d-r}\cdot \ov{\mc{L}}\cdot\ov{\mc{N}}_2\cdots \ov{\mc{N}}_r\cdot\mc{Y}\big)\bigg)+o(1)
	\end{aligned}
\end{equation}
when $n_i \ra \infty$. Taking $n_i\ra \infty$, we get 
\[\big(\ov{\mc{H}}_1\cdots\ov{\mc{H}}_{d-r}\cdot \ov{\mc{M}}\cdot \ov{\mc{N}}_1\cdots \ov{\mc{N}}_r\big)\geq\big(\ov{\mc{H}}_1\cdots\ov{\mc{H}}_{d-r}\cdot \ov{\mc{L}}\cdot \ov{\mc{N}}_1\cdots \ov{\mc{N}}_r\big),\]
which completes the induction.
\end{proof}

\begin{cor}\label{Siu1cor}
		Let $\mathcal{X}\ra \Spe\mb{Z}$ be an arithmetic variety with relative dimension $d$, and $0\leq r\leq d-1$ be an integer. Let $\ov{\mc{L}},\ov{\mc{M}}$ and $\ov{\mc{N}}_i$, $i=1,2,\ldots,r$ be nef Hermitian line bundles such that the arithmetic intersection $\big(\ov{\mc{M}}^{d+1-r}\cdot \ov{\mc{N}}_1\cdots \ov{\mc{N}}_r\big)>0$, then 
	\[
	\ov{\mc{L}}\cdot \ov{\mc{N}}_1\cdots \ov{\mc{N}}_r\leq (d+1-r) \frac{\big(\ov{\mc{M}}^{d-r}\cdot\ov{\mc{L}}\cdot \ov{\mc{N}}_1\cdots \ov{\mc{N}}_r\big)}{\big(\ov{\mc{M}}^{d+1-r}\cdot \ov{\mc{N}}_1\cdots \ov{\mc{N}}_r\big)}\cdot \ov{\mc{M}}\cdot \ov{\mc{N}}_1\cdots \ov{\mc{N}}_r.
	\]
\end{cor}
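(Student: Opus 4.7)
The plan is to deduce this corollary from Lemma \ref{Siu1} by a simple rescaling argument; the corollary is essentially the ``sharp'' form of that lemma, obtained by optimizing the scaling factor of $\ov{\mc{M}}$. Introduce the shorthand
\[A:=\bigl(\ov{\mc{M}}^{d+1-r}\cdot \ov{\mc{N}}_1\cdots \ov{\mc{N}}_r\bigr)>0,\qquad B:=\bigl(\ov{\mc{M}}^{d-r}\cdot \ov{\mc{L}}\cdot \ov{\mc{N}}_1\cdots \ov{\mc{N}}_r\bigr),\]
so that the target inequality reads
\[\ov{\mc{L}}\cdot \ov{\mc{N}}_1\cdots \ov{\mc{N}}_r\leq \tfrac{(d+1-r)B}{A}\,\ov{\mc{M}}\cdot \ov{\mc{N}}_1\cdots \ov{\mc{N}}_r\]
in $\widehat{\mr{CH}}^{r+1}(\mc{X})_{\mb{Q}}$.

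First I would fix positive integers $m,n$ with $nA>(d+1-r)mB$ and apply Lemma \ref{Siu1} with $\ov{\mc{L}}$ and $\ov{\mc{M}}$ replaced by the tensor powers $m\ov{\mc{L}}$ and $n\ov{\mc{M}}$ (keeping the $\ov{\mc{N}}_i$ unchanged). By multilinearity of arithmetic intersection, the hypothesis of the lemma becomes
\[n^{d+1-r}A>(d+1-r)\,m\,n^{d-r}B,\]
which is equivalent to the inequality picked on $m,n$, so the lemma does apply. Unwinding the conclusion and pulling $m,n$ out of the pairing, one obtains
\[m\bigl(\ov{\mc{H}}_1\cdots\ov{\mc{H}}_{d-r}\cdot\ov{\mc{L}}\cdot\ov{\mc{N}}_1\cdots\ov{\mc{N}}_r\bigr)\leq n\bigl(\ov{\mc{H}}_1\cdots\ov{\mc{H}}_{d-r}\cdot\ov{\mc{M}}\cdot\ov{\mc{N}}_1\cdots\ov{\mc{N}}_r\bigr)\]
for every tuple of nef Hermitian line bundles $\ov{\mc{H}}_1,\ldots,\ov{\mc{H}}_{d-r}$.

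The second step is to divide by $m$ and take the infimum over all rationals $n/m$ strictly greater than $(d+1-r)B/A$; density of such rationals immediately above this value yields
\[\bigl(\ov{\mc{H}}_1\cdots\ov{\mc{H}}_{d-r}\cdot\ov{\mc{L}}\cdot\ov{\mc{N}}_1\cdots\ov{\mc{N}}_r\bigr)\leq \tfrac{(d+1-r)B}{A}\bigl(\ov{\mc{H}}_1\cdots\ov{\mc{H}}_{d-r}\cdot\ov{\mc{M}}\cdot\ov{\mc{N}}_1\cdots\ov{\mc{N}}_r\bigr),\]
which is exactly the corollary's inequality in the sense defined just before Lemma \ref{Siu1}. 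There is no real obstacle here beyond the observation that tensor powers of nef Hermitian line bundles remain nef and that the arithmetic intersection pairing is $\mb{Z}$-multilinear, so all the nontrivial analytic content (the arithmetic Demailly approximation, the induction on $r$, and Yuan's bigness theorem) is already handled inside Lemma \ref{Siu1}.
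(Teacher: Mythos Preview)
Your proposal is correct and follows essentially the same rescaling-and-limit strategy as the paper. The only cosmetic difference is that the paper rescales only $\ov{\mc{M}}$ by a real parameter $\lambda>(d+1-r)B/A$ and lets $\lambda$ decrease to this threshold, whereas you rescale both $\ov{\mc{L}}$ and $\ov{\mc{M}}$ by integers $m,n$ and pass to the infimum over rationals $n/m$; both arguments invoke Lemma~\ref{Siu1} after rescaling and then take a limit, so the content is identical.
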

\begin{proof}
	 We take $\lambda>(d+1-r)\frac{\big(\ov{\mc{M}}^{d-r}\cdot\ov{\mc{L}}\cdot \ov{\mc{N}}_1\cdots \ov{\mc{N}}_r\big)}{\big(\ov{\mc{M}}^{d+1-r}\cdot \ov{\mc{N}}_1\cdots \ov{\mc{N}}_r\big)}$, then
	\[\big((\lambda\ov{\mc{M}})^{d+1-r}\cdot \ov{\mc{N}}_1\cdots \ov{\mc{N}}_r\big)>(d+1-r)\cdot \big((\lambda\ov{\mc{M}})^{d-r}\cdot\ov{\mc{L}}\cdot \ov{\mc{N}}_1\cdots \ov{\mc{N}}_r\big).\]
	By Lemma \ref{Siu1}, there is an inequality
	\[ \lambda\ov{\mc{M}}\cdot \ov{\mc{N}}_1\cdots \ov{\mc{N}}_r\geq  \ov{\mc{L}}\cdot \ov{\mc{N}}_1\cdots \ov{\mc{N}}_r.\]
	Let $\lambda\ra (d+1-r)\frac{\big(\ov{\mc{M}}^{d-r}\cdot\ov{\mc{L}}\cdot \ov{\mc{N}}_1\cdots \ov{\mc{N}}_r\big)}{\big(\ov{\mc{M}}^{d+1-r}\cdot \ov{\mc{N}}_1\cdots \ov{\mc{N}}_r\big)}$, one gets
	\[
\ov{\mc{L}}\cdot \ov{\mc{N}}_1\cdots \ov{\mc{N}}_r\leq (d+1-r) \frac{\big(\ov{\mc{M}}^{d-r}\cdot\ov{\mc{L}}\cdot \ov{\mc{N}}_1\cdots \ov{\mc{N}}_r\big)}{\big(\ov{\mc{M}}^{d+1-r}\cdot \ov{\mc{N}}_1\cdots \ov{\mc{N}}_r\big)}\cdot \ov{\mc{M}}\cdot \ov{\mc{N}}_1\cdots \ov{\mc{N}}_r.
\]
\end{proof}

\begin{thm}\label{Siu2}
	Let $\mathcal{X}\ra \Spe\mb{Z}$ be an arithmetic variety with relative dimension $d$, and $r$ and $k$ be positive integers with $r+k\leq d$. Let $\ov{\mc{L}}_1,\ldots, \ov{\mc{L}}_{k}, \ov{\mc{M}},\ov{\mc{N}}_1,\ldots,\ov{\mc{N}_r}$ be nef  Hermitian line bundles. Suppose $\big(\ov{\mc{M}}^{d+1-r}\cdot\ov{\mc{N}}_1\cdots \ov{\mc{N}}_r\big)>0$. Then
\[	\ov{\mc{L}}_1\cdots \ov{\mc{L}}_k\cdot\ov{\mc{N}}_1\cdots \ov{\mc{N}}_r\leq (d+2-r-k)^k\frac{\big(\ov{\mc{L}}_1\cdots\ov{\mc{L}}_k\cdot\ov{\mc{M}}^{d+1-r-k}\cdot\ov{\mc{N}}_1\cdots \ov{\mc{N}}_r\big)}{\big(\ov{\mc{M}}^{d+1-r}\cdot\ov{\mc{N}}_1\cdots \ov{\mc{N}}_r\big)}\cdot \ov{\mc{M}}^{k}\cdot\ov{\mc{N}}_1\cdots \ov{\mc{N}}_r.\]
	
\end{thm}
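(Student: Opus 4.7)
The plan is to prove this by induction on $k$, with the base case $k=1$ given directly by Corollary \ref{Siu1cor} (where the coefficient $(d+2-r-1)^1=d+1-r$ matches). For the inductive step from $k-1$ to $k$, I would combine one application of Corollary \ref{Siu1cor} (to peel off $\ov{\mc{L}}_1$) with one application of the inductive hypothesis (to collapse the remaining $\ov{\mc{L}}_2,\ldots,\ov{\mc{L}}_k$), arranged so that the two auxiliary ratios telescope cleanly.

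Concretely, I would apply Corollary \ref{Siu1cor} with $\ov{\mc{L}}=\ov{\mc{L}}_1$ and with the $r+k-1$ ``nef companions'' taken to be $\ov{\mc{L}}_2,\ldots,\ov{\mc{L}}_k,\ov{\mc{N}}_1,\ldots,\ov{\mc{N}}_r$; the hypothesis $r+k\leq d$ guarantees $r+k-1\leq d-1$, matching the range in Corollary \ref{Siu1cor}. Denoting
\[
A=\big(\ov{\mc{M}}^{d+1-r-k}\cdot\ov{\mc{L}}_1\cdots\ov{\mc{L}}_k\cdot\ov{\mc{N}}_1\cdots\ov{\mc{N}}_r\big),\qquad B=\big(\ov{\mc{M}}^{d+2-r-k}\cdot\ov{\mc{L}}_2\cdots\ov{\mc{L}}_k\cdot\ov{\mc{N}}_1\cdots\ov{\mc{N}}_r\big),
\]
the corollary yields
\[
\ov{\mc{L}}_1\cdots\ov{\mc{L}}_k\cdot\ov{\mc{N}}_1\cdots\ov{\mc{N}}_r\;\leq\;(d+2-r-k)\cdot\frac{A}{B}\cdot\ov{\mc{M}}\cdot\ov{\mc{L}}_2\cdots\ov{\mc{L}}_k\cdot\ov{\mc{N}}_1\cdots\ov{\mc{N}}_r.
\]
Note that $A$ is precisely the numerator appearing in the statement of Theorem \ref{Siu2}. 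Next I would apply the inductive hypothesis at level $k-1$ to the class $\ov{\mc{L}}_2\cdots\ov{\mc{L}}_k\cdot\ov{\mc{M}}\cdot\ov{\mc{N}}_1\cdots\ov{\mc{N}}_r$, taking the new companion bundles to be $\ov{\mc{M}},\ov{\mc{N}}_1,\ldots,\ov{\mc{N}}_r$ (so the new ``$r'$'' is $r+1$). The required positivity $(\ov{\mc{M}}^{d+1-r'}\cdot\ov{\mc{M}}\cdot\ov{\mc{N}}_1\cdots\ov{\mc{N}}_r)=(\ov{\mc{M}}^{d+1-r}\cdot\ov{\mc{N}}_1\cdots\ov{\mc{N}}_r)$ is exactly our standing hypothesis, and the bound reads
\[
\ov{\mc{L}}_2\cdots\ov{\mc{L}}_k\cdot\ov{\mc{M}}\cdot\ov{\mc{N}}_1\cdots\ov{\mc{N}}_r\;\leq\;(d+2-r-k)^{k-1}\cdot\frac{B}{\big(\ov{\mc{M}}^{d+1-r}\cdot\ov{\mc{N}}_1\cdots\ov{\mc{N}}_r\big)}\cdot\ov{\mc{M}}^{k}\cdot\ov{\mc{N}}_1\cdots\ov{\mc{N}}_r.
\]
Substituting this into the previous inequality, the factor $B$ cancels, the exponent of $(d+2-r-k)$ becomes $k$, and one recovers precisely the inequality claimed in Theorem \ref{Siu2}.

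The main obstacle I expect is the auxiliary positivity $B>0$ needed to invoke Corollary \ref{Siu1cor} in the first step: for nef but not necessarily big $\ov{\mc{L}}_i$, the quantity $B$ could vanish. I would handle this exactly as in the proof of Lemma \ref{Siu1}, by a perturbation argument: replace each $\ov{\mc{L}}_i$ by $\ov{\mc{L}}_i+\varepsilon\ov{\mc{M}}$ for small $\varepsilon>0$. Expanded as a polynomial in $\varepsilon$, the perturbed $B$ has leading coefficient $(\ov{\mc{M}}^{d+1-r}\cdot\ov{\mc{N}}_1\cdots\ov{\mc{N}}_r)>0$, so it is strictly positive for small $\varepsilon>0$. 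I would run the induction for the perturbed bundles and then let $\varepsilon\to 0$; both sides of the claimed inequality depend continuously on the perturbation (all intersection numbers are multilinear), so the inequality passes to the limit.
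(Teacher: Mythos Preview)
Your proposal is correct and follows essentially the same approach as the paper. The paper presents the argument as an explicit chain of $k$ applications of Corollary~\ref{Siu1cor} (at step $i$ peeling off $\ov{\mc{L}}_i$ with companions $\ov{\mc{M}}^{i-1}\cdot\ov{\mc{L}}_{i+1}\cdots\ov{\mc{L}}_k\cdot\ov{\mc{N}}_1\cdots\ov{\mc{N}}_r$) whose ratios telescope, while you package the same chain as a formal induction on $k$; unwinding your induction recovers exactly the paper's sequence of inequalities. For the positivity issue the paper perturbs all nef bundles by $\varepsilon\ov{\mc{H}}$ with $\ov{\mc{H}}$ ample, whereas you perturb only the $\ov{\mc{L}}_i$ by $\varepsilon\ov{\mc{M}}$; both work, and your observation that the top $\varepsilon$-coefficient of $B_\varepsilon$ is exactly the standing positive quantity $(\ov{\mc{M}}^{d+1-r}\cdot\ov{\mc{N}}_1\cdots\ov{\mc{N}}_r)$ is the right justification.
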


\begin{proof}
	Since nef Hermitian line bundles can be approximated by ample Hermitian line bundles, we can assume all line bundles above are ample. Taking $\ov{\mc{N}}_1\cdots\ov{\mc{N}}_r$ to be $\ov{\mc{M}}^{i-1}\cdot \ov{\mc{L}}_{i+1}\cdots \ov{\mc{L}}_k\cdot\ov{\mc{N}}_1\cdots\ov{\mc{N}}_r$ and $\ov{\mc{L}}$ to be $\ov{\mc{L}}_i$ in Corollary \ref{Siu1cor}, we get the following inequality for $1\leq i\leq k$.
	\[\ov{\mc{M}}^{i-1}\cdot\ov{\mc{L}}_{i}\cdots \ov{\mc{L}}_k\cdot\ov{\mc{N}}_1\cdots \ov{\mc{N}}_r\leq
	(d+2-r-k) \frac{\big(\ov{\mc{L}}_i\cdots \ov{\mc{L}}_k\cdot\ov{\mc{M}}^{d+i-r-k}\cdot \ov{\mc{N}}_1\cdots \ov{\mc{N}}_r\big)}{\big(\ov{\mc{L}}_{i+1}\cdots \ov{\mc{L}}_k\cdot\ov{\mc{M}}^{d+i-r-k+1}\cdot \ov{\mc{N}}_1\cdots \ov{\mc{N}}_r\big)}\cdot \ov{\mc{M}}^i\cdot\ov{\mc{L}}_{i+1}\cdots \ov{\mc{L}}_k\cdot\ov{\mc{N}}_1\cdots \ov{\mc{N}}_r.\]
	Composing all $k$ inequalities for $1\leq i\leq k$, we obtain
	\begin{equation}
		\begin{aligned}
			\ov{\mc{L}}_1\cdots \ov{\mc{L}}_k\cdot\ov{\mc{N}}_1\cdots \ov{\mc{N}}_r&\leq(d+2-r-k) \frac{\big(\ov{\mc{L}}_1\cdots \ov{\mc{L}}_k\cdot\ov{\mc{M}}^{d+1-r-k}\cdot \ov{\mc{N}}_1\cdots \ov{\mc{N}}_r\big)}{\big(\ov{\mc{L}}_{2}\cdots \ov{\mc{L}}_k\cdot\ov{\mc{M}}^{d+2-r-k}\cdot \ov{\mc{N}}_1\cdots \ov{\mc{N}}_r\big)}\cdot \ov{\mc{M}}\cdot\ov{\mc{L}}_{2}\cdots \ov{\mc{L}}_k\cdot\ov{\mc{N}}_1\cdots \ov{\mc{N}}_r\\
			&\leq(d+2-r-k)^2 \frac{\big(\ov{\mc{L}}_1\cdots \ov{\mc{L}}_k\cdot\ov{\mc{M}}^{d+1-r-k}\cdot \ov{\mc{N}}_1\cdots \ov{\mc{N}}_r\big)}{\big(\ov{\mc{L}}_{3}\cdots \ov{\mc{L}}_k\cdot\ov{\mc{M}}^{d+3-r-k}\cdot \ov{\mc{N}}_1\cdots \ov{\mc{N}}_r\big)}\cdot \ov{\mc{M}}^2\cdot\ov{\mc{L}}_{3}\cdots \ov{\mc{L}}_k\cdot\ov{\mc{N}}_1\cdots \ov{\mc{N}}_r\\
			&\cdots\\
			&\leq (d+2-r-k)^k\frac{\big(\ov{\mc{L}}_1\cdots\ov{\mc{L}}_k\cdot\ov{\mc{M}}^{d+1-r-k}\cdot\ov{\mc{N}}_1\cdots \ov{\mc{N}}_r\big)}{\big(\ov{\mc{M}}^{d+1-r}\cdot\ov{\mc{N}}_1\cdots \ov{\mc{N}}_r\big)}\cdot \ov{\mc{M}}^{k}\cdot\ov{\mc{N}}_1\cdots \ov{\mc{N}}_r.
		\end{aligned}
	\end{equation}
\end{proof}

In most situations, we consider the case where $r=0$, and we state it as the following corollary.

\begin{cor}\label{Siu3}
	Let $\ov{\mc{L}}_1,\ldots,\ov{\mc{L}}_k,\ov{\mc{M}}$ be nef Hermitian line bundles on $\mc{X}$ and $k$ be a positive integer with $k\leq d$. Suppose $\big(\ov{\mc{M}}^{d+1}\big)>0$, then 
	\[ \ov{\mc{L}}_1\cdots \ov{\mc{L}}_k\leq
	(d+2-k)^k\frac{\big(\ov{\mc{L}}_1,\cdots,\ov{\mc{L}}_k\cdot\ov{\mc{M}}^{d+1-k}\big)}{\big(\ov{\mc{M}}^{d+1}\big)}\cdot \ov{\mc{M}}^{k}.\]
\end{cor}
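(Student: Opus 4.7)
The plan is to derive this corollary as the direct $r=0$ specialization of Theorem \ref{Siu2}; since the hypotheses and conclusion of Theorem \ref{Siu2} both collapse cleanly when no auxiliary bundles $\ov{\mc{N}}_i$ appear, essentially no additional argument is required.

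Concretely, I would first observe that when $r=0$ the constraint $r+k\leq d$ becomes $k\leq d$, which is exactly the hypothesis of the corollary, and the positivity condition $\big(\ov{\mc{M}}^{d+1-r}\cdot\ov{\mc{N}}_1\cdots\ov{\mc{N}}_r\big)>0$ reduces to $\big(\ov{\mc{M}}^{d+1}\big)>0$, which is the arithmetic bigness assumption on $\ov{\mc{M}}$. Substituting $r=0$ into the conclusion of Theorem \ref{Siu2} then gives exactly
\[\ov{\mc{L}}_1\cdots \ov{\mc{L}}_k\leq (d+2-k)^k\frac{\big(\ov{\mc{L}}_1\cdots\ov{\mc{L}}_k\cdot\ov{\mc{M}}^{d+1-k}\big)}{\big(\ov{\mc{M}}^{d+1}\big)}\cdot \ov{\mc{M}}^{k},\]
which is the claimed inequality, so the result follows at once.

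There is no genuine obstacle here; the corollary is literally an instance of Theorem \ref{Siu2}. If one wished to avoid citing Theorem \ref{Siu2}, an alternative would be to rerun its telescoping argument directly in this simpler setting: apply Corollary \ref{Siu1cor} iteratively to replace each $\ov{\mc{L}}_i$ by $\ov{\mc{M}}$, each time picking up a factor of $d+2-k$, and then multiply the resulting ratios so that all numerators and denominators cancel except the global ones displayed above. This would just reproduce the proof of Theorem \ref{Siu2} with the empty product of $\ov{\mc{N}}$'s, so invoking Theorem \ref{Siu2} as a black box is the cleanest route.
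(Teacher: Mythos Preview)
Your proposal is correct and matches the paper's treatment: the corollary is stated there precisely as the $r=0$ instance of Theorem~\ref{Siu2}, with no separate argument given. The one cosmetic wrinkle is that Theorem~\ref{Siu2} as stated asks for $r$ to be a \emph{positive} integer, but its proof (the telescoping application of Corollary~\ref{Siu1cor}, which does allow $r=0$) goes through verbatim when $r=0$, exactly as you sketch in your alternative.
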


Now we use Corollary \ref{Siu3} to prove Theorem \ref{Submulti2}.

\begin{thm}\label{Submulti2}
	There is a positive constant $C>0$ such that for any dominant rational self-maps $F:\mc{X}\dashrightarrow \mc{X}$ and $G:\mc{X}\dashrightarrow \mc{X}$,
	\[\widehat{\deg}_k(F\circ G)\leq C\cdot\widehat{\deg}_k(F)\cdot\widehat{\deg}_k(G).\]
	The constant $C$ only depends on the integers $k, d=\dim \mc{X}$ and the big and nef Hermitian line bundle $\ov{\mc{H}}$.
\end{thm}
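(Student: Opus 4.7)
The strategy mirrors the classical geometric submultiplicativity argument, lifting the intersection calculation to a single arithmetic variety that simultaneously dominates the three graphs $\Gamma_F$, $\Gamma_G$, and $\Gamma_{F\circ G}$, and then using the arithmetic Siu inequality (Corollary \ref{Siu3}) to split the key intersection number into an $F$-factor and a $G$-factor. Let $\Gamma$ be the normalization of the Zariski closure in $\mc{X}\times_{\Spe\mb{Z}}\mc{X}\times_{\Spe\mb{Z}}\mc{X}$ of the graph of the rational map $x\mapsto(x,G(x),F(G(x)))$, and let $p_1,p_2,p_3:\Gamma\to\mc{X}$ denote the three coordinate projections. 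By construction $(p_1,p_2):\Gamma\to\Gamma_G$ and $(p_1,p_3):\Gamma\to\Gamma_{F\circ G}$ are birational, whereas $(p_2,p_3):\Gamma\to\Gamma_F$ is generically finite of degree $e:=\deg_{\mr{top}}(G)$, and consequently $p_2:\Gamma\to\mc{X}$ also has generic degree $e$.

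From the projection formula applied to these three maps one deduces the following identities on $\Gamma$:
\[
\widehat{\deg}_k(F\circ G)=\bigl(p_3^*\ov{\mc{H}}^k\cdot p_1^*\ov{\mc{H}}^{d+1-k}\bigr),\qquad \widehat{\deg}_k(G)=\bigl(p_2^*\ov{\mc{H}}^k\cdot p_1^*\ov{\mc{H}}^{d+1-k}\bigr),
\]
\[
\bigl(p_3^*\ov{\mc{H}}^k\cdot p_2^*\ov{\mc{H}}^{d+1-k}\bigr)=e\cdot\widehat{\deg}_k(F),\qquad \bigl(p_2^*\ov{\mc{H}}^{d+1}\bigr)=e\cdot(\ov{\mc{H}}^{d+1}).
\]

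I would now apply Corollary \ref{Siu3} on $\Gamma$ with $\ov{\mc{L}}_1=\cdots=\ov{\mc{L}}_k=p_3^*\ov{\mc{H}}$ and $\ov{\mc{M}}=p_2^*\ov{\mc{H}}$, both nef as pullbacks of $\ov{\mc{H}}$; the positivity assumption $(\ov{\mc{M}}^{d+1})=e(\ov{\mc{H}}^{d+1})>0$ holds since $G$ is dominant. Intersecting the resulting inequality in $\widehat{\mr{CH}}^k(\Gamma)$ with the nef class $p_1^*\ov{\mc{H}}^{d+1-k}$ produces
\[
\bigl(p_3^*\ov{\mc{H}}^k\cdot p_1^*\ov{\mc{H}}^{d+1-k}\bigr)\le (d+2-k)^k\,\frac{\bigl(p_3^*\ov{\mc{H}}^k\cdot p_2^*\ov{\mc{H}}^{d+1-k}\bigr)}{\bigl(p_2^*\ov{\mc{H}}^{d+1}\bigr)}\,\bigl(p_2^*\ov{\mc{H}}^k\cdot p_1^*\ov{\mc{H}}^{d+1-k}\bigr),
\]
into which substitution of the four identities above makes the factor $e$ cancel, giving the desired bound with $C=(d+2-k)^k/(\ov{\mc{H}}^{d+1})$. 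The boundary cases $k=0$ and $k=d+1$, outside the range of Corollary \ref{Siu3}, are immediate: $\widehat{\deg}_0$ is independent of the map, while multiplicativity of the topological degree handles $k=d+1$.

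The main technical obstacle I expect is verifying that the projection formula for arithmetic intersection numbers applies cleanly on $\Gamma$, which is normal but need not be generically smooth. If this is not directly available in the chosen formalism, one replaces $\Gamma$ by a generically smooth alteration $\wt{\Gamma}\to\Gamma$ of some degree $\delta$; the factor $\delta$ then appears symmetrically in every intersection number used in the argument and cancels out of the final inequality, so the same constant $C$ works.
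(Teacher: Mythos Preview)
Your proposal is correct and is essentially the same argument as the paper's proof: the paper builds $\Gamma$ as the normalization of the graph of $\pi_3^{-1}\circ\pi_2:\Gamma_G\dashrightarrow\Gamma_F$, but this is exactly your triple-graph variety with $p_1=\pi_1\circ u$, $p_2=\pi_2\circ u=\pi_3\circ v$, $p_3=\pi_4\circ v$, and the paper then applies Corollary~\ref{Siu3} and the projection formula in the identical way, obtaining the same constant $C=(d+2-k)^k/(\ov{\mc{H}}^{d+1})$. Your worry about generic smoothness is unnecessary here, since the projection formula for arithmetic intersection numbers (e.g.\ \cite[Proposition~2.3.1(iv)]{bost_heights_1994}) requires only a generically finite morphism of arithmetic varieties.
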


\begin{proof}
	Let $\Gamma_G$ (resp. $\Gamma_F$) be the normalization of the graph of $G$ (resp. $F$), and $\pi_1,\pi_2$ (resp. $\pi_3, \pi_4$) be projections to the first and second coordinate. Let $\Gamma$ be the normalization of the graph of $\pi_3^{-1}\circ \pi_2$ and $u,v$ are projections to $\Gamma_G,\Gamma_F$ respectively. Then we have the following diagram.
	
	\[\begin{tikzcd}
		&                                                   & \Gamma \arrow[ld, "u"'] \arrow[rd, "v"]   &                                                   &                    \\
		& \Gamma_G \arrow[ld, "\pi_1"'] \arrow[rd, "\pi_2"] &                                           & \Gamma_F \arrow[ld, "\pi_3"'] \arrow[rd, "\pi_4"] &                    \\
		\mc{X} \arrow[rrd] \arrow[rr, "G"', dashed] &                                                   & \mc{X} \arrow[d] \arrow[rr, "F"', dashed] &                                                   & \mc{X} \arrow[lld] \\
		&                                                   & \Spe \mb{Z}                              &                                                   &                   
	\end{tikzcd}\]
	The morphisms $\pi_1,\pi_3,u$ are birational and $\pi_2,\pi_4,v$ are generically finite.
	Applying Corollary \ref{Siu3} to $v^*\pi_4^*\ov{\mc{H}}^k$ and $u^*\pi_2^*\ov{\mc{H}}^k$, we obtain
	\[v^*\pi_4^*\ov{\mc{H}}^k\leq (d+2-k)^k\frac{\big(u^*\pi_2^*\ov{\mc{H}}^{d+1-k}\cdot v^*\pi_4^*\ov{\mc{H}}^k\big)}{\big(u^*\pi_2^*\ov{\mc{H}}^{d+1}\big)}\cdot u^*\pi_2^*\ov{\mc{H}}^k.\]
	Intersecting both sides of the inequality above with $u^*\pi_1^*\ov{\mc{H}}^{d+1-k}$, one gets
	\[\widehat{\deg}_k(F\circ G)\leq (d+2-k)^k\frac{\big(u^*\pi_2^*\ov{\mc{H}}^{d+1-k}\cdot v^*\pi_4^*\ov{\mc{H}}^k\big)}{\big(u^*\pi_2^*\ov{\mc{H}}^{d+1}\big)} \widehat{\deg}_k(G).\]
	Note that $\pi_2\circ u=\pi_3\circ v$. Denote the topological degree of $v$ by $\delta$, and we have 
	\[\frac{\big(u^*\pi_2^*\ov{\mc{H}}^{d+1-k}\cdot v^*\pi_4^*\ov{\mc{H}}^k\big)}{\big(u^*\pi_2^*\ov{\mc{H}}^{d+1}\big)}= \frac{\big(v^*\pi_3^*\ov{\mc{H}}^{d+1-k}\cdot v^*\pi_4^*\ov{\mc{H}}^k\big)}{\big(v^*\pi_3^*\ov{\mc{H}}^{d+1}\big)}=\frac{\delta\cdot\big(\pi_3^*\ov{\mc{H}}^{d+1-k}\cdot \pi_4^*\ov{\mc{H}}^k\big)}{\delta\cdot\big(\pi_3^*\ov{\mc{H}}^{d+1}\big)}=\frac{\widehat{\deg}_k(F)}{\big(\ov{\mc{H}}^{d+1}\big)}.\]
	Thus
	\[\widehat{\deg}_k(F\circ G)\leq C
	\cdot \widehat{\deg}_k(F)\cdot\widehat{\deg}_k(G),\]
	where $C=\frac{(d+1-k)^k}{\big(\ov{\mc{H}}^{d+1}\big)}>0$ is a constant only relies on $d,k$ and $\ov{\mc{H}}$.
\end{proof}

\begin{cor}\label{Cor_welldef}
Let $f: X\dashrightarrow X$ be a dominant rational self-map of $X$. Consider the $k$-th arithmetic degree
	\[\alpha_k(f)=\lim\limits_{n\ra \infty}\widehat{\deg}_k(f^n)^{1/n}.\]
	Then the limit exists and is independent of the choice of the model $\mc{X}$ or the ample Hermitian line bundle $\ov{\mc{H}}$.
\end{cor}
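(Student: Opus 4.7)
The plan has two parts: existence via Theorem~\ref{Submulti2} and Fekete's lemma, and independence of $(\mc{X},\ov{\mc{H}})$ via a double application of Corollary~\ref{Siu3} on the normalized graph.

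\emph{Existence.} Write $\widehat{\deg}_k^{\ov{\mc{H}}}(F^n)$ for the intersection number defined with respect to a given choice of $\ov{\mc{H}}$. Since $\pi_1$ is birational and $\pi_2$ is generically finite surjective, $\pi_1^*\ov{\mc{H}}$ and $\pi_2^*\ov{\mc{H}}$ are nef, and the intersection $\widehat{\deg}_k(F^n)$ is strictly positive. Theorem~\ref{Submulti2} gives that $\log\bigl(C\cdot\widehat{\deg}_k(F^n)\bigr)$ is subadditive in $n$, so Fekete's lemma yields the existence of $\lim_n\widehat{\deg}_k(F^n)^{1/n}\in[0,+\infty)$.

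\emph{Independence of $\ov{\mc{H}}$, fixed model.} For two ample Hermitian line bundles $\ov{\mc{H}},\ov{\mc{H}}'$ on $\mc{X}$, I plan to apply Corollary~\ref{Siu3} twice on $\Gamma_{F^n}$. First take $\ov{\mc{L}}_1=\cdots=\ov{\mc{L}}_k=\pi_2^*\ov{\mc{H}}$ and $\ov{\mc{M}}=\pi_2^*\ov{\mc{H}}'$ (the positivity $((\pi_2^*\ov{\mc{H}}')^{d+1})>0$ follows from generic finiteness of $\pi_2$), and intersect the resulting inequality with the nef class $(\pi_1^*\ov{\mc{H}})^{d+1-k}$. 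Second, take $\ov{\mc{L}}_1=\cdots=\ov{\mc{L}}_{d+1-k}=\pi_1^*\ov{\mc{H}}$ and $\ov{\mc{M}}=\pi_1^*\ov{\mc{H}}'$, and intersect with $(\pi_2^*\ov{\mc{H}}')^k$. The crucial observation is that the ratios appearing in the two Siu constants simplify, by the projection formula, to ratios like $(\ov{\mc{H}}^k\cdot\ov{\mc{H}}'^{d+1-k})/(\ov{\mc{H}}'^{d+1})$ on $\mc{X}$ itself (using that $\pi_1$ is birational and that the $\deg(\pi_2)$ factors cancel in the first step), hence are independent of $n$. Composing the two inequalities gives $\widehat{\deg}_k^{\ov{\mc{H}}}(F^n)\leq C'\cdot\widehat{\deg}_k^{\ov{\mc{H}}'}(F^n)$ with $C'$ depending only on $\ov{\mc{H}},\ov{\mc{H}}',d,k$; symmetry then yields equality of the corresponding limits.

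\emph{Independence of the model.} For two models $(\mc{X}_i,\ov{\mc{H}}_i)$, take a common dominating model $\mc{X}'$ with birational $p_i\colon\mc{X}'\to\mc{X}_i$ (e.g.\ a normalization of the closure of the graph of the identity on $X$ in $\mc{X}_1\times_{\Spe\mb{Z}}\mc{X}_2$), lift $F$ to $F'$ on $\mc{X}'$, and use the projection formula to identify $\widehat{\deg}_k^{(\mc{X}_i,\ov{\mc{H}}_i)}(F^n)=\widehat{\deg}_k^{(\mc{X}',p_i^*\ov{\mc{H}}_i)}(F^{\prime n})$. Then fix any ample $\ov{\mc{H}}'$ on $\mc{X}'$ and apply the previous paragraph's argument with $p_i^*\ov{\mc{H}}_i$ playing the role of $\ov{\mc{H}}$; this works because the Siu-based estimate only requires nefness of the $\ov{\mc{L}}_j$ and ampleness of $\ov{\mc{M}}$. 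The main obstacle is the birational compatibility of the normalized graphs on different models: one must verify that the natural rational map $\mc{X}'\times\mc{X}'\dashrightarrow\mc{X}_i\times\mc{X}_i$ sends the normalization of $\Gamma_{F^{\prime n}}$ birationally onto the normalization of $\Gamma_{F^n}$ in a way that intertwines the projections $\pi_1,\pi_2$. Once this is granted the projection formula gives the identification and the rest is routine.
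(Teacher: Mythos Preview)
Your existence argument is the same as the paper's. Your independence argument is correct but takes a different route.

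The paper handles independence by invoking submultiplicativity (Theorem~\ref{Submulti2}) once more, this time for compositions with the birational change-of-model maps: writing $F_3^n=p_1^{-1}\circ F_1^n\circ p_1$ on a common dominating model $\mc{X}_3$, one gets $\widehat{\deg}_k(F_3^n)\le C\cdot\widehat{\deg}_k(p_1^{-1})\cdot\widehat{\deg}_k(F_1^n)\cdot\widehat{\deg}_k(p_1)$ and symmetrically, so the limits agree. You instead apply Corollary~\ref{Siu3} twice on $\Gamma_{F^n}$ and use the projection formula to see that the Siu constants are intersection numbers on $\mc{X}$ itself, hence independent of $n$. Your method is more explicit about \emph{why} the comparison constant does not depend on $n$, and it avoids the (mild) extension of Theorem~\ref{Submulti2} to maps between distinct varieties that the paper silently uses. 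The paper's method is shorter and packages everything into one invocation of submultiplicativity; it also sidesteps the graph-compatibility check you flag as the ``main obstacle'' (which, to be clear, is routine: the normalized graphs on $\mc{X}'$ and $\mc{X}_i$ share the same function field, so their normalizations are canonically isomorphic and the projections match under $p_i$).

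Two small points. First, your claim that $\widehat{\deg}_k(F^n)>0$ does not follow merely from nefness of $\pi_1^*\ov{\mc{H}}$ and $\pi_2^*\ov{\mc{H}}$; one needs an extra argument (e.g.\ the bound $\widehat{\deg}_k(F)\ge C\cdot\deg_k(f)>0$ from the proof of Theorem~\ref{RelaDeg2}, or the trivial computations at $k=0,d+1$). The paper glosses over this too. Second, your double Siu step uses Corollary~\ref{Siu3} with $d+1-k$ factors, which requires $1\le k\le d$; the boundary cases $k=0$ and $k=d+1$ are degenerate and should be noted separately.
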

	\begin{proof}
		By applying Fekete's lemma \cite[II]{fekete_uber_1923} to Theorem \ref{Submulti2}, we can establish the existence of the limit
	\[\alpha_k(f)=\lim\limits_{n\ra \infty}\widehat{\deg}_k(f^n)^{1/n}.\]
 Our remaining task is to demonstrate its independence.
	
		Suppose $\mc{X}_1$ and $\mc{X}_2$ are two integral models of $X$, $\ov{\mc{H}}_1$ and $\ov{\mc{H}}_2$ are ample Hermitian line bundles on $\mc{X}_1$ and $\mc{X}_2$.
		We take $\mc{X}_3$ to be the Zariski closure of the image of the composition
		\[X\ora{\Delta}X\times_{\Spe\mb{Q}}X\ra \mc{X}\times_{\Spe \mb{Z}}\mc{X}.\]
		Denote by $p_1:\mc{X}_3\ra \mc{X}_1, p_2:\mc{X}_3\ra \mc{X}_2$ the projections, which are birational. There is a commutative diagram
		\[\begin{tikzcd}
			\mc{X}_1 \arrow[d, "F_1"', dashed] & \mc{X}_3 \arrow[d, "F_3", dashed] \arrow[l, "p_1"'] \arrow[r, "p_2"] & \mc{X}_2 \arrow[d, "F_2", dashed] \\
			\mc{X}_1                           & \mc{X}_3 \arrow[l, "p_1"'] \arrow[r, "p_2"]                          & \mc{X}_2                         
		\end{tikzcd}.\]
		Fix an ample Hermitian line bundle $\ov{\mc{H}}_3$ on $\mc{X}_3$, then
		\[\widehat{\deg}_{k}(F_3^n)=\widehat{\deg}_{k}(p_1^{-1}\circ F_1^n\circ p_1)\leq C\cdot \widehat{\deg}_{k}(p_1^{-1})\cdot \widehat{\deg}_{k}(F_1^n)\cdot \widehat{\deg}_{k}(p_1)\]
		for some constant $C>0$. We also have
		\[\widehat{\deg}_{k}(F_1^n)=\widehat{\deg}_{k}(p_1\circ F_3^n\circ p_1^{-1})\leq C'\cdot \widehat{\deg}_{k}(p_1)\cdot \widehat{\deg}_{k}(F_3^n)\cdot \widehat{\deg}_{k}(p_1^{-1}).\]
		Hence 
		\[\lim_{n\ra \infty}\widehat{\deg}_{k}(F_1^n)^{1/n}=\lim_{n\ra \infty}\widehat{\deg}_{k}(F_3^n)^{1/n}.\]
		In other words, the arithmetic degree defined on $\mc{X}_1$ and $\mc{X}_3$ are the same. Similarly,  the arithmetic degree defined on $\mc{X}_2$ and $\mc{X}_3$ are the same. Thus $\alpha_k(f)$ is independent of the choice integral models or ample Hermitian line bundles.
	\end{proof}

The birational invariance of $\alpha_k(f)$ will be proved in Corollary \ref{birinv} by the relative degree formula.

\section{The relative degree formula}\label{SecRel}
In this section, we present the proof of Theorem \ref{RelaDeg2}. We begin by proving a lemma that is similar to Corollary \ref{Siu3}. Throughout the proof, we will make use of the following fact several times: for $t>0$
\[\big(\ov{\mc{L}}_1\cdots\ov{\mc{L}}_d\cdot \ov{\mO}_{\mc{X}}(t)\big)=t\int_{\mc{X}(\mb{C})}c_1(\ov{\mc{L}}_1)\cdots c_1(\ov{\mc{L}}_d)=t\big(L_1\cdots L_d\big).\]
The reason for this formula is that the product of the Chern forms coincides with the cup product in cohomology. (See \cite[Chapter 9, Remark 8.10]{demailly_complex_nodate}.) By this formula, we have 
\begin{equation}\label{cruial}
	\begin{aligned}
		\big(\ov{\mc{L}}^k\cdot\ov{\mc{M}}(t)^{d+1-k}\big)&=\sum_{j=0}^{d+1-k}\binom{d+1-k}{j}\big(\ov{\mc{L}}^k\cdot\ov{\mc{M}}^{d+1-k-j}\cdot \ov{\mO}_{\mc{X}}(t)^{j}\big)\\	
		&=\big(\ov{\mc{L}}^k\cdot\ov{\mc{M}}^{d+1-k}\big)+(d+1-k)\big(\ov{\mc{L}}^k\cdot\ov{\mc{M}}^{d-k}\cdot \ov{\mO}_{\mc{X}}(t)\big)\\
		&=\big(\ov{\mc{L}}^k\cdot\ov{\mc{M}}^{d+1-k}\big)+(d+1-k)t\big(L^k\cdot M^{d-k}\big)
	\end{aligned}
\end{equation}
\[\]
\begin{lem}\label{Siu4}
	Let $\mathcal{X}\ra \Spe\mb{Z}$ be an arithmetic variety with relative dimension $d$.  Let $\ov{\mc{L}},\ov{\mc{M}}$ be nef Hermitian line bundles with generic fibers $L=\mc{L}_{\mb{Q}}$ and $M=\mc{M}_{\mb{Q}}$. Suppose $M$ is big, and $(L^k\cdot M^{d-k})>0$. Then there exists a positive constant $t_0>0$ only depends on $d, k, \ov{\mc{L}}$ and $\ov{\mc{M}}$,  such that for any arithmetic variety $\mc{Y}\ra \Spe \mb{Z}$ with $\dim\mc{Y}=\dim\mc{X}$ and any dominant generically finite morphism $g:\mc{Y}\ra \mc{X}$, we have
	\[ g^*\ov{\mc{L}}^k\leq
	(d+2-k)^k\frac{\big(L^k\cdot M^{d-k}\big)}{\big(M^{d}\big)}\cdot g^*\ov{\mc{M}}(t_0)^{k}.\]
	Here the symbol $\leq$ carries the same meaning as in Lemma \ref{Siu1}.
	
\end{lem}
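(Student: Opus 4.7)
The plan is to apply Corollary~\ref{Siu3} to the pullbacks $g^*\ov{\mc{L}}$ and $g^*\ov{\mc{M}}(t_0)$ on $\mc{Y}$, then use the projection formula to reduce the resulting coefficient to one computed purely on $\mc{X}$, and finally choose $t_0$ so as to bound that coefficient by the target $(L^k\cdot M^{d-k})/(M^d)$. First I would verify the hypotheses of Corollary~\ref{Siu3}: the twist $\ov{\mc{M}}(t_0)=\ov{\mc{M}}\otimes \ov{\mO}_{\mc{X}}(t_0)$ is nef, since $\ov{\mc{M}}$ is nef and the displayed identity above the lemma gives $(\ov{\mc{H}}_1\cdots \ov{\mc{H}}_d \cdot \ov{\mO}_{\mc{X}}(t_0))=t_0(H_1\cdots H_d)\geq 0$ for any nef $\ov{\mc{H}}_j$. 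The top self-intersection expands as $(\ov{\mc{M}}(t_0)^{d+1})=(\ov{\mc{M}}^{d+1})+(d+1)t_0 (M^d)>0$ using the bigness of $M$. Pulling back by the dominant generically finite $g$ preserves nefness, and multiplies the top self-intersection by the topological degree $\delta_g$ of $g$; hence $g^*\ov{\mc{M}}(t_0)$ also satisfies the hypotheses of Corollary~\ref{Siu3}.

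Applying Corollary~\ref{Siu3} on $\mc{Y}$ then yields
$$g^*\ov{\mc{L}}^k \leq (d+2-k)^k \frac{\bigl(g^*\ov{\mc{L}}^k \cdot g^*\ov{\mc{M}}(t_0)^{d+1-k}\bigr)}{\bigl(g^*\ov{\mc{M}}(t_0)^{d+1}\bigr)} \cdot g^*\ov{\mc{M}}(t_0)^k.$$
By the projection formula, the factor $\delta_g$ cancels in both numerator and denominator, so the coefficient equals the corresponding ratio computed on $\mc{X}$ and is therefore \emph{independent of $\mc{Y}$ and $g$}. This structural observation delivers the uniformity of $t_0$ in $g$ essentially for free.

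It remains to choose $t_0>0$, depending only on $d,k,\ov{\mc{L}},\ov{\mc{M}}$, with
$$\frac{\bigl(\ov{\mc{L}}^k \cdot \ov{\mc{M}}(t_0)^{d+1-k}\bigr)}{\bigl(\ov{\mc{M}}(t_0)^{d+1}\bigr)} \leq \frac{(L^k \cdot M^{d-k})}{(M^d)}.$$
Expanding both sides via the recalled identity, the numerator equals $(\ov{\mc{L}}^k \cdot \ov{\mc{M}}^{d+1-k}) + (d+1-k)t_0(L^k\cdot M^{d-k})$ and the denominator equals $(\ov{\mc{M}}^{d+1}) + (d+1)t_0(M^d)$, so cross-multiplying reduces the desired inequality to the linear condition $k t_0 (M^d)(L^k\cdot M^{d-k}) \geq (M^d)(\ov{\mc{L}}^k \cdot \ov{\mc{M}}^{d+1-k}) - (L^k\cdot M^{d-k})(\ov{\mc{M}}^{d+1})$. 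Since $(M^d)>0$ and $(L^k\cdot M^{d-k})>0$ by hypothesis, this is satisfied for all sufficiently large $t_0$, with the threshold expressed entirely in terms of the four intersection numbers on the right-hand side — hence only in terms of $d,k,\ov{\mc{L}},\ov{\mc{M}}$. The only real conceptual obstacle is seeing that the strategy works at all: the target ratio $(L^k\cdot M^{d-k})/(M^d)$ is strictly larger than $\lim_{t_0\to\infty}\bigl(\ov{\mc{L}}^k \cdot \ov{\mc{M}}(t_0)^{d+1-k}\bigr)/\bigl(\ov{\mc{M}}(t_0)^{d+1}\bigr)=\frac{d+1-k}{d+1}(L^k\cdot M^{d-k})/(M^d)$, so a finite constant-metric perturbation $t_0$ already suffices, and no sharpening of Corollary~\ref{Siu3} itself is required.
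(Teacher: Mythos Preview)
Your proposal is correct and follows essentially the same approach as the paper: apply Corollary~\ref{Siu3} to $g^*\ov{\mc{L}}$ and $g^*\ov{\mc{M}}(t)$, use the projection formula to see that the resulting coefficient is independent of $g$, expand numerator and denominator in $t$, and choose $t_0$ large enough. Your write-up is in fact slightly more detailed than the paper's, explicitly verifying the hypotheses of Corollary~\ref{Siu3} and computing the limiting ratio $\frac{d+1-k}{d+1}\cdot\frac{(L^k\cdot M^{d-k})}{(M^d)}$ to explain why a finite $t_0$ suffices.
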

\begin{proof}
	By applying Corollary \ref{Siu3} to $g^*\ov{\mc{L}}$ and $g^*\ov{\mc{M}}(t)$, we have
	\begin{equation}
		\begin{aligned}
			g^*\ov{\mc{L}}^k &\leq
			(d+2-k)^k\frac{\big(g^*\ov{\mc{L}}^k\cdot g^*\ov{\mc{M}}(t)^{d+1-k}\big)}{\big(g^*\ov{\mc{M}}(t)^{d+1}\big)}\cdot g^*\ov{\mc{M}}(t)^{k}\\
			&=
			(d+2-k)^k\frac{\big(\ov{\mc{L}}^k\cdot\ov{\mc{M}}(t)^{d+1-k}\big)}{\big(\ov{\mc{M}}(t)^{d+1}\big)}\cdot g^*\ov{\mc{M}}(t)^{k} \\
			&=(d+2-k)^k\frac{\big(\ov{\mc{L}}^k\cdot\ov{\mc{M}}^{d+1-k}\big)+(d+1-k)t\big(L^k\cdot M^{d-k}\big)}{\big(\ov{\mc{M}}^{d+1}\big)+(d+1)t\big(M^{d}\big)}\cdot g^*\ov{\mc{M}}(t)^{k}
		\end{aligned}
	\end{equation}
	We take $t_0$ sufficiently large such that
	\[\frac{\big(\ov{\mc{L}}^k\cdot\ov{\mc{M}}^{d+1-k}\big)+(d+1-k)t_0\big(L^k\cdot M^{d-k}\big)}{\big(\ov{\mc{M}}^{d+1}\big)+(d+1)t_0\big(M^{d}\big)}<\frac{\big(L^k\cdot M^{d-k}\big)}{\big(M^{d}\big)}.\]
	Thus we obtain
	\[\frac{\big(\ov{\mc{L}}^k\cdot\ov{\mc{M}}^{d+1-k}\big)+(d+1-k)t_0\big(L^k\cdot M^{d-k}\big)}{\big(\ov{\mc{M}}^{d+1}\big)+(d+1)t_0\big(M^{d}\big)}\cdot g^*\ov{\mc{M}}(t_0)^{k}\leq \frac{\big(L^k\cdot M^{d-k}\big)}{\big(M^{d}\big)}\cdot g^*\ov{\mc{M}}(t_0)^{k},\]
	which completes the proof.
\end{proof}

\begin{thm}\label{RelaDeg2}
	Let $f$ be a dominant rational self-map of X. Then for any integer $1\leq k\leq d$,
	one has
	\[\alpha_k(f)=\max\{\lambda_k(f),\lambda_{k-1}(f)\}.\]
\end{thm}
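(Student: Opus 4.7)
The plan is to establish both inequalities $\alpha_k(f) \geq \max\{\lambda_k(f), \lambda_{k-1}(f)\}$ and $\alpha_k(f) \leq \max\{\lambda_k(f), \lambda_{k-1}(f)\}$.

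\textbf{Lower bound.} My goal is to produce $n$-uniform constants $c_1, c_2 > 0$ with $\widehat{\deg}_k(F^n) \geq c_1 \deg_k(f^n)$ and $\widehat{\deg}_k(F^n) \geq c_2 \deg_{k-1}(f^n)$; extracting $n$-th roots then yields $\alpha_k(f) \geq \lambda_k(f)$ and $\alpha_k(f) \geq \lambda_{k-1}(f)$. After replacing $\overline{\mathcal{H}}$ by a sufficient power (which does not affect the limits), the arithmetic bigness of the ample $\overline{\mathcal{H}}$ provides a global section $s$ with $\|s\|_{\sup} \leq e^{-\epsilon}$ for some $\epsilon > 0$. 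Applying the arithmetic intersection formula to $\pi_{n,1}^*s$, viewed as a section of one of the $d+1-k$ copies of $\pi_{n,1}^*\overline{\mathcal{H}}$ in the expansion of $\widehat{\deg}_k(F^n)$,
\[
\widehat{\deg}_k(F^n) = \bigl(\pi_{n,2}^*\overline{\mathcal{H}}^k \cdot \pi_{n,1}^*\overline{\mathcal{H}}^{d-k} \bigm| \pi_{n,1}^*\di(s)\bigr) + \int_{\Gamma_{F^n}(\mathbb{C})} (-\log\|\pi_{n,1}^*s\|^2) \, c_1(\pi_{n,2}^*\overline{\mathcal{H}})^k \, c_1(\pi_{n,1}^*\overline{\mathcal{H}})^{d-k}.
\]
The first summand is nonnegative as an arithmetic intersection of nef Hermitian bundles on the effective horizontal divisor $\pi_{n,1}^*\di(s)$, and the integrand is $\geq 2\epsilon$ pointwise, while the positive $(d,d)$-current integrated has total mass $\deg_k(f^n)$, so the integral is at least $2\epsilon \deg_k(f^n)$. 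Pulling back $s$ via $\pi_{n,2}$ instead of $\pi_{n,1}$ (so the integrated current becomes $c_1(\pi_{n,2}^*\overline{\mathcal{H}})^{k-1} c_1(\pi_{n,1}^*\overline{\mathcal{H}})^{d+1-k}$, of total mass $\deg_{k-1}(f^n)$) yields the companion bound.

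\textbf{Upper bound.} I aim at the dual estimate $\widehat{\deg}_k(F^n) \leq C_1 \deg_k(f^n) + C_2 \deg_{k-1}(f^n)$ with $C_1, C_2$ independent of $n$; passing to $n$-th roots and using the existence of $\lambda_k(f), \lambda_{k-1}(f)$ then delivers $\alpha_k(f) \leq \max\{\lambda_k(f), \lambda_{k-1}(f)\}$. The central tool is Lemma \ref{Siu4}, whose distinctive feature is that its correction constant $t_0$ depends only on $d, k, \overline{\mathcal{L}}, \overline{\mathcal{M}}$ and is uniform in the auxiliary arithmetic variety $\mathcal{Y}$ and morphism $g : \mathcal{Y} \to \mathcal{X}$. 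The accompanying identity
\[
\bigl(\pi_{n,2}^*\overline{\mathcal{H}}(t)^k \cdot \pi_{n,1}^*\overline{\mathcal{H}}(t)^{d+1-k}\bigr) = \widehat{\deg}_k(F^n) + t \bigl((d+1-k)\deg_k(f^n) + k \deg_{k-1}(f^n)\bigr),
\]
which follows from $\overline{\mathcal{H}}(t) = \overline{\mathcal{H}} + \overline{\mathcal{O}}(t)$ together with the vanishing $\overline{\mathcal{O}}(t)^2 = 0$ in $\widehat{\mr{CH}}^{\geq 2}$, provides the key two-term decomposition: since its left-hand side is the $k$-th arithmetic degree computed with respect to the equivalent ample Hermitian bundle $\overline{\mathcal{H}}(t)$ --- sharing the same asymptotic rate $\alpha_k(f)$ by the invariance established after Theorem \ref{Submulti2} --- this isolates the arithmetic contribution from the geometric degrees $\deg_k(f^n), \deg_{k-1}(f^n)$.

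\textbf{Main obstacle.} The hardest step is engineering an application of Lemma \ref{Siu4} that delivers a nontrivial upper bound with $n$-uniform constants. A direct use on $\Gamma_{F^n}$ with $\overline{\mathcal{L}} = \pi_{n,2}^*\overline{\mathcal{H}}$ and $\overline{\mathcal{M}} = \pi_{n,1}^*\overline{\mathcal{H}}$ forces the correction $t_n$ to grow like $\widehat{\deg}_k(F^n)/\deg_k(f^n)$, which may diverge in the regime $\lambda_k(f) < \lambda_{k-1}(f)$ where $\alpha_k(f) = \lambda_{k-1}(f)$; while applying the lemma on $\mathcal{X}$ with $\overline{\mathcal{L}} = \overline{\mathcal{M}} = \overline{\mathcal{H}}$ and $g = \pi_{n,2}$ yields a tautology of the form $\widehat{\deg}_k(F^n) \leq (d+2-k)^k\bigl(\widehat{\deg}_k(F^n) + k t_0 \deg_{k-1}(f^n)\bigr)$ in which the $\widehat{\deg}_k$ term cannot be absorbed. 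The resolution should combine applications of Lemma \ref{Siu4} in both ``directions'' (swapping the roles of $\pi_{n,1}^*\overline{\mathcal{H}}$ and $\pi_{n,2}^*\overline{\mathcal{H}}$ as the big Hermitian bundle) so that the tautological terms cancel, arranging the combinatorial factor $(d+2-k)^k$ to be absorbed into the geometric intersection numbers rather than back into $\widehat{\deg}_k(F^n)$ itself.
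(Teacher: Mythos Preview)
Your lower bound is correct and slightly more direct than the paper's: you expand one copy of $\pi_{n,1}^*\overline{\mathcal{H}}$ (resp.\ $\pi_{n,2}^*\overline{\mathcal{H}}$) via a strictly small section, whereas the paper obtains the same $n$-uniform constants by applying Yuan's arithmetic Siu inequality to $\overline{\mathcal{O}}_{\mathcal{X}}(t)$ against $\overline{\mathcal{H}}$ and then intersecting. Both routes give $\widehat{\deg}_k(F^n)\ge c\,\deg_k(f^n)$ and $\widehat{\deg}_k(F^n)\ge c\,\deg_{k-1}(f^n)$.

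The upper bound, however, has a genuine gap. You correctly diagnose why a direct use of Lemma~\ref{Siu4} fails: placing $\overline{\mathcal{L}},\overline{\mathcal{M}}$ on $\Gamma_{F^n}$ makes $t_0$ depend on $n$, while taking $\overline{\mathcal{L}}=\overline{\mathcal{M}}=\overline{\mathcal{H}}$ on $\mathcal{X}$ with $g=\pi_{n,2}$ is tautological. But your proposed resolution---combining applications ``in both directions'' so that tautological terms cancel---does not lead anywhere concrete, and the target $\widehat{\deg}_k(F^n)\le C_1\deg_k(f^n)+C_2\deg_{k-1}(f^n)$ with $n$-uniform $C_i$ is likely stronger than what the lemma alone can deliver.

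The missing idea is a \emph{recursive composition} argument. Apply Lemma~\ref{Siu4} on the \emph{fixed} arithmetic variety $\Gamma_F$ (the graph of a single $F$, not $F^n$), with $\overline{\mathcal{L}}=\pi_4^*\overline{\mathcal{H}}$, $\overline{\mathcal{M}}=\pi_3^*\overline{\mathcal{H}}$, and let the auxiliary morphism be $g=v\colon\Gamma\to\Gamma_F$ coming from an arbitrary composition $F\circ G$ as in the proof of Theorem~\ref{Submulti2}. Now $t_0$ is independent of $G$, and intersecting against $u^*\pi_1^*\overline{\mathcal{H}}^{d+1-k}$ (using $\pi_3\circ v=\pi_2\circ u$) yields
\[
\widehat{\deg}_k(F\circ G)\le C_1\deg_k(f)\bigl(\widehat{\deg}_k(G)+kt_0\deg_{k-1}(g)\bigr).
\]
Coupling this with the geometric submultiplicativity $\deg_{k-1}(f\circ g)\le C_2\deg_{k-1}(f)\deg_{k-1}(g)$ gives a $2\times 2$ upper-triangular matrix inequality $U(F^n)\le M(f)\,U(F^{n-1})$ with eigenvalues $C_1\deg_k(f)$ and $C_2\deg_{k-1}(f)$. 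Iterate, replace $f$ by $f^r$, take $r$-th roots, and let $n\to\infty$ then $r\to\infty$; the constants disappear and you obtain $\alpha_k(f)\le\max\{\lambda_k(f),\lambda_{k-1}(f)\}$.
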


\begin{proof}
For the lower bound $\alpha_k(f)\geq \max\{\lambda_k(f),\lambda_{k-1}(f)\}$, \cite[Theorem B]{dang_higher_2022} proved it by arithmetic Bertini theorem. Here we will give a different proof.

We take an integral model $\mc{X}\ra \Spe \mb{Z}$ of $X$ and fix an ample Hermitian line bundle $\ov{\mc{H}}$ on $\mc{X}$. The rational self-map $f$ is lifted to a dominant rational self-map $F$ of $\mathcal{X}$. Let $\Gamma_F$ be the normalization of the graph of $F$ in $\mc{X}\times_{\Spe \mb{Z}}\mc{X}$, and $\pi_1,\pi_2$ be projections from $\Gamma_F$ onto the first and second component respectively. Fix a real number $t>0$.  By \cite[Theorem 2.2]{yuan_big_2008}
\[\pi_1^*\ov{\mO}_{\mc{X}}(t)\leq (d+1)\frac{\big(\pi_1^*\ov{\mO}_{\mc{X}}(t)\cdot \pi_1^*\ov{\mc{H}}^d\big)}{\big(\pi_1^*\ov{\mc{H}}^{d+1}\big)}\cdot\pi_1^*\ov{\mc{H}}=(d+1)\frac{t\cdot\big(H^d\big)}{\big(\ov{\mc{H}}^{d+1}\big)}\cdot\pi_1^*\ov{\mc{H}}.\]
(This is an equivalent formulation of the original theorem, and the proof follows the same arguments as in the proof of Corollary \ref{Siu1cor}.)
By intersecting both sides of the inequality with $\pi_2^*\ov{\mc{H}}^k\cdot \pi_1^*\ov{\mc{H}}^{d-k}$, we obtain
\begin{equation}
	\begin{aligned}
		\widehat{\deg}_k(F)&=\big(\pi_2^*\ov{\mc{H}}^k\cdot \pi_1^*\ov{\mc{H}}^{d+1-k}\big)\\
		&\geq \big(\pi_2^*\ov{\mc{H}}^k\cdot \pi_1^*\ov{\mc{H}}^{d-k}\cdot \pi_1^*\ov{\mO}_{\mc{X}}(t)\big)\cdot\frac{ \big(\ov{\mc{H}}^{d+1}\big)}{(d+1)\cdot t\cdot\big(H^d\big)}\\
		&=C\cdot\big(\pi_2^*H^k\cdot \pi_1^*H^{d-k}\big)\\
		&=C\cdot \deg_k(f),
	\end{aligned}
\end{equation}
where the constant $C=\frac{ \big(\ov{\mc{H}}^{d+1}\big)}{(d+1)(H^d)}$.

Similarly, we also have
\[\pi_2^*\ov{\mO}_{\mc{X}}(t)\leq (d+1)\frac{\big(\pi_2^*\ov{\mO}_{\mc{X}}(t)\cdot \pi_2^*\ov{\mc{H}}^d\big)}{\big(\pi_2^*\ov{\mc{H}}^{d+1}\big)}\cdot\pi_2^*\ov{\mc{H}}=(d+1)\frac{t\cdot\big(H^d\big)}{\big(\ov{\mc{H}}^{d+1}\big)}\cdot\pi_2^*\ov{\mc{H}}\]
by Yuan's inequality. If we intersect both sides of this inequality with $\pi_2^*\ov{\mc{H}}^{k-1}\cdot \pi_1^*\ov{\mc{H}}^{d+1-k}$, we obtain
\begin{equation}
	\begin{aligned}
		\widehat{\deg}_k(F)&=\big(\pi_2^*\ov{\mc{H}}^k\cdot \pi_1^*\ov{\mc{H}}^{d+1-k}\big)\\
		&\geq \big(\pi_2^*\ov{\mc{H}}^{k-1}\cdot \pi_1^*\ov{\mc{H}}^{d+1-k}\cdot \pi_2^*\ov{\mO}_{\mc{X}}(t)\big)\cdot\frac{ \big(\ov{\mc{H}}^{d+1}\big)}{(d+1)\cdot t\cdot\big(H^d\big)}\\
		&=C\cdot\big(\pi_2^*H^{k-1}\cdot \pi_1^*H^{d+1-k}\big)\\
		&=C\cdot \deg_{k-1}(f),
	\end{aligned}
\end{equation}
This implies $\alpha_k(f)\geq \max\{\lambda_k(f),\lambda_{k-1}(f)\}$.

Now we prove the converse inequality. Consider the same diagram as in Theorem \ref{Submulti2}.
     \[\begin{tikzcd}
	&                                                   & \Gamma \arrow[ld, "u"'] \arrow[rd, "v"]   &                                                   &                    \\
	& \Gamma_f \arrow[ld, "\pi_1"'] \arrow[rd, "\pi_2"] &                                           & \Gamma_g \arrow[ld, "\pi_3"'] \arrow[rd, "\pi_4"] &                    \\
	\mc{X} \arrow[rrd] \arrow[rr, "G"', dashed] &                                                   & \mc{X} \arrow[d] \arrow[rr, "F"', dashed] &                                                   & \mc{X} \arrow[lld] \\
	&                                                   & \Spe \mb{Z}                              &                                                   &                   
\end{tikzcd}\]
For a fixed $F$, by Lemma \ref{Siu4}, there exists a constant $t_0>0$ such that for any $G$,
\[ v^*\pi_4^*\ov{\mc{H}}^k\leq
(d+2-k)^k\frac{\big(\pi_4^*H^k\cdot \pi_3^*H^{d-k}\big)}{\big(H^{d}\big)}\cdot v^*\pi_3^*\ov{\mc{H}}(t_0)^{k}.\]
Intersect the inequality above with $u^*\pi_1^*\ov{\mc{H}}^{d+1-k}$, then
\begin{equation}
\begin{aligned}
	\widehat{\deg}_k(F\circ G)=& \big(u^*\pi_1^*\ov{\mc{H}}^{d+1-k}\cdot v^*\pi_4^*\ov{\mc{H}}^k\big)\\
	\leq &(d+2-k)^k\frac{\big(\pi_4^*H^k\cdot \pi_3^*H^{d-k}\big)}{\big(H^{d}\big)}\cdot \big(u^*\pi_1^*\ov{\mc{H}}^{d+1-k}\cdot v^*\pi_3^*\ov{\mc{H}}(t_0)^{k}\big)\\
	=&C_1\deg_k(f) \big(\big(u^*\pi_1^*\ov{\mc{H}}^{d+1-k}\cdot v^*\pi_3^*\ov{\mc{H}}^k\big)+kt_0(u^*\pi_1^*H^{d+1-k}\cdot v^*\pi_3^*H^{k-1})\big)\\
	=& C_1\deg_k(f) \big(\big(u^*\pi_1^*\ov{\mc{H}}^{d+1-k}\cdot u^*\pi_2^*\ov{\mc{H}}^k\big)+kt_0(u^*\pi_1^*H^{d+1-k}\cdot u^*\pi_2^*H^{k-1})\big)\\
	=& C_1\deg_k(f) \big(\widehat{\deg}_k(G)+kt_0\deg_{k-1}(g)\big)
\end{aligned}
\end{equation}
where $C_1=\frac{(d+2-k)^k}{(H^{d})}$ is a constant that depends only on $d, k$ and $H$. The third line follows from (\ref{cruial}).

We also have the submultiplicity formula, which is proved in \cite[Theorem 1]{dang_degrees_2020},
\[\deg_{k-1}(f\circ g)\leq C_2 \deg_{k-1}(f)\cdot \deg_{k-1}(g).\]
The constant $C_2$ also depends only on $d, k$ and $H$.

We construct the matrices
\begin{equation}
U(F)=\begin{bmatrix} \widehat{\deg}_k(F)  \\ \deg_{k-1}(f)  \end{bmatrix},\quad M(f)=\begin{bmatrix} C_1\deg_k(f) & C_1k t_0 \deg_k(f) \\ 0 & C_2\deg_{k-1}(f)\end{bmatrix}.
\end{equation}
By setting $G=F^{n-1}$, we have
\[U(F^{n})\leq M(f)\cdot U(F^{n-1}),\]
then
\[U(F^{n})\leq M(f)^{n-1}\cdot U(F).\]
Here the inequality $v\leq v'$ between two vectors means $v_i\leq v'_{i}$ for for all coordinates $i$.

Consider the norm on $\mb{R}^2$ defined by
\[\|(v_1,v_2)^{\top}\|_{\infty}:=\max\{|v_1|,|v_2|\},\]
and for $M=(a_{ij})_{1\leq i,j\leq 2}\in M_2(\mb{R})$, define 
\[\|M\|_{\mr{max}}:=\max\{|a_{ij}|,1\leq i,j\leq 2\}.\]
With this notation, we observe that
\[\widehat{\deg}_{k}(F^n)\leq\|U(F^n)\|_{\infty}\leq \| M(f)^{n-1}\cdot U(F)\|_{\infty}\leq 2\|M(f)^{n-1}\|_{\mr{max}}\|U(F)\|_{\infty}.\]

We replace $F$ by $F^r$ in the inequality above and take $nr$-th root, then

\[\widehat{\deg}_{k}(F^{nr})^{1/nr}\leq 2^{1/nr}\|U(F^r)\|_{\infty}^{1/nr}\|M(f^r)^{n-1}\|_{\mr{max}}^{1/nr}.\]

Note that the eigenvalues of $M(f^r)$ are $C_1\deg_k(f^r)$ and $C_2\deg_{k-1}(f^r)$. Let $n\ra \infty$, we obtain

\[\alpha_k(f)=\lim_{n\ra\infty}\widehat{\deg}_{k}(F^{nr})^{1/nr}\leq \max\{C_1\deg_k(f^r),C_2\deg_{k-1}(f^r)\}^{1/r}.\]

Then, by letting $r\ra\infty$, we conclude the proof and obtain
\[\alpha_k(f)\leq \max\{\lambda_k(f),\lambda_{k-1}(f)\}.\]
\end{proof}

By directly applying Theorem \ref{RelaDeg2}, one can establish the birational invariance of $\alpha_k(f)$.

\begin{cor}\label{birinv}
Let $f:X\dashrightarrow X$ and $g:Y\dashrightarrow Y$ be rational self-maps of $X$ and $Y$. Let $\pi:X\dashrightarrow Y$ be a rational map with the diagram below.
\[\begin{tikzcd}
	X \arrow[r, "f", dashed] \arrow[d, "\pi", dashed] & X \arrow[d, "\pi", dashed] \\
	Y \arrow[r, "g", dashed]                          & Y                         
\end{tikzcd}.\]
Then for $1\leq k\leq \min\{\dim X, \dim Y\}$,
\begin{itemize}
	\item if $\pi$ is dominant, we have $\alpha_k(f)\geq \alpha_k(g)$;
	\item if $\pi$ is generically finite, we have $\alpha_k(f)\leq \alpha_k(g)$.
\end{itemize}
In particular, if $\pi$ is a generically finite dominant morphism, then $\alpha_k(f)=\alpha_k(g)$.
\end{cor}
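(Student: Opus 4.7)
The plan is to reduce Corollary \ref{birinv} to the corresponding comparison of geometric dynamical degrees and then invoke the relative degree formula just proved. Applied to $f$ and $g$, Theorem \ref{RelaDeg2} gives
\[\alpha_k(f)=\max\{\lambda_k(f),\lambda_{k-1}(f)\},\qquad \alpha_k(g)=\max\{\lambda_k(g),\lambda_{k-1}(g)\}.\]
So it suffices to establish the two analogous inequalities at the level of geometric dynamical degrees for every admissible $j\in\{k-1,k\}$: namely $\lambda_j(f)\ge\lambda_j(g)$ if $\pi$ is dominant, and $\lambda_j(f)\le\lambda_j(g)$ if $\pi$ is generically finite.

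These are the classical comparisons of dynamical degrees under semi-conjugacy. In the dominant case, I would pass to a common normal projective model $W$ resolving the graphs of $\pi$, $f^n$ and $g^n$; on such a model $\pi\circ f^n=g^n\circ\pi$ holds as morphisms, and, using the nefness of $\pi^*H_Y$ on $X$ together with its numerical domination by a multiple of $H_X$, the projection formula bounds the intersection number on $Y$ computing $\deg_j(g^n)$ by a constant multiple of the analogous intersection number on $X$ computing $\deg_j(f^n)$; taking $n$-th roots yields $\lambda_j(g)\le\lambda_j(f)$. The generically finite case is symmetric and uses the identity $\pi_*\pi^*=\deg(\pi)\cdot\mathrm{id}$ on numerical classes to reverse the inequality. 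Both comparisons are contained in \cite{dinh_comparison_2011} and \cite{truong_relative_2020}, and can alternatively be obtained by a direct geometric adaptation of Section \ref{SecRel} with Theorem \ref{Dangresult} replacing Theorem \ref{Siu2}.

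Combining these inequalities with the maximum expressions above produces $\alpha_k(f)\ge\alpha_k(g)$ when $\pi$ is dominant and $\alpha_k(f)\le\alpha_k(g)$ when $\pi$ is generically finite; the last assertion is then the conjunction of both cases. The substance of the proof is concentrated in Theorem \ref{RelaDeg2}, which carries all of the arithmetic information, and the only potentially delicate point is verifying $\pi\circ f^n=g^n\circ\pi$ after resolving all indeterminacies so that the projection-formula manipulations are legitimate. This, however, is a routine birational bookkeeping exercise, so no genuine obstacle is expected.
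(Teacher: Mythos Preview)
Your proposal is correct and follows essentially the same approach as the paper: apply Theorem \ref{RelaDeg2} to write $\alpha_k=\max\{\lambda_k,\lambda_{k-1}\}$, then invoke the classical comparison of geometric dynamical degrees under semi-conjugacy. The paper cites \cite[Lemma 2.4]{meng_kawaguchi-silverman_2019} for this comparison rather than \cite{dinh_comparison_2011} or \cite{truong_relative_2020}, but the argument is otherwise identical.
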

\begin{proof}
We have the following classical result about dynamical degrees (see e.g., \cite[Lemma 2.4]{meng_kawaguchi-silverman_2022}). 
For $1\leq k\leq \min\{\dim X, \dim Y\}$,
\begin{itemize}
	\item if $\pi$ is dominant, then $\lambda_k(f)\geq \lambda_k(g)$;
	\item if $\pi$ is a generically finite morphism, then $\lambda_k(f)\leq \lambda_k(g)$.
\end{itemize}
By the formula $\alpha_k(f)=\max\{\lambda_k(f),\lambda_{k-1}(f)\}$, we get the conclusion.
\end{proof}

\section{Upper bounds of arithmetic degrees}\label{SecUpBd}
In this section, We will show that arithmetic degrees for subvarieties are independent of the choice of model $\mc{X}$ and Hermitian line bundle $\ov{\mc{H}}$. Additionally, we prove the inequality presented in Theorem \ref{UpBdd2}, which provides an upper bound for the arithmetic degrees of subvarieties. Our approach to proving this inequality closely follows the strategy of Xie in \cite{xie_remarks_2023}. Specifically, we first construct small sections such that the subvariety appears as an irreducible component of the intersection of these sections. Subsequently, we derive the inequality by applying an arithmetic analogue of \cite[Lemma 3.3]{jia_endomorphisms_2021}.

Let us begin by recalling the definition of arithmetic degrees for subvarieties.

Consider a normal projective variety $X$ over $\mathbb{Q}$ of dimension $d$, and let $f: X\dashrightarrow X$ be a dominant rational self-map of $X$. We assume that $\mathcal{X}$ is an integral model of $X$, which can be viewed as an arithmetic variety $\mathcal{X}\rightarrow \Spe \mathbb{Z}$ with generic fiber $X$. The rational self-map $f$ can be extended to a dominant rational self-map $F: \mathcal{X}\dashrightarrow \mathcal{X}$. Thus, we obtain a commutative diagram as described in Section \ref{Intro}.
 \[\begin{tikzcd}
	& \Gamma_F \arrow[ld, "\pi_1"'] \arrow[rd, "\pi_2"] &   \\
	\mc{X} \arrow[rr, "F", dashed] &                                                 & \mc{X}
\end{tikzcd}.\]
To define the arithmetic degree of subvarieties, we fix an ample Hermitian line bundle $\ov{\mc{H}}$ on $X$. Suppose $H=\mc{H}_{\mb{Q}}$. The definition proceeds as follows.

\begin{definition}\label{def2}
	Let $V$ be a $k$-dimensional irreducible subvariety of $X$ whose $f$-orbit is well-defined and $\mc{V}$ denote the Zariski closure of $V$ in $\mc{X}$. We define
	\[\deg_k(f,V)=\big(\pi_2^*H^k\cdot \pi_1^{\#}V\big)\]
	\[\widehat{\deg}_{k+1}(F,\mc{V})=\big(\pi_2^*\ov{\mc{H}}^{k+1}\cdot \pi_1^{\#}\mc{V}\big).\]
	Here the pull-back of a subvariety by a birational map means the strict transform.
	
	The \emph{geometric degree} of $(f,V)$ is defined as 
	\[\ov{\lambda}_k(f,V)=\limsup_{n\ra \infty}\deg_k(f^n,V)^{1/n},\]
	\[\underline{\lambda}_k(f,V)=\liminf_{n\ra \infty}\deg_k(f^n,V)^{1/n}.\]
	If $\ov{\lambda}_k(f,V)=\underline{\lambda}_k(f,V)$, we define 
	\[\lambda_k(f,V)=\ov{\lambda}_k(f,V)=\lim_{n\ra \infty}\deg_k(f^n,V)^{1/n}.\]
	
	The \emph{arithmetic degree} of $(f,V)$ is defined as 
	\[\ov{\alpha}_{k+1}(f,V)=\limsup_{n\ra\infty}\widehat{\deg}_{k+1}(F^n,\mc{V})^{1/n},\]
	\[\underline{\alpha}_{k+1}(f,V)=\liminf_{n\ra\infty}\widehat{\deg}_{k+1}(F^n,\mc{V})^{1/n}.\]
	If $\ov{\alpha}_{k+1}(f,V)=\underline{\alpha}_{k+1}(f,V)$, we define 
	\[\alpha_{k+1}(f,V)=\ov{\alpha}_{k+1}(f,V)=\lim_{n\ra\infty}\widehat{\deg}_{k+1}(F^n,\mc{V})^{1/n}.\]
\end{definition} 

The existence of the limit defining $\alpha_{k+1}(f, V)$ is a part of the high-dimensional analogue of Kawaguchi-Silverman conjecture, which will be presented in Section \ref{SecFur}. Our task now is to demonstrate the well-definedness of arithmetic degrees. Specifically, we aim to show that the arithmetic degrees of a subvariety are independent of the choice of integral models or the ample Hermitian line bundles.
\begin{lem}\label{Nakai}
	Let $\mc{X}\ra \Spe \mb{Z}$ be an arithmetic variety of relative dimension $d$ with generic fiber $X$. Then there exists a constant $C_1>0$ such that for any dominant rational self-map $f:X\dashrightarrow X$ and any subvariety $V$ of dimension $k$ of $X$ whose $f$-orbit is well-defined, we have
	\[\deg_k(f,V)\leq C_1 \cdot \widehat{\deg}_{k+1}(F,\mc{V}),\]
	where $F:\mc{X}\dashrightarrow \mc{X}$ is a rational self-map extending $f$ and $\mc{V}$ is the Zariski closure of $V$ in $\mc{X}$.
\end{lem}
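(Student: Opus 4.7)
The plan is to deduce the inequality by feeding Yuan's arithmetic Siu inequality, pulled back from $\mc{X}$ to $\Gamma_F$, against the effective cycle $\pi_1^{\#}\mc{V}$, so as to convert a class-level comparison into the desired numerical bound between $\deg_k(f,V)$ and $\widehat{\deg}_{k+1}(F,\mc{V})$. The constant $C_1$ will come out to $(d+1)(H^d)/(\ov{\mc{H}}^{d+1})$, which depends only on the fixed model $\mc{X}$ and the Hermitian line bundle $\ov{\mc{H}}$, and in particular is independent of $f$ and $V$.

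Concretely, by Yuan's theorem in the reformulation already used in the proof of Theorem \ref{RelaDeg2}, for any $t>0$ and any $\lambda>(d+1)t(H^d)/(\ov{\mc{H}}^{d+1})$ one has the class inequality $\ov{\mO}_{\mc{X}}(t)\leq \lambda\,\ov{\mc{H}}$ in $\widehat{\mr{CH}}^1(\mc{X})$. Pulling back along the generically finite morphism $\pi_2$ yields $\pi_2^*\ov{\mO}_{\mc{X}}(t)\leq \lambda\,\pi_2^*\ov{\mc{H}}$ on $\Gamma_F$. I then intersect both sides with $\pi_2^*\ov{\mc{H}}^k\cdot\pi_1^{\#}\mc{V}$. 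The right-hand side becomes $\lambda\,\widehat{\deg}_{k+1}(F,\mc{V})$. For the left-hand side I invoke the identity
\[\big(\ov{\mc{L}}_1\cdots\ov{\mc{L}}_n\cdot\ov{\mO}(t)\big)=t\,(L_1\cdots L_n)\]
applied on the arithmetic variety $\pi_1^{\#}\mc{V}$ of relative dimension $k$, exactly as used repeatedly in Section \ref{SecRel}; this produces $t\cdot\deg_k(f,V)$. Thus $t\,\deg_k(f,V)\leq \lambda\,\widehat{\deg}_{k+1}(F,\mc{V})$, and setting $t=1$ and letting $\lambda$ decrease to its infimum yields the lemma with $C_1=(d+1)(H^d)/(\ov{\mc{H}}^{d+1})$.

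The main obstacle I anticipate is justifying the intersection of the class inequality with the \emph{effective cycle} $\pi_1^{\#}\mc{V}$, and not just against nef Hermitian line bundles, since the paper's notion of $\geq 0$ on $\widehat{\mr{CH}}^1$ is tested only against nef line bundles. To close this gap I would exploit the stronger output of Yuan's theorem: $\lambda\,\ov{\mc{H}}-\ov{\mO}_{\mc{X}}(t)$ is big rather than merely pseudo-effective. Using Theorem \ref{ArithDe} (or arithmetic Hilbert-Samuel) to produce, for $N$ large, a nonzero small section $s\in \widehat{H}^0(\mc{X},N(\lambda\,\ov{\mc{H}}-\ov{\mO}_{\mc{X}}(t)))$ whose horizontal divisor does not contain the image of $\pi_1^{\#}\mc{V}$ under $\pi_2$, the effective arithmetic divisor $\pi_2^*\widehat{\di}(s)$ pairs nonnegatively with $\pi_2^*\ov{\mc{H}}^k\cdot\pi_1^{\#}\mc{V}$: the geometric part is an intersection of an effective divisor with an effective cycle against a nef class, while the archimedean part is the integral of the nonnegative function $-\log\|s\|\circ\pi_2$ against the semipositive product of Chern forms $c_1(\pi_2^*\ov{\mc{H}})^k$. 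Dividing by $N$ recovers the numerical inequality and completes the argument.
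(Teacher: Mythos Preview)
Your approach is correct and, once you reach your own ``obstacle'' paragraph, it collapses to essentially the same computation as the paper's. Both arguments produce a small section $s$ of some $N\ov{\mc{H}}(-t)$ whose divisor does not contain $\mc{Y}=\pi_{2,*}\pi_1^{\#}\mc{V}$, and then expand
\[
\big(\ov{\mc{H}}^{k}\cdot(N\ov{\mc{H}}-\ov{\mO}_{\mc{X}}(t))\cdot\mc{Y}\big)=\big(\ov{\mc{H}}^{k}\cdot\di(s)\cdot\mc{Y}\big)-\int_{\mc{Y}(\mb{C})}\log\|s\|\,c_1(\ov{\mc{H}})^{k}\geq 0
\]
to extract $N\,\widehat{\deg}_{k+1}(F,\mc{V})\geq t\,\deg_k(f,V)$.

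The only genuine difference is in how the section is obtained. The paper invokes the arithmetic Nakai--Moishezon theorem directly: since $\ov{\mc{H}}$ is ample, for $N\gg 0$ there is a full $\mb{Z}$-basis of $H^0(\mc{X},N\mc{H})$ consisting of sections small for $N\ov{\mc{H}}+\ov{\mO}_{\mc{X}}(-t)$, and from a basis one can always pick a member not vanishing on any prescribed proper closed subset. You instead detour through Yuan's bigness theorem and then appeal to Theorem~\ref{ArithDe} to locate a suitable section. This works, but be aware that Theorem~\ref{ArithDe} is stated for \emph{ample} Hermitian line bundles, not merely big ones, and ``arithmetic Hilbert--Samuel'' alone does not let you avoid a prescribed subvariety either; you should remark that $\lambda\ov{\mc{H}}-\ov{\mO}_{\mc{X}}(t)$ is in fact arithmetically ample once the integer $\lambda$ is large enough (curvature and relative ampleness are inherited from $\ov{\mc{H}}$, and the small-section condition again reduces to Nakai--Moishezon for $\ov{\mc{H}}$). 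With that adjustment your argument is complete, and it has the pleasant by-product of making the constant explicit, $C_1=(d+1)(H^d)/(\ov{\mc{H}}^{d+1})$, whereas the paper's $C_1=N/t$ hides $N$ behind a Nakai--Moishezon threshold.
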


\begin{proof}
	Consider an ample Hermitian line bundle $\ov{\mc{H}}$ on $\mc{X}$ with generic fiber $H$.  For a fixed positive number $t>0$, by applying the arithmetic Nakai-Moishezon theorem (see \cite[Corollary 4.8]{zhang_positive_1995} and \cite[Corollary 5.1]{moriwaki_semiample_2015}), we obtain a $\mb{Z}$-basis of $H^0(\mc{X},N\mc{H})$ contained in $\widehat{H}^0(\mc{X},N\ov{\mc{H}}+\ov{\mO}_{\mc{X}}(-t))$ for $N$ sufficiently large.

	Let $\Gamma_F$ denote the normalization of the graph of $F$, and $\pi_1$ and $\pi_2$ denote the projections onto the first and second coordinates, respectively. Denote $\pi_{2,*}\pi_1^{\#}\mc{V}$ by $\mc{Y}$, which is horizontal. Note that $H^0(X,NH)=H^0(\mc{X},N\mc{H})\otimes_{\mb{Z}}\mb{Q}$, and 
	sections whose support contains $\mc{Y}_{\mb{Q}}$ form a proper subspace of $H^0(X,NH)$ for sufficient large $N$. Consequently, we can select a small section $s\in \widehat{H}^0(\mc{X},N\ov{\mc{H}}+\ov{\mO}_{\mc{X}}(-t))$ from the $\mb{Z}$-basis such that $\di(s)$ and $\mc{Y}$ intersect properly. Then
	\[\big(\ov{\mc{H}}^{k}\cdot (N\ov{\mc{H}}+\ov{\mO}_{\mc{X}}(-t))\cdot \mc{Y}\big)=\big(\ov{\mc{H}}^{k}\cdot \di(s)\cdot \mc{Y}\big)-\int_{\mc{Y}(\mb{C})}\log\|s\|c_1(\ov{\mc{H}})^{k}\geq 0.\]
	Here $\|\cdot\|$ is the metric associated with $N\ov{\mc{H}}+\ov{\mO}_{\mc{X}}(-t)$ and the condition $\log \|s\|<0$ follows from $s$ being a small section, see Section \ref{Notation} for details. It follows that 
	\[N\widehat{\deg}_{k+1}(F,\mc{V})\geq -\big(\ov{\mc{H}}^{k}\cdot\ov{\mO}_{\mc{X}}(-t)\cdot \mc{Y}\big)=t\cdot \deg_k(f,V).\]
	We can take $C_1=\frac{N}{t}>0$, which completes the proof.
\end{proof}

\begin{prop}\label{welldefined}
	The arithmetic degrees $\ov{\alpha}_{k+1}(f,V)$ and $\underline{\alpha}_{k+1}(f,V)$ are well-defined. In other words, they are independent of the choice of the integral model or the ample Hermitian line bundle $\ov{\mc{H}}$. 
\end{prop}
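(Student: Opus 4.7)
The plan is to establish independence in two stages, paralleling the well-definedness proof for $\alpha_k(f)$ given in the corollary after Theorem \ref{Submulti2}: Stage 1 frees the choice of ample Hermitian line bundle on a fixed model, and Stage 2 absorbs a change of integral model through a common dominating model.

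\emph{Stage 1 (fixed model, varying the ample bundle).} Let $\overline{\mathcal{H}}, \overline{\mathcal{H}}'$ be two ample Hermitian line bundles on a fixed model $\mathcal{X}$. By arithmetic ampleness, there is an integer $m\geq 1$ such that both $m\overline{\mathcal{H}} - \overline{\mathcal{H}}'$ and $m\overline{\mathcal{H}}' - \overline{\mathcal{H}}$ are nef. Using the identity
\[
(m\overline{\mathcal{H}})^{k+1} - \overline{\mathcal{H}}'^{k+1} = (m\overline{\mathcal{H}} - \overline{\mathcal{H}}')\cdot \sum_{i=0}^k (m\overline{\mathcal{H}})^{i}\cdot\overline{\mathcal{H}}'^{k-i},
\]
the pullback by $\pi_2$ writes the difference of the $(k+1)$-th powers as a sum of products of nef Hermitian line bundles on $\Gamma_{F^n}$. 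Intersection with the effective cycle $\pi_1^{\#}\mathcal{V}$ is nonnegative by standard arithmetic positivity, giving
\[
\widehat{\deg}^{\overline{\mathcal{H}}'}_{k+1}(F^n,\mathcal{V}) \leq m^{k+1}\widehat{\deg}^{\overline{\mathcal{H}}}_{k+1}(F^n,\mathcal{V}),
\]
with the symmetric reverse. Taking $n$-th roots and passing to $\limsup$ or $\liminf$ makes the factor $m^{(k+1)/n}\to 1$, so the two bundles yield identical $\overline{\alpha}_{k+1}$ and $\underline{\alpha}_{k+1}$.

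\emph{Stage 2 (varying the integral model).} Given two pairs $(\mathcal{X}_i, \overline{\mathcal{H}}_i)$, form $\mathcal{X}_3$ as the Zariski closure of the diagonal image of $X$ in $\mathcal{X}_1\times_{\Spe\mathbb{Z}}\mathcal{X}_2$, with birational projections $p_i:\mathcal{X}_3\to\mathcal{X}_i$. Then $\overline{\mathcal{A}} := p_1^*\overline{\mathcal{H}}_1 + p_2^*\overline{\mathcal{H}}_2$ is the restriction of the ample bundle $\overline{\mathcal{H}}_1\boxtimes\overline{\mathcal{H}}_2$ from the product, hence ample on $\mathcal{X}_3$; by Stage 1 I may use $\overline{\mathcal{A}}$ to compute arithmetic degrees on $\mathcal{X}_3$. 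Let $q_i:\Gamma_{F_3^n}\to\Gamma_{F_i^n}$ be the induced birational morphism on normalized graphs. The projection formula combined with $q_{i,*}\pi_{1,3}^{\#}\mathcal{V}_3 = \pi_{1,i}^{\#}\mathcal{V}_i$ (both are the unique irreducible subvarieties mapping birationally onto $\mathcal{V}_i$, identified by $q_i$) yields
\[
\bigl(\pi_{2,3}^*(p_i^*\overline{\mathcal{H}}_i)^{k+1}\cdot\pi_{1,3}^{\#}\mathcal{V}_3\bigr) = \widehat{\deg}^{\overline{\mathcal{H}}_i}_{k+1}(F_i^n,\mathcal{V}_i).
\]
Expanding $\overline{\mathcal{A}}^{k+1}$ by the binomial theorem,
\[
\widehat{\deg}^{\overline{\mathcal{A}}}_{k+1}(F_3^n,\mathcal{V}_3) = \sum_{j=0}^{k+1}\binom{k+1}{j}T_j, \quad T_j := \bigl(\pi_{2,3}^*(p_1^*\overline{\mathcal{H}}_1)^j\cdot\pi_{2,3}^*(p_2^*\overline{\mathcal{H}}_2)^{k+1-j}\cdot\pi_{1,3}^{\#}\mathcal{V}_3\bigr),
\]
with $T_0$ and $T_{k+1}$ equal to the arithmetic degrees on $\mathcal{X}_2$ and $\mathcal{X}_1$. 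Nonnegativity of each $T_j$ immediately gives the one-sided bounds $\overline{\alpha}_{k+1}^{\overline{\mathcal{H}}_i} \leq \overline{\alpha}_{k+1}^{\overline{\mathcal{A}}}$ and similarly for $\underline{\alpha}_{k+1}$.

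The reverse bound requires each mixed term $T_j$ ($0<j<k+1$) to be at most a constant, independent of $n$, times one of the pure terms $T_0$ or $T_{k+1}$. I would iterate Corollary \ref{Siu1cor} on $\Gamma_{F_3^n}$ with $\overline{\mathcal{L}} = \pi_{2,3}^*p_2^*\overline{\mathcal{H}}_2$, $\overline{\mathcal{M}} = \pi_{2,3}^*p_1^*\overline{\mathcal{H}}_1$, and the remaining line-bundle factors playing the role of the nef auxiliaries $\overline{\mathcal{N}}_\bullet$: each invocation swaps one factor of $p_2^*\overline{\mathcal{H}}_2$ for $p_1^*\overline{\mathcal{H}}_1$ at the cost of the fixed ratio $(d+1-k)\frac{(\overline{\mathcal{M}}^{d-k}\overline{\mathcal{L}}\overline{\mathcal{N}}_\bullet)}{(\overline{\mathcal{M}}^{d+1-k}\overline{\mathcal{N}}_\bullet)}$ of intersection numbers on $\Gamma_{F_3^n}$, and iterating $k+1-j$ times converts $T_j$ into a constant times $T_{k+1}$. \emph{The main obstacle} is that Corollary \ref{Siu1cor} supplies its inequality in the nef-pairing sense in $\widehat{\mr{CH}}^{k+1}$, whereas the final pairing is against the effective (and not a priori complete-intersection) cycle $\pi_{1,3}^{\#}\mathcal{V}_3$. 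One resolves this by implementing the Siu step one codimension at a time, as in Lemma \ref{Siu1}, using the Demailly-approximation sections of Theorem \ref{ArithDe} to replace each nef factor by an effective divisor meeting $\pi_{1,3}^{\#}\mathcal{V}_3$ properly, so that every Siu inequality is taken against a genuinely effective cycle where the standard arithmetic positivity applies.
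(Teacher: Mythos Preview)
Your Stage~1 is correct and clean: arithmetic ampleness of $\overline{\mathcal{H}}$ makes $m\overline{\mathcal{H}}-\overline{\mathcal{H}}'$ nef for large $m$, and the telescoping factorization then pairs nonnegatively with the effective cycle $\pi_1^{\#}\mathcal{V}$ by restriction.

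The genuine gap is in Stage~2, precisely at the obstacle you flag. Corollary~\ref{Siu1cor} and Lemma~\ref{Siu1} yield inequalities in $\widehat{\mr{CH}}^{k+1}$ only in the sense of pairing with products of \emph{nef} Hermitian line bundles, and your Demailly workaround does not upgrade this to pairing with the effective cycle $\pi_{1,3}^{\#}\mathcal{V}_3$. Trace the induction in Lemma~\ref{Siu1}: after cutting by Demailly sections of the $\overline{\mathcal{N}}_\bullet$, the base case $r=0$ appeals to Yuan's theorem to conclude that $\overline{\mathcal{M}}-\overline{\mathcal{L}}$ is \emph{big} on the residual variety. But a big Hermitian line bundle need not have nonnegative arithmetic degree on every horizontal $1$-cycle (think of an exceptional curve), so the final pairing against the $1$-cycle $\pi_{1,3}^{\#}\mathcal{V}_3\cap\mathcal{Y}_\bullet$ is unjustified. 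If instead you restrict Siu directly to $\mathcal{W}=\pi_{1,3}^{\#}\mathcal{V}_3$, the ratios in Corollary~\ref{Siu1cor} become intersection numbers on $\mathcal{W}$, which varies with $n$; the resulting bound $T_j\le C_n T_{k+1}$ has $C_n$ depending on $T_k/T_{k+1}$, with no a~priori uniform control. Neither route gives an $n$-independent constant.

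The paper avoids Siu altogether here. It works on $\mathcal{X}_3$ rather than on the graphs, using only that $CH_1-H_2$ is ample and globally generated on the generic fiber $X$: lift a basis of $H^0(X,CH_1-H_2)$ to integral sections of $Cp_1^*\mathcal{H}_1-p_2^*\mathcal{H}_2$, then twist the metric by $\overline{\mathcal{O}}(t)$ to make them small. Choosing such a section meeting $\pi_{3,2,*}\pi_{3,1}^{\#}\mathcal{V}_3$ properly and expanding $C^{k+1}p_1^*\overline{\mathcal{H}}_1^{k+1}-p_2^*\overline{\mathcal{H}}_2^{k+1}$ telescopically yields
\[
\widehat{\deg}_{k+1,\overline{\mathcal{H}}_2}(F_2,\mathcal{V}_2)\ \le\ C^{k+1}\,\widehat{\deg}_{k+1,\overline{\mathcal{H}}_1}(F_1,\mathcal{V}_1)\ +\ t(k+1)C^k\,\deg_{k,H_1}(f,V).
\]
The geometric defect is then absorbed by Lemma~\ref{Nakai}, which gives $\deg_{k}(f,V)\le C_1\,\widehat{\deg}_{k+1}(F,\mathcal{V})$ with $C_1$ independent of $f$. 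The missing ingredient in your argument is exactly this: positivity against $\pi_1^{\#}\mathcal{V}$ comes not from nefness of a difference of pullbacks (which fails since $p_i^*\overline{\mathcal{H}}_i$ is only nef on $\mathcal{X}_3$), but from explicit small sections, at the cost of a controllable geometric error handled by Lemma~\ref{Nakai}.
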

\begin{proof}
	Let $\mc{X}_1$ and $\mc{X}_2$ be two integral models of $X$, and $\ov{\mc{H}}_1,\ov{\mc{H}}_2$ be ample Hermitian line bundles on $\mc{X}_1$ and $\mc{X}_2$ respectively. Consider the diagram below
	\[\begin{tikzcd}
		X \arrow[rdd, bend right] \arrow[rrd, bend left] \arrow[rd, "\Delta"] &                                                         &                    \\
		& \mc{X}_1\times_{\Spe\mb{Z}}\mc{X}_2 \arrow[d] \arrow[r] & \mc{X}_2 \arrow[d] \\
		& \mc{X}_1 \arrow[r]                                      & \Spe\mb{Z}        
	\end{tikzcd}.\]
	The map $\Delta$ is the diagonal embedding of $X$ in the generic fiber of $\mc{X}_1\times_{\Spe\mb{Z}}\mc{X}_2$, which is $X\times_{\Spe\mb{Q}}X$. Let $\mc{X}_3$ be the Zariski closure of $\Delta(X)$ in $\mc{X}_1\times_{\Spe\mb{Z}}\mc{X}_2$ with the reduced subscheme structure. Since $X$ is irreducible , $\mc{X}_3$ is irreducible. It follows that $\mc{X}_3$ is an integral model dominating both $\mc{X}_1$ and $\mc{X}_2$. Denote the projections from $\mc{X}_3$ to $\mc{X}_1$ and $\mc{X}_2$ by $p_1,p_2$, which are birational. The rational self-map $f$ of $X$ can be lifted to rational self-maps $F_i: \mc{X}_i\dashrightarrow \mc{X}_i, i=1,2,3$. Let $\Gamma_{F_i}$ be the normalization of the graph of $F_i$.
	
	By the universal properties of graphs, there are dominant rational maps $\wt{p}_1:\Gamma_{F_3}\dashrightarrow \Gamma_{F_1}$ and $\wt{p}_2:\Gamma_{F_3}\dashrightarrow \Gamma_{F_2}$. Moreover, $\wt{p}_1,\wt{p}_2$ are, in fact, morphisms. To see this, note that $\wt{p}_1$ is represented by a morphism between an open subset of $\Gamma_{F_3}$ and $\Gamma_{F_1}$, and this map is compatible with the morphism $p_1\times p_1: \mc{X}_3\times_{\Spe \mb{Z}}\mc{X}_3\ra \mc{X}_1\times_{\Spe \mb{Z}}\mc{X}_1$. Consequently, $p_1\times p_1$ maps $\Gamma_{F_3}$ into $\Gamma_{F_1}$, which implies that $\wt{p}_1$ is a morphism. A similar argument shows that $\wt{p}_2$ is also a morphism.

	Then we have the following diagram, which is commutative.
	\[\begin{tikzcd}
		& \Gamma_{F_1} \arrow[ld, "{\pi_{1,1}}"'] \arrow[rd, "{\pi_{1,2}}"]                                                &                                               \\
		\mc{X}_1 \arrow[rr, "F_1"' near end, dashed]                                      &                                                                                                                  & \mc{X}_1                                      \\
		& \Gamma_{F_3} \arrow[ld, "{\pi_{3,1}}"'] \arrow[rd, "{\pi_{3,2}}"] \arrow[uu, "\wt{p}_1" near start] \arrow[dd, "\wt{p}_2"' near end] &                                               \\
		\mc{X}_3 \arrow[uu, "p_1"] \arrow[dd, "p_2"'] \arrow[rr, "F_3"' near end, dashed] &                                                                                                                  & \mc{X}_3 \arrow[dd, "p_2"] \arrow[uu, "p_1"'] \\
		& \Gamma_{F_2} \arrow[ld, "{\pi_{2,1}}"'] \arrow[rd, "{\pi_{2,2}}"]                                                &                                               \\
		\mc{X}_2 \arrow[rr, "F_2"', dashed]                                      &                                                                                                                  & \mc{X}_2                                     
	\end{tikzcd}\]
	For a subvariety $V$ of $X$ of dimension $k$ which is not contained in the indeterminacy locus $I(f)$, denote the Zariski closure of $V$ in $\mc{X}_i$ by $\mc{V}_i$. Then by projection formula \cite[Proposition 2.3.1(iv)]{bost_heights_1994}, we have
\[\big(\pi_{3,2}^*p_1^*\ov{\mc{H}}_1^{k+1}\cdot \pi_{3,1}^{\#}\mc{V}_3\big)=\big(\wt{p}_1^*\pi_{1,2}^*\ov{\mc{H}}_1^{k+1}\cdot \pi_{3,1}^{\#}\mc{V}_3\big)=\big( \pi_{1,2}^*\ov{\mc{H}}_1^{k+1}\cdot \wt{p}_{1,*}\pi_{3,1}^{\#}\mc{V}_3\big).\]	

Since $\pi_{3,1}$ is a birational map, it induces an isomorphism between an open dense subset of $\Gamma_{F_1}$ and an open dense subset of $\mc{X}_1$ that contains $X\backslash I(f)$. A similar result holds for $\pi_{1,1}$ Moreover, $p_1$ and $\wt{p}_1$ are isomorphisms on their generic fibers. Since $V\not\subset I(f)$ and $\mc{V}_3$ is the Zariski closure of $V$ in $\mc{X}_3$, the generic point of $\mc{V}_3$ lies within the the open subset mentioned above. Therefore, we obtain the relation
\[\wt{p}_{1,*}\pi_{3,1}^{\#}\mc{V}_3=\pi_{1,1}^{\#}p_{1,*}\mc{V}_3.\]
It follows that
\[\big( \pi_{1,2}^*\ov{\mc{H}}_1^{k+1}\cdot \wt{p}_{1,*}\pi_{3,1}^{\#}\mc{V}_3\big)=\big( \pi_{1,2}^*\ov{\mc{H}}_1^{k+1}\cdot \pi_{1,1}^{\#}p_{1,*}\mc{V}_3\big)=\big( \pi_{1,2}^*\ov{\mc{H}}_1^{k+1}\cdot \pi_{1,1}^{\#}\mc{V}_1\big).\]
The last term is just $\widehat{\deg}_{k+1,\ov{\mc{H}}_1}(F_1,\mc{V}_1)$, which implies
	\begin{equation}\label{wd-51}
\big(\pi_{3,2}^*p_1^*\ov{\mc{H}}_1^{k+1}\cdot \pi_{3,1}^{\#}\mc{V}_3\big)=\widehat{\deg}_{k+1,\ov{\mc{H}}_1}(F_1,\mc{V}_1)
	\end{equation}
 Similarly, we have
	\begin{equation}\label{wd-52}
		\big(\pi_{3,2}^*p_2^*\ov{\mc{H}}_2^{k+1}\cdot \pi_{3,1}^{\#}\mc{V}_3\big)=\big( \pi_{2,2}^*\ov{\mc{H}}_2^{k+1}\cdot \pi_{2,1}^{\#}\mc{V}_2\big)=\widehat{\deg}_{k+1,\ov{\mc{H}}_2}(F_2,\mc{V}_2)
	\end{equation} 
	We take an integer $C$ sufficiently large such that $CH_1-H_2$ is ample and globally generated. Then we can take a $\mb{Q}$-basis $s_1,s_2,\ldots, s_r$ of $H^0(X,CH_1-H_2)$. Since
	\[H^0(\mc{X}_3,Cp_1^*\mc{H}_1-p_2^*\mc{H}_2)\otimes_{\mb{Z}}\mb{Q}=H^0(X,CH_1-H_2),\] then $s_i$ can be lifted to some $\wt{s}_i\in H^0(\mc{X}_3,Cp_1^*\mc{H}_1-p_2^*\mc{H}_2)$ after replacing $s_i$ by $m\cdot s_i$ for some integer $m$.  Consider a positive number $t$ such that $e^t>\max\limits_{1\leq i\leq n}\{\|\wt{s}_i\|_{\mr{sup}}\}$, then 
	\[\wt{s}_i\in \widehat{H}^0(\mc{X}_3,(Cp_1^*\ov{\mc{H}}_1-p_2^*\ov{\mc{H}}_2)(t))\]
	for all $1\leq i\leq r$. We denote the metric on $(Cp_1^*\ov{\mc{H}}_1-p_2^*\ov{\mc{H}}_2)(t)$ by $\|\cdot\|_t=e^{-t}\|\cdot\|$. It follows that $\|\wt{s}_i\|_t<1, i=1,2,\ldots,r$. 
	
	We have
	\begin{equation}
		\begin{aligned}
			&\big(\pi_{3,2}^*(C^{k+1}p_1^*\ov{\mc{H}}_1^{k+1}-p_2^*\ov{\mc{H}}_2^{k+1})\cdot \pi_{3,1}^{\#}\mc{V}_3\big)\\
			=&\big((C^{k+1}p_1^*\ov{\mc{H}}_1^{k+1}-p_2^*\ov{\mc{H}}_2^{k+1})\cdot \pi_{3,2,*}\pi_{3,1}^{\#}\mc{V}_3\big)\\
			=&\sum_{i=0}^{k}\big(C^{k-i}p_1^*\ov{\mc{H}}_1^{k-i}\cdot p_2^*\ov{\mc{H}}_2^i\cdot (Cp_1^*\ov{\mc{H}}_1-p_2^*\ov{\mc{H}}_2)\cdot \pi_{3,2,*}\pi_{3,1}^{\#}\mc{V}_3\big)\\
			=&\sum_{i=0}^{k}\big(C^{k-i}p_1^*\ov{\mc{H}}_1^{k-i}\cdot p_2^*\ov{\mc{H}}_2^i\cdot (Cp_1^*\ov{\mc{H}}_1-p_2^*\ov{\mc{H}}_2)(t)\cdot \pi_{3,2,*}\pi_{3,1}^{\#}\mc{V}_3\big)-t\sum_{i=0}^{k}\big(C^{k-i}H_1^{k-i}\cdot H_2^i\cdot \pi_{2,*}\pi_{1}^{\#}V\big)
		\end{aligned}
	\end{equation}
	The reason for the last equality is that $(p_1^*\mc{H}_1)_{\mb{Q}}=H_1, (p_2^*\mc{H}_2)_{\mb{Q}}=H_2$, and
	\[\big(C^{k-i}p_1^*\ov{\mc{H}}_1^{k-i}\cdot p_2^*\ov{\mc{H}}_2^i\cdot \ov{\mc{O}}_{\mc{X}_3}(t)\cdot \pi_{3,2,*}\pi_{3,1}^{\#}\mc{V}_3\big)=t\cdot\big(C^{k-i}H_1^{k-i}\cdot H_2^i\cdot \pi_{2,*}\pi_{1}^{\#}V\big)\]
	
	The cycle $\pi_{2,*}\pi_1^{\#}V$ is irreducible, and $CH_1-H_2$ is globally generated. Consequently, the global sections of $CH_1-H_2$ whose support contains $\pi_{2,*}\pi_1^{\#}V$ form an proper subspace of $H^0(X,CH_1-H_2)$. Thus, there exists a section $s_j$ whose support does not contain $\pi_{2,*}\pi_1^{\#}V$. Denote the cycle $\pi_{3,2,*}\pi_{3,1}^{\#}\mc{V}_3$ by $\mc{Y}$,  taking multiplicities into account. Then $\di(\wt{s}_j)$ and $\mc{Y}$ intersect properly. We have
	\begin{equation}
		\begin{aligned}
			&\big(C^{k-i}p_1^*\ov{\mc{H}}_1^{k-i}\cdot p_2^*\ov{\mc{H}}_2^i\cdot (Cp_1^*\ov{\mc{H}}_1-p_2^*\ov{\mc{H}}_2)(t)\cdot \pi_{3,2,*}\pi_{3,1}^{\#}\mc{V}_3\big)\\
			=&\big(C^{k-i}p_1^*\ov{\mc{H}}_1^{k-i}\cdot p_2^*\ov{\mc{H}}_2^i\cdot \di(\wt{s}_j)\cdot \mc{Y}\big)-\int_{\mc{Y}(\mb{C})}\log\|\wt{s}_j\|_tC^{k-i}c_1(p_1^*\ov{\mc{H}}_1)^{k-i}\cdot c_1(p_2^*\ov{\mc{H}}_2)^i\\
			\geq & 0
		\end{aligned}
	\end{equation}
	The reason for the last line is that $\wt{s}_j$ is a small section of $(Cp_1^*\ov{\mc{H}}_1-p_2^*\ov{\mc{H}}_2)(t)$. We also have 
	\[\big(C^{k-i}H_1^{k-i}\cdot H_2^i\cdot \pi_{2,*}\pi_{1}^{\#}V\big)\leq C^k\big(H_1^k\cdot \pi_{2,*}\pi_{1}^{\#}V\big)\]
	since $CH_1-H_2$ is ample. Thus 
	\[\big(\pi_{3,2}^*(C^{k+1}p_1^*\ov{\mc{H}}_1^{k+1}-p_2^*\ov{\mc{H}}_2^{k+1})\cdot \pi_{3,1}^{\#}\mc{V}_3\big)\geq -t\sum_{i=0}^{k}\big(C^{k-i}H_1^{k-i}\cdot H_2^i\cdot \pi_{2,*}\pi_{1}^{\#}V\big)\geq -t(k+1)C^k\big(H_1^k\cdot \pi_{2,*}\pi_{1}^{\#}V\big).\]
	Combining this inequality with (\ref{wd-51}) and (\ref{wd-52}), we have 
	\begin{equation}
		\begin{aligned}
			\widehat{\deg}_{k+1,\ov{\mc{H}}_2}(F_2,\mc{V}_2)&\leq C^{k+1}\widehat{\deg}_{k+1,\ov{\mc{H}}_1}(F_1,\mc{V}_1)+t(k+1)C^k\deg_{k,H_1}(f,V)\\
			&\leq C^k\big(C+t(k+1)C_1\big)\cdot \widehat{\deg}_{k+1,\ov{\mc{H}}_1}(F_1,\mc{V}_1).
		\end{aligned}
	\end{equation}
 	The second inequality is obtained by Lemma \ref{Nakai}. 
 	
 	Replacing the rational map $f$ by $f^n$, then
			\[\widehat{\deg}_{k+1,\ov{\mc{H}}_2}(F_2^n,\mc{V}_2)^{1/n}\leq C^{k/n}\big(C+t(k+1)C_1\big)^{1/n}\cdot \widehat{\deg}_{k+1,\ov{\mc{H}}_1}(F_1^n,\mc{V}_1)^{1/n}.\]
	Let $n\ra \infty$, we have
	\[\limsup_{n\ra \infty}\widehat{\deg}_{k+1,\ov{\mc{H}}_2}(F_2^n,\mc{V}_2)^{1/n}\leq \limsup_{n\ra \infty}\widehat{\deg}_{k+1,\ov{\mc{H}}_1}(F_1^n,\mc{V}_1)^{1/n}.\]
	Similarly, we have
	\[\limsup_{n\ra \infty}\widehat{\deg}_{k+1,\ov{\mc{H}}_1}(F_1^n,\mc{V}_1)^{1/n}\leq \limsup_{n\ra \infty}\widehat{\deg}_{k+1,\ov{\mc{H}}_2}(F_2^n,\mc{V}_2)^{1/n}.\]
	
	This implies the arithmetic degree $\ov{\alpha}_{k+1}(f,V)$ is well-defined. By the same approach, the arithmetic degree $\underline{\alpha}_{k+1}(f,V)$ is well-defined. 
\end{proof}

Let $L$ be an ample line bundle on $X$, and let $\mc{X}$ be an integral model of $X$. Suppose $\ov{\mc{L}}$ is a nef Hermitian line bundle that extends $L$. Then the height of a subvariety $V$ is defined by the following formula.
\[h_{\ov{\mc{L}}}(V)=\frac{\big(\ov{\mc{L}}^{\dim V+1}\cdot \mc{V}\big)}{(\dim V+1)(L^{\dim V}\cdot V)},\]
where $\mc{V}$ is the Zariski closure of $V$ in $\mc{X}$. 

This height function is a higher dimensional generalization of the classical height function for points. Recall that in \cite[Proposition 3]{kawaguchi_dynamical_2016-1}, Kawaguchi and Silverman established a formula for the growth rate of the height counting function for points in orbits in terms of arithmetic degree. Here, we extend this elementary result to the higher dimensional case. The proof closely follows the approach taken in their paper.

\begin{prop}
	Let $V$ be an irreducible subvariety of dimension $k$ of $X$ whose $f$-orbit is well-defined and infinite. Assume the geometric degree $\lambda_{k}(f,V)$ and arithmetic degree $\alpha_{k+1}(f,V)$ exist, then
	\[\lim_{T\ra \infty}\frac{\{W\in \mO_f(V): h_{\ov{\mc{L}}}(W)\leq T\}}{\log T}=\frac{1}{\log \alpha_{k+1}(f,V)-\log \lambda_{k}(f,V)}.\]
	In particular, when $\alpha_{k+1}(f,V)=\lambda_{k}(f,V)$, the left side converges to $+\infty$.
\end{prop}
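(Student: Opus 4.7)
The plan is to reduce the statement to an asymptotic estimate of $\log h_{\ov{\mc{L}}}(f^n(V))$ in terms of the arithmetic and geometric degrees, then invert this growth rate to obtain the counting asymptotic. The first step is to establish a clean identity relating the Arakelov height of $f^n(V)$ to the intersection numbers on the graph. Working on $\Gamma_{F^n}$, let $e_n$ denote the generic degree of $\pi_2|_{\pi_1^{\#}\mc{V}}$ onto its image, which coincides with the degree of the rational map $f^n|_V$ onto $f^n(V)$. Hence $\pi_{2,*}\pi_1^{\#}\mc{V}=e_n\,\mc{W}_n$, where $\mc{W}_n$ is the Zariski closure of $f^n(V)$ in $\mc{X}$, and the projection formula yields
\[\widehat{\deg}_{k+1}(F^n,\mc{V})=e_n\,\big(\ov{\mc{H}}^{k+1}\cdot\mc{W}_n\big),\qquad \deg_k(f^n,V)=e_n\,\big(H^k\cdot f^n(V)\big).\]
The factor $e_n$ cancels in the ratio, yielding the key identity
\[h_{\ov{\mc{H}}}(f^n(V))=\frac{1}{k+1}\cdot\frac{\widehat{\deg}_{k+1}(F^n,\mc{V})}{\deg_k(f^n,V)}.\]

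Taking logarithms, dividing by $n$, and invoking the hypothesis that the defining limits for $\alpha_{k+1}(f,V)$ and $\lambda_k(f,V)$ exist, one obtains
\[\lim_{n\to\infty}\frac{\log h_{\ov{\mc{H}}}(f^n(V))}{n}=\log\alpha_{k+1}(f,V)-\log\lambda_k(f,V).\]
To pass from $\ov{\mc{H}}$ to the given model $\ov{\mc{L}}$, I would mimic the comparison argument in Proposition \ref{welldefined}: after pulling both bundles back to a common refining arithmetic model and bounding $C\ov{\mc{H}}-\ov{\mc{L}}$ by small sections, the two heights of $f^n(V)$ agree up to a multiplicative factor polynomial in $\deg_k(f^n,V)$, which becomes negligible after dividing $\log h$ by $n$.

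Next I convert the asymptotic into the desired counting statement. Since $\mO_f(V)$ is infinite, no equality $f^a(V)=f^b(V)$ with $a<b$ can occur (otherwise the orbit would be preperiodic and finite), so the counting function equals $\#\{n\geq 0:h_{\ov{\mc{L}}}(f^n(V))\leq T\}$. Write $\Delta=\log\alpha_{k+1}(f,V)-\log\lambda_k(f,V)\geq 0$, where the nonnegativity comes from Lemma \ref{Nakai}. When $\Delta>0$, for each $\varepsilon\in(0,\Delta)$ and $n$ sufficiently large the bound $n(\Delta-\varepsilon)\leq\log h_{\ov{\mc{L}}}(f^n(V))\leq n(\Delta+\varepsilon)$ squeezes the counting function between $\log T/(\Delta+\varepsilon)-O(1)$ and $\log T/(\Delta-\varepsilon)+O(1)$; letting $T\to\infty$ and then $\varepsilon\to 0$ produces the limit $1/\Delta$. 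When $\Delta=0$ the height grows subexponentially, so for every $M>0$ all but finitely many $n\leq M\log T$ satisfy the height bound, forcing the ratio to diverge to $+\infty$, in agreement with the convention $1/0=+\infty$.

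The main obstacle I anticipate is the very first step: securing the clean cancellation of $e_n$ in the ratio. I need to verify carefully that $\pi_{2,*}\pi_1^{\#}\mc{V}=e_n\,\mc{W}_n$ holds as arithmetic cycles on $\mc{X}$, with no extra vertical contribution supported over primes of bad reduction, and that the same integer $e_n$ indeed governs both the arithmetic and geometric degrees. Once this identification is made rigorous, the remainder is a straightforward asymptotic analysis combined with the invariance already established in Proposition \ref{welldefined}.
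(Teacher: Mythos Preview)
Your proposal is correct and follows essentially the same route as the paper: establish $h_{\ov{\mc{L}}}(f^n(V))=\dfrac{\widehat{\deg}_{k+1}(F^n,\mc{V})}{(k+1)\deg_k(f^n,V)}$, deduce $\lim_n h_{\ov{\mc{L}}}(f^n(V))^{1/n}=\alpha_{k+1}(f,V)/\lambda_k(f,V)$, and then run the standard $\varepsilon$-sandwich on the counting function.

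Your anticipated obstacle is not an obstacle. The strict transform $\pi_1^{\#}\mc{V}$ is an integral horizontal cycle on $\Gamma_{F^n}$, so its proper pushforward by $\pi_2$ is by definition $e_n$ times the integral image cycle; no vertical components can appear. Since the generic fiber of that image is $f^n(V)$, the image is exactly $\mc{W}_n$, and the same integer $e_n$ governs the geometric pushforward on the generic fiber. The paper handles this in a single line by passing from $\pi_2(\pi_1^{\#}\mc{V})$ to $\pi_{2,*}\pi_1^{\#}\mc{V}$ in both numerator and denominator and then applying the projection formula. Your extra step comparing $\ov{\mc{H}}$ to $\ov{\mc{L}}$ is a point the paper glosses over, so including it is fine but not strictly new.
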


\begin{proof}
	For the height of a subvariety, we have
	\[h_{\ov{\mc{L}}}(f^n(V))=\frac{\big(\ov{\mc{L}}^{k+1}\cdot \pi_2(\pi_1^{\#}\mc{V})\big)}{(k+1)\big(L^k\cdot \pi_2(\pi_1^{\#}V)\big)}=\frac{\big(\ov{\mc{L}}^{k+1}\cdot \pi_{2,*}\pi_1^{\#}\mc{V}\big)}{(k+1)\big(L^k\cdot \pi_{2,*}\pi_1^{\#}V\big)}=\frac{\big(\pi_2^*\ov{\mc{L}}^{k+1}\cdot \pi_1^{\#}\mc{V}\big)}{(k+1)\big(\pi_2^*L^k\cdot\pi_1^{\#}V\big)}.\]
	Then we have the following formula. 
	\[\lim_{n\ra\infty}h_{\ov{\mc{L}}}(f^n(V))^{1/n}=\lim_{n\ra\infty}\frac{\widehat{\deg}_{k+1}(F^n,\mc{V})^{1/n}}{\deg_{k}(f^n,V)^{1/n}}=\frac{\alpha_{k+1}(f,V)}{\lambda_k(f,V)}.\]
	For every $\varepsilon>0$, there is an $n_0(\varepsilon)$ such that
	\begin{equation}\label{equbd}
		(1-\varepsilon)\frac{\alpha_{k+1}(f,V)}{\lambda_k(f,V)}\leq h_{\ov{\mc{L}}}(f^n(V))^{1/n}\leq (1+\varepsilon)\frac{\alpha_{k+1}(f,V)}{\lambda_k(f,V)}
	\end{equation} 
	for all $n\geq n_0(\varepsilon)$. This implies 
	\[\left\{n\geq n_0 : (1+\varepsilon)\frac{\alpha_{k+1}(f,V)}{\lambda_k(f,V)}\leq T^{1/n}\right\}\subset \left\{n\geq n_0 : h_{\ov{\mc{L}}}(f^n(V))\leq T\right\}.\]
	By counting the number of elements in these sets, we have 
	\[ \frac{\log T}{\log(1+\varepsilon)+\log\alpha_{k+1}(f,V)-\log \lambda_k(f,V)}\leq\#\left\{n\geq 0 : h_{\ov{\mc{L}}}(f^n(V))\leq T\right\}+n_0(\varepsilon)+1.\]
	Dividing by $\log T$ and taking $T\ra \infty$, $\varepsilon\ra 0$ we obtain
	\[\frac{1}{\log\alpha_{k+1}(f,V)-\log \lambda_k(f,V)}\leq \liminf_{T\ra \infty}\frac{\{W\in \mO_f(V): h_{\ov{\mc{L}}}(W)\leq T\}}{\log T}.\]
	If $\alpha_{k+1}(f,V)=\lambda_k(f,V)$, the left side is $+\infty$, then we get the conclusion. Now we assume $\alpha_{k+1}(f,V)>\lambda_k(f,V)$. By (\ref{equbd}), there is an inclusion.
	\[\left\{n\geq n_0 : h_{\ov{\mc{L}}}(f^n(V))\leq T\right\}\subset\left\{n\geq n_0 : (1-\varepsilon)\frac{\alpha_{k+1}(f,V)}{\lambda_k(f,V)}\leq T^{1/n}\right\}.\]
	Then we have
	\[\#\left\{n\geq 0 : h_{\ov{\mc{L}}}(f^n(V))\leq T\right\}\leq \frac{\log T}{\log(1-\varepsilon)+\log\alpha_{k+1}(f,V)-\log \lambda_k(f,V)}+n_0(\varepsilon)+1.\]
	Dividing by $\log T$ and taking $T\ra \infty$, $\varepsilon\ra 0$ we obtain
	\[\limsup_{T\ra \infty}\frac{\{W\in \mO_f(V): h_{\ov{\mc{L}}}(W)\leq T\}}{\log T}\leq \frac{1}{\log\alpha_{k+1}(f,V)-\log \lambda_k(f,V)}.\]
	Hence
	\[\lim_{T\ra \infty}\frac{\{W\in \mO_f(V): h_{\ov{\mc{L}}}(W)\leq T\}}{\log T}=\frac{1}{\log\alpha_{k+1}(f,V)-\log \lambda_k(f,V)}.\]
\end{proof}

Then we will prove a lemma, which is an arithmetic analogue of \cite[Lemma 3.3]{jia_endomorphisms_2021}. This lemma is crucial in the proof of Theorem \ref{UpBdd2}.

\begin{lem}\label{LemPos}
	Let $\mc{X}\ra \Spe\mb{Z}$ be an arithmetic variety of relative dimension $d$. Suppose Hermitian line bundles $\ov{\mc{L}}_1,\ldots,\ov{\mc{L}}_r$ are nef and $\mc{V}$ is a horizontal irreducible subvariety of $\mc{X}$ of codimension $r$. Suppose there exists $r$ small sections $s_1\in \widehat{H}_0(\mc{X},\ov{\mc{L}}_1),\ldots,s_r\in \widehat{H}_0(\mc{X},\ov{\mc{L}}_r)$ such that $\mc{V}$ is an irreducible component of $\di(s_1)\cap \cdots \cap \di(s_r)$, then for any $d+1-r$ nef Hermitian line bundles $\ov{\mc{H}}_1, \ldots, \ov{\mc{H}}_{d+1-r}$, we have 
	\[\big(\ov{\mc{H}}_1 \cdots \ov{\mc{H}}_{d+1-r}\cdot\ov{\mc{L}}_1\cdots\ov{\mc{L}}_r\big)\geq \big(\ov{\mc{H}}_1 \cdots \ov{\mc{H}}_{d+1-r}\cdot\mc{V}\big).\]
\end{lem}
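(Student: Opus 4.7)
The plan is to prove Lemma \ref{LemPos} by induction on $r$, following the same cutting-by-sections strategy used in the proof of Lemma \ref{Siu1}.

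For the base case $r = 1$, since $\mc{V}$ is an irreducible component of $\di(s_1)$ we may write $\di(s_1) = \mc{V} + \mc{W}$ with $\mc{W}$ effective. The standard formula expressing the arithmetic intersection via $s_1$ gives
\[
\big(\ov{\mc{L}}_1 \cdot \ov{\mc{H}}_1 \cdots \ov{\mc{H}}_d\big)
\;=\; \big(\di(s_1) \cdot \ov{\mc{H}}_1 \cdots \ov{\mc{H}}_d\big)
\;-\; \int_{\mc{X}(\mb{C})} \log\|s_1\| \, c_1(\ov{\mc{H}}_1) \cdots c_1(\ov{\mc{H}}_d).
\]
Smallness of $s_1$ makes $-\log\|s_1\|$ pointwise nonnegative, and the wedge of Chern forms of nef Hermitian bundles is a nonnegative top-degree form, so the integral term is a nonnegative contribution. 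The divisor term splits as $\big(\ov{\mc{H}}_1 \cdots \ov{\mc{H}}_d \cdot \mc{V}\big) + \big(\ov{\mc{H}}_1 \cdots \ov{\mc{H}}_d \cdot \mc{W}\big)$, and the $\mc{W}$-contribution is nonnegative by nefness of the $\ov{\mc{H}}_j$, giving the base case.

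For the inductive step ($r \geq 2$), I would peel off $\ov{\mc{L}}_1$ via $s_1$ in exactly the same way to obtain
\[
\big(\ov{\mc{H}}_1 \cdots \ov{\mc{H}}_{d+1-r} \cdot \ov{\mc{L}}_1 \cdots \ov{\mc{L}}_r\big)
\;\geq\; \big(\ov{\mc{H}}_1 \cdots \ov{\mc{H}}_{d+1-r} \cdot \ov{\mc{L}}_2 \cdots \ov{\mc{L}}_r \cdot \di(s_1)\big),
\]
and then select a horizontal irreducible component $\mc{D}$ of $\di(s_1)$ containing $\mc{V}$ (such $\mc{D}$ exists since $\mc{V}$ is horizontal and lies in $\di(s_1)$). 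Every other component of $\di(s_1)$, horizontal or vertical, contributes nonnegatively against the nef bundles, so the right-hand side is bounded below by the intersection against $\mc{D}$ alone. Viewing $\mc{D}$ as an arithmetic variety of relative dimension $d - 1$, the restrictions $s_i|_{\mc{D}}$ for $i \geq 2$ are small sections of the restricted nef Hermitian bundles $\ov{\mc{L}}_i|_{\mc{D}}$, and $\mc{V}$ remains an irreducible component of $\bigcap_{i=2}^{r} \di(s_i|_{\mc{D}})$ of codimension $r - 1$ in $\mc{D}$. Applying the induction hypothesis on $\mc{D}$ then yields
\[
\big(\ov{\mc{L}}_2|_{\mc{D}} \cdots \ov{\mc{L}}_r|_{\mc{D}} \cdot \ov{\mc{H}}_1|_{\mc{D}} \cdots \ov{\mc{H}}_{d+1-r}|_{\mc{D}}\big)
\;\geq\; \big(\ov{\mc{H}}_1|_{\mc{D}} \cdots \ov{\mc{H}}_{d+1-r}|_{\mc{D}} \cdot \mc{V}\big),
\]
which combined with the two previous inequalities and the projection formula completes the induction.

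The main subtlety is the choice of $\mc{D}$: if $\mc{D}$ happens to be contained in $\di(s_i)$ for some $i \geq 2$, then $s_i|_{\mc{D}}$ vanishes identically and the induction hypothesis cannot be applied verbatim. The hypothesis that $\mc{V}$ is a codimension-$r$ component of $\bigcap_i \di(s_i)$ guarantees a suitable $\mc{D}$ is available: at a generic point of $\mc{V}$ one can extract from $s_1, \ldots, s_r$ a local regular sequence $f_1, \ldots, f_r$ cutting out $\mc{V}$, and the component of $\di(s_j)$ whose local equation near $\mc{V}$ is $f_j$ is not contained in any other $\di(s_i)$. After permuting the indices so that this distinguished component belongs to $\di(s_1)$, the restrictions $s_i|_{\mc{D}}$ are nonzero and the induction proceeds. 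Making this local-algebra reordering rigorous is the step I would expect to require the most care.
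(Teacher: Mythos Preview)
Your proof is correct and follows essentially the same strategy as the paper: induction on $r$, with the base case handled via the decomposition $\big(\ov{\mc{L}}_1\cdot\ov{\mc{H}}_1\cdots\ov{\mc{H}}_d\big)=\big(\di(s_1)\cdot\ov{\mc{H}}_1\cdots\ov{\mc{H}}_d\big)-\int\log\|s_1\|\,c_1(\ov{\mc{H}}_1)\cdots c_1(\ov{\mc{H}}_d)$ and nefness, and the inductive step handled by a relabeling argument to peel off one section. The only difference is the order in which you peel: you cut by $s_1$ first, pass to a codimension-one component $\mc{D}$, and invoke the induction hypothesis on $\mc{D}$ with $s_2,\ldots,s_r$; the paper instead applies the induction hypothesis directly on $\mc{X}$ with $s_1,\ldots,s_{r-1}$ (treating $\ov{\mc{L}}_r$ as one of the nef test bundles) to reach a codimension-$(r-1)$ component $\mc{W}$, and then cuts by $s_r$ on $\mc{W}$ as the final step. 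The paper's ordering keeps the induction on the fixed ambient $\mc{X}$ rather than on successively smaller subvarieties, which mildly streamlines the bookkeeping, but the content is the same, and both versions require exactly the relabeling subtlety you flagged.
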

\begin{proof}
	We will prove this lemma by induction. For $r=1$, $\mc{V}$ is an irreducible component of $\di(s_1)$.
	Fix $d$ nef Hermitian line bundles $\ov{\mc{H}}_1,\ldots,\ov{\mc{H}}_{d}$
	, then
	\[\big(\ov{\mc{H}}_1\cdots\ov{\mc{H}}_{d}\cdot \ov{\mc{L}}_1\big)=\ov{\mc{H}}_1\cdots\ov{\mc{H}}_{d}\cdot \di(s_1)-\int_{\mc{X}(\mb{C})}\log\|s_1\|c_1(\ov{\mc{H}}_1)\cdots c_1(\ov{\mc{H}}_d)\geq \ov{\mc{H}}_1\cdots\ov{\mc{H}}_{d}\cdot \mc{V}.\]
	Assume the result holds for $r-1$ sections,  it suffices to prove 
	\[\big(\ov{\mc{H}}_1\cdots\ov{\mc{H}}_{d+1-r}\cdot\ov{\mc{L}}_1\cdots\ov{\mc{L}}_r\big)\geq \big(\ov{\mc{H}}_1\cdots\ov{\mc{H}}_{d+1-r}\cdot \mc{V}\big)\]
	for any fixed nef Hermitian line bundles $\ov{\mc{H}}_1,\ldots,\ov{\mc{H}}_{d+1-r}$. 
	Indeed, after relabeling, there exists an irreducible component $\mc{W}$ of $\bigcap\limits_{i=1}^{r-1}\di(s_i)$ of codimension $r-1$ such that $\mc{W}\not\subset \di(s_r)$ and $\mc{V}\subset \mc{W}\cap \di(s_r)$ is an irreducible component of it. Then by the induction hypothesis
	\begin{equation}
		\begin{aligned}
			\big(\ov{\mc{H}}_1\cdots\ov{\mc{H}}_{d+1-r}\cdot\ov{\mc{L}}_1\cdots\ov{\mc{L}}_r\big)&\geq \big(\ov{\mc{H}}_1\cdots\ov{\mc{H}}_{d+1-r}\cdot\ov{\mc{L}}_r\cdot \mc{W}\big)\\
			&=\big(\ov{\mc{H}}_1\cdots\ov{\mc{H}}_{d+1-r}\cdot \mc{W}\cdot \di(s_r)\big)-\int_{\mc{W}(\mb{C})}\log\|s_r\|c_1(\ov{\mc{H}}_1)\cdots c_1(\ov{\mc{H}}_{d+1-r})\\
			&\geq \big(\ov{\mc{H}}_1\cdots\ov{\mc{H}}_{d+1-r}\cdot \mc{V}\big),
		\end{aligned}
	\end{equation}
	which completes the proof.
\end{proof}

\begin{thm}\label{UpBdd2}
	Let $f: X \dashrightarrow X$ be a dominant rational map, and let $V$ be a $k$-dimensional irreducible subvariety of $X$ whose $f$-orbit is well-defined. Then
	\[\overline{\alpha}_{k+1}(f,V)\leq \alpha_{k+1}(f).\]
\end{thm}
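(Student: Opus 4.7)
The plan is to realize $\mc{V}$ as an irreducible component of the intersection of $d-k$ divisors coming from small sections of some nef Hermitian line bundle, pull these sections back to $\Gamma_{F^n}$, and then apply Lemma \ref{LemPos} to dominate $\widehat{\deg}_{k+1}(F^n,\mc{V})$ by $\widehat{\deg}_{k+1}(F^n)$ plus a geometric correction term. The relative degree formula (Theorem \ref{RelaDeg2}) will then absorb that correction.

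First I would fix $N$ sufficiently large and choose sections $s_1,\ldots,s_{d-k}\in H^0(\mc{X},N\mc{H})$ vanishing on $\mc{V}$ so that $\mc{V}$ is an irreducible component of $\bigcap_i\di(s_i)$; this is possible because for $N\gg 0$ the ideal sheaf $\mathcal{I}_{\mc{V}}\otimes N\mc{H}$ is globally generated, so generic sections in this linear system cut $\mc{V}$ out as a component by a dimension count. Setting $C_0=\max\{1,\|s_1\|_{\mr{sup}},\ldots,\|s_{d-k}\|_{\mr{sup}}\}$, each $s_i$ becomes a small section of the Hermitian line bundle $\ov{\mc{L}}:=N\ov{\mc{H}}+\ov{\mO}(\log C_0)$, which is nef: its curvature agrees with that of $N\ov{\mc{H}}$, and on any arithmetic curve $C$ we have $\widehat{\deg}(\ov{\mc{L}}|_C)=N\widehat{\deg}(\ov{\mc{H}}|_C)+(\log C_0)\deg(C_\mathbb{Q})\geq 0$. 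Pulling back to $\Gamma_{F^n}$, the sections $\pi_1^*s_i$ remain small sections of the nef bundle $\pi_1^*\ov{\mc{L}}$ (pullback preserves both nefness and sup norm), and $\pi_1^\#\mc{V}$ is a horizontal irreducible subvariety of codimension $d-k$ contained in $\bigcap_i\di(\pi_1^*s_i)=\pi_1^{-1}(\bigcap_i\di(s_i))$; since the latter has pure codimension at most $d-k$, $\pi_1^\#\mc{V}$ is an irreducible component of it.

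Applying Lemma \ref{LemPos} on $\Gamma_{F^n}$ with $\ov{\mc{L}}_1=\cdots=\ov{\mc{L}}_{d-k}=\pi_1^*\ov{\mc{L}}$ and with the $k+1$ nef test bundles all taken to be $\pi_2^*\ov{\mc{H}}$ yields
\[\widehat{\deg}_{k+1}(F^n,\mc{V})\leq\big(\pi_2^*\ov{\mc{H}}^{k+1}\cdot(\pi_1^*\ov{\mc{L}})^{d-k}\big).\]
Expanding $\ov{\mc{L}}=N\ov{\mc{H}}+\ov{\mO}(\log C_0)$ binomially, every product containing two or more factors of $\ov{\mO}(t)$ vanishes in arithmetic intersection (trivial underlying bundle, zero curvature), so only the two lowest-order terms survive; using $\big(\ov{\mc{M}}_1\cdots\ov{\mc{M}}_d\cdot\ov{\mO}(t)\big)=t\,(M_1\cdots M_d)$ recalled at the start of Section \ref{SecRel}, this becomes
\[\widehat{\deg}_{k+1}(F^n,\mc{V})\leq N^{d-k}\widehat{\deg}_{k+1}(F^n)+(d-k)N^{d-k-1}\log C_0\cdot\deg_{k+1}(f^n),\]
where $\deg_{k+1}(f^n)=(\pi_2^*H^{k+1}\cdot\pi_1^*H^{d-k-1})$ has $n$-th root tending to $\lambda_{k+1}(f)$. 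Since $N$ and $C_0$ are constants depending only on $\mc{V}$ and $\ov{\mc{H}}$, taking $n$-th roots and $n\to\infty$ gives $\ov{\alpha}_{k+1}(f,V)\leq\max\{\alpha_{k+1}(f),\lambda_{k+1}(f)\}=\alpha_{k+1}(f)$, the last equality from Theorem \ref{RelaDeg2}. The main technical step is the construction of the cutting sections $s_i$ with controlled sup norm in the first paragraph; once these are in place, the rest is a direct application of Lemma \ref{LemPos} together with the vanishing of higher arithmetic powers of $\ov{\mO}(t)$.
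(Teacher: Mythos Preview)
Your proof is correct and follows the same core strategy as the paper's: cut out $\mc{V}$ as a component of the zero locus of $d-k$ small sections and invoke Lemma \ref{LemPos} on $\Gamma_{F^n}$. The one difference is in how the metric twist is handled. Rather than introducing an auxiliary bundle $\ov{\mc{L}}=N\ov{\mc{H}}+\ov{\mO}(\log C_0)$ and expanding, the paper simply replaces $\ov{\mc{H}}$ by $\ov{\mc{H}}^{\otimes r}(c)$ at the outset (legitimate since both $\ov{\alpha}_{k+1}(f,V)$ and $\alpha_{k+1}(f)$ are independent of the choice of $\ov{\mc{H}}$, by Proposition \ref{welldefined} and the corollary to Theorem \ref{Submulti2}); after this replacement the cutting sections are already small sections of $\ov{\mc{H}}$ itself, so Lemma \ref{LemPos} yields $\widehat{\deg}_{k+1}(F^n,\mc{V})\leq\widehat{\deg}_{k+1}(F^n)$ directly, with no $\deg_{k+1}(f^n)$ correction term and hence no appeal to Theorem \ref{RelaDeg2}. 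Your route trades that bundle-swapping trick for the easy inequality $\lambda_{k+1}(f)\leq\alpha_{k+1}(f)$ from the relative degree formula; both arguments are short and valid.
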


\begin{proof}
	
	Firstly, we take an integral model $\mc{X}\ra\Spe\mb{Z}$ of $X$ and fix an ample Hermitian line bundle $\ov{\mc{H}}$ on $\mc{X}$, whose restriction on $X$ is denoted by $H$. The rational self-map $f$ is lifted to $F$, which is a dominant rational self-map of $\mc{X}$. Let $I$ be the ideal sheaf of $V$ on $X$. We can take $r$ sufficiently large such that $H^{\otimes r}\otimes I$ is globally generated. Then we can replace $\ov{\mc{H}}$ by $\ov{\mc{H}}^{\otimes r}$. We take global sections $s_1,s_2,\ldots,s_{d-k}\in H^0(X,H\otimes I)\subset H^0(X,H)$ such that $\dim(\di(s_1)\cap \cdots \cap \di(s_{d-k}))=k$. Note that
	\[H^0(X,H)=H^0(\mc{X},\mc{H})\otimes_{\mb{Z}}\mb{Q},\]
	then after replacing $s_i$ by $m\cdot s_i$ for some integer $m$ sufficiently large, we can assume that all sections $s_1,\ldots,s_{d-k}$ can be lifted to $\wt{s}_1,\ldots,\wt{s}_{d-k}\in H^0(\mc{X},\mc{H}).$
	The cycle $\di(\wt{s}_1)\cap \cdots \cap \di(\wt{s}_{d-k})$ contains $\mc{V}$ as an irreducible component, where $\mc{V}$ is the Zariski closure of $V$ in $\mc{X}$. We can replace $\ov{\mc{H}}$ by $\ov{\mc{H}}(c)$ for $c$ sufficiently large such that $\wt{s}_1,\ldots,\wt{s}_{d-k}$ are small sections of $\ov{\mc{H}}$.
	
	Now, for any $\varepsilon>0$, there exists $C>0$ such that $\widehat{\deg}_{k+1}(F^n)\leq C(\alpha_{k+1}(f)+\varepsilon)^n$ for all $n>0$.
	Consider the normalization of the graph of $F^n$, one get the following diagram:
	\[\begin{tikzcd}
		& \Gamma \arrow[ld, "\pi_1"'] \arrow[rd, "\pi_2"] &   \\
		\mc{X} \arrow[rr, "F^n", dashed] &                                                 & \mc{X}
	\end{tikzcd}.\]
	The strict transformation $\pi_1^{\#}\mc{V}$ is still an irreducible component of $\di(\pi_1^*\wt{s}_1)\cap \cdots \cap \di(\pi_1^*\wt{s}_{d-k})$. Then by Lemma \ref{LemPos},
	\begin{equation}\label{Upbd4.4}
		\begin{aligned}
			\widehat{\deg}_{k+1}(F^n,\mc{V})=\big(\pi_2^*\ov{\mc{H}}^{k+1}\cdot \pi_1^{\#}\mc{V}\big)\leq \big(\pi_2^*\ov{\mc{H}}^{k+1}\cdot \pi_1^*\ov{\mc{H}}^{d-k}\big)\leq C(\alpha_{k+1}(f)+\varepsilon)^n.
		\end{aligned}
	\end{equation}
	Taking $n$-th root in both side of (\ref{Upbd4.4}) and letting $n\ra \infty, \varepsilon\ra 0$, we get the upper bound
	\[\overline{\alpha}_{k+1}(f,V)=\limsup_{n\ra \infty}	\widehat{\deg}_{k+1}(F^n,\mc{V})^{1/n}\leq \alpha_{k+1}(f).\]
\end{proof}

\section{A generalized version of Kawaguchi-Silverman conjecture and its counterexample}\label{SecFur}
In \cite[Conjecture 1.6]{dang_higher_2022}, the authors proposed a conjecture about higher arithmetic degrees, which is a generalized version of Kawaguchi-Silverman conjecture for points. 
\begin{conj}\label{Kawa-Sil}\cite[Conjecture 1.6, Conjecture 1.7]{dang_higher_2022}
Let $X$ be a projective variety over a number field $K$ and $f:X\dashrightarrow X$ be a dominant rational self-map, and let $V$ be an irreducible subvariety of dimension $k$. Suppose the $f$-orbit of $V$ is well-defined. Then
\begin{enumerate}
	\item The limit defining $\alpha_{k+1}(f,V)$ exists.
	\item If $V$ has Zariski dense orbit, then $\alpha_{k+1}(f,V)=\alpha_{k+1}(f)$.
\end{enumerate}	
\end{conj}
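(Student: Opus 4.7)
The abstract signals that Conjecture \ref{Kawa-Sil} is in fact false, so rather than proving it the plan is to construct a counterexample. The natural target is part (2), since part (1), the existence of the limit, is a deep open problem with no clean refutation mechanism in sight; part (2) on the other hand already comes with the precise upper bound of Theorem \ref{UpBdd}, namely
\[
\overline{\alpha}_{k+1}(f,V) \leq \alpha_{k+1}(f) = \max\{\lambda_{k+1}(f),\lambda_k(f)\},
\]
where the last equality is Theorem \ref{RelaDeg}. So what I would look for is a pair $(f,V)$ with $V$ of Zariski dense orbit for which this inequality is strict.

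The key strategic observation is that the two terms in $\max\{\lambda_{k+1}(f),\lambda_k(f)\}$ measure essentially different geometric phenomena, so when $\lambda_{k+1}(f) > \lambda_k(f)$ the conjectural equality would force $\widehat{\deg}_{k+1}(F^n,\mathcal{V})^{1/n}$ to feel the top dynamical degree even though this intersection is computed on a cycle of relative dimension $k$. Heuristically, $\widehat{\deg}_{k+1}(F^n,\mathcal{V})$ encodes how the images $f^n(V)$ spread out and how high their Arakelov heights climb, and both quantities should be governed by the restricted dynamics on $k$-cycles, i.e.\ by $\lambda_k(f)$ together with a height contribution. Thus the search should concentrate on dominant self-maps where $\lambda_{k+1}(f)$ is inflated by a ``direction'' transverse to $V$.

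The cleanest candidate construction is a product $X = Y \times Z$ with a product self-map $f = g \times h$, since Künneth-type decompositions of the Arakelov Chow groups let one compute $\widehat{\deg}_{k+1}(F^n,\mathcal{V})$ explicitly. By tuning the degrees of $g$ and $h$ one can force $\lambda_{k+1}(f) > \lambda_k(f)$ using the standard multiplicativity $\lambda_j(g \times h) = \max_{a+b=j} \lambda_a(g)\lambda_b(h)$. Taking $V$ to be a subvariety transverse to one of the factors (for instance a graph-type section, or a fiber deformed by a section with dense orbit) one can hope to show that the arithmetic growth is captured by the ``slower'' factor and so lies strictly below $\alpha_{k+1}(f)$. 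A plausible testing ground is $X = \mathbb{P}^n \times A$ with $A$ an abelian variety (so the multiplication-by-$N$ map gives a large topological degree), or $X = \mathbb{P}^n \times \mathbb{P}^m$ with a suitable endomorphism pair, with $V$ chosen as the graph of a map with dense orbit.

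The main obstacle is the Zariski density of the orbit of $V$: the easy ways to suppress the arithmetic degree (putting $V$ inside an invariant fibration or inside a proper $f$-invariant subvariety) all destroy the density hypothesis and so do not furnish a counterexample to Conjecture \ref{Kawa-Sil}. The delicate step is therefore to arrange density while still witnessing $\overline{\alpha}_{k+1}(f,V) < \lambda_{k+1}(f)$, which I expect to require a careful generic choice of $V$ inside the product geometry above, together with an explicit computation of $\big(\pi_2^\ast \overline{\mathcal{H}}^{k+1} \cdot \pi_1^\# \mathcal{V}\big)$ via the projection formula and the Künneth formula on $\widehat{\mathrm{CH}}^\ast$. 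Once the asymptotic growth of this intersection number is isolated in terms of the dynamics of $g$ and $h$ separately, one can compare with the product formula for $\alpha_{k+1}(f)$ coming from Theorem \ref{RelaDeg} and exhibit the strict inequality.
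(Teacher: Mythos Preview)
Your overall instinct is right: the target is part (2), and one should look for $V$ with Zariski-dense orbit but $\alpha_{k+1}(f,V)<\alpha_{k+1}(f)$. However, the mechanism you propose does not work, and the paper's construction relies on an idea you do not mention.

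Your heuristic that ``the arithmetic growth should be governed by the slower factor'' is false. Test it on $X=\mathbb{P}^1\times\mathbb{P}^1$, $f=(g,h)$ with $\deg g=a$, $\deg h=b$, and $V=\{p\}\times\mathbb{P}^1$ with $p$ having dense $g$-orbit. Then $\deg_1(f^n,V)=b^n$ only sees $h$, but $\widehat{\deg}_{2}(F^n,\mathcal{V})$ is essentially $b^n$ times the height of $g^n(p)$, and the latter grows like $a^n h_g(p)$. So $\alpha_2(f,V)=ab=\alpha_2(f)$ whenever $h_g(p)>0$, i.e.\ exactly when the orbit is dense. The Arakelov intersection $(\pi_2^*\overline{\mathcal H}^{k+1}\cdot\pi_1^{\#}\mathcal V)$ feels the height growth in \emph{every} direction, not just the geometric spreading of $V$; asymmetry of dynamical degrees alone does not produce a gap. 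The same obstruction kills the graph-type sections you suggest on $\mathbb{P}^n\times\mathbb{P}^m$.

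The paper instead works with a \emph{polarized} endomorphism, where $f^*L=qL$, and uses the canonical adelic line bundle $\overline{L}_f$. In that setting one shows directly (Proposition \ref{egpolarized}) that $\alpha_{k+1}(f,V)=q^{k+1}=\alpha_{k+1}(f)$ if $h_{\overline{L}_f}(V)>0$ and $\alpha_{k+1}(f,V)=q^k$ if $h_{\overline{L}_f}(V)=0$. So the problem becomes: find $V$ with Zariski-dense orbit but canonical height zero. This is precisely the situation of the Ghioca--Tucker--Zhang counterexample to the naive dynamical Manin--Mumford conjecture: on $E\times E$ for a CM elliptic curve $E$ with $\mathrm{End}(E)\cong\mathbb{Z}[i]$, the map $f=([2-i],[2+i])$ is polarized with $q=5$, the diagonal $\Delta$ has dense preperiodic points (hence $h_{\overline{L}_f}(\Delta)=0$ by Zhang's equidistribution inequality), yet $\Delta$ is not preperiodic (hence has dense orbit, being of codimension one). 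The missing idea in your proposal is this reduction to canonical height zero and the link to the GTZ example; K\"unneth computations on $\widehat{\mathrm{CH}}^*$ are not needed and would not, by themselves, produce the counterexample.
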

Unfortunately,  Conjecture \ref{Kawa-Sil}(2) is false. In fact, we will give a counterexample here, which is derived from the counterexample to the old version of dynamical Manin-Mumford conjecture in \cite{ghioca_towards_2011}.

In order to present the counterexample, we first need to introduce the concept of adelic line bundles. The theory of adelic line bundles on projective varieties was developed by Zhang in \cite{zhang_positive_1995} and \cite{zhang_small_1995}. Recently, in \cite{yuan_adelic_2021}, Yuan and Zhang extended this theory to quasi-projective varieties. In what follows, we provide the necessary definitions and properties that we will use.
\begin{definition}
	Let $X$ be a projective variety over a number field $K$ and $L$ be a line bundle on $X$. An \emph{adelic metric} on $L$ is a \emph{coherent} collection $\{\|\cdot\|_v\}_v$ of bounded $K_v$-metrics $\|\cdot\|_v$ on $L_{K_v}$ over $X_{K_v}$ over all places $v$ of $K$. 
	
	The collection
	$\{\|\cdot\|_v\}_v$ is \emph{coherent} means that, there exists a finite set $S$ of non-archimedean
	places of $K$ and a (projective and flat) integral model $(\mc{X},\mc{L})$ of $(X, L)$ over
	$\Spe O_K - S$, such that the $K_v$-norm $\|\cdot\|_v$ is induced by $(\mc{X}_{O_{K_v}}, \mc{L}_{O_{K_v}})$ for
	all $v\in \Spe O_K - S$.
	
	We write $L=(L,\{\|\cdot\|_v\}_v)$ and call it an \emph{adelic line bundle} on $X$.
\end{definition}
In particular, suppose $(\mc{X},\ov{\mc{L}})$ is an arithmetic model of $(X,L)$ over $\Spe O_K$. Then the Hermitian line bundle $\ov{\mc{L}}$ induces an adelic line bundle $\ov{L}$ with generic fiber $L$.

Let $\ov{L}$ be a nef adelic line bundle whose generic fiber $L$ is ample. We define the height of a subvariety $V$ of $X$ by the following formula.
\[h_{\ov{L}}(V)=\frac{\big((\ov{L}|_V)^{\dim V+1}\big)}{(\dim V+1)(L^{\dim V}\cdot V)}.\]
Suppose $\ov{L}$ and $\ov{L}'$ are two nef adelic line bundles with the same generic fiber $L$, then there exists a constant $C>0$ such that for any subvariety $V$, we have
\begin{equation}\label{htbd}
|h_{\ov{L}}(V)-h_{\ov{L}'}(V)|\leq C.
\end{equation}

In fact, in order to compute the arithmetic degree, it is sufficient to use a nef adelic line bundle whose generic fiber is ample, rather than an ample Hermitian line bundle. To be specific, let $\ov{L}$ be a nef adelic line bundle on $X$ with an ample generic fiber $L$. We can define
\[\widehat{\deg}_{k+1,\ov{L}}(f,V)=\big(\pi_2^*\ov{L}^{k+1}\cdot \pi_1^{\#}V\big),\]
where $\pi_1,\pi_2$ are projections from $\Gamma_f$ onto the first and second component respectively.

Then we have the following lemma.
\begin{lem}\label{nefcompute}
	let $\ov{L}$ be a nef adelic line bundle on $X$ with an ample generic fiber $L$. Suppose $V$ is a $k$-dimensional  irreducible subvariety whose $f$-orbit is well-defined. Then the arithmetic degree can be computed by formulas
	\[\ov{\alpha}_{k+1}(f,V)=\limsup_{n\ra \infty}\max\left\{\widehat{\deg}_{k+1,\ov{L}}(f^n,V)^{1/n},\deg_{k,L}(f^n,V)^{1/n}\right\}\]
	and
	\[\underline{\alpha}_{k+1}(f,V)=\liminf_{n\ra \infty}\max\left\{\widehat{\deg}_{k+1,\ov{L}}(f^n,V)^{1/n},\deg_{k,L}(f^n,V)^{1/n}\right\}\]
\end{lem}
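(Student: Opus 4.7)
The strategy is to reduce the claim to the well-definedness of $\ov{\alpha}_{k+1}(f,V)$ established in Proposition \ref{welldefined}, by comparing the given nef adelic metric $\ov{L}$ to the adelic metric $\ov{L}'$ induced by an ample Hermitian line bundle. First I would choose an arithmetic model $\mc{X}$ of $X$ together with an ample Hermitian line bundle $\ov{\mc{L}}$ extending $L$, and let $\ov{L}'$ denote the adelic line bundle it induces on $X$; note that $\ov{L}'$ is nef with generic fiber $L$. By Proposition \ref{welldefined},
\[\ov{\alpha}_{k+1}(f,V)=\limsup_{n\ra\infty}\widehat{\deg}_{k+1,\ov{L}'}(f^n,V)^{1/n},\]
so the task reduces to comparing $\widehat{\deg}_{k+1,\ov{L}}(f^n,V)$ with $\widehat{\deg}_{k+1,\ov{L}'}(f^n,V)$ along the orbit of $V$.

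The key identity is that, for any nef adelic line bundle $\ov{M}$ with generic fiber $L$, the projection formula applied to $\pi_2$ gives
\[\widehat{\deg}_{k+1,\ov{M}}(f^n,V)=(k+1)\,h_{\ov{M}}(f^n(V))\cdot\deg_{k,L}(f^n,V),\]
where the multiplicity of $\pi_1^{\#}\mc{V}\ra f^n(\mc{V})$ via $\pi_2$ appears on both sides and cancels. Since $\ov{L}$ and $\ov{L}'$ are both nef adelic line bundles with the common generic fiber $L$, by (\ref{htbd}) there is a constant $C>0$ such that $|h_{\ov{L}}(W)-h_{\ov{L}'}(W)|\leq C$ for every irreducible subvariety $W\subset X$. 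Applying this with $W=f^n(V)$ yields the comparison
\[\bigl|\widehat{\deg}_{k+1,\ov{L}}(f^n,V)-\widehat{\deg}_{k+1,\ov{L}'}(f^n,V)\bigr|\leq (k+1)C\,\deg_{k,L}(f^n,V).\]

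For the upper bound $\limsup_n \max\{\widehat{\deg}_{k+1,\ov{L}}(f^n,V)^{1/n},\deg_{k,L}(f^n,V)^{1/n}\}\leq \ov{\alpha}_{k+1}(f,V)$, I would combine this comparison with Lemma \ref{Nakai} applied to the Hermitian line bundle $\ov{\mc{L}}$, which gives $\deg_{k,L}(f^n,V)\leq C_1\widehat{\deg}_{k+1,\ov{L}'}(f^n,V)$. Both quantities inside the $\max$ are then bounded by a fixed constant multiple of $\widehat{\deg}_{k+1,\ov{L}'}(f^n,V)$, so taking $n$-th roots and passing to $\limsup$ eliminates the constant. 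For the reverse inequality the comparison directly gives
\[\widehat{\deg}_{k+1,\ov{L}'}(f^n,V)\leq \bigl(1+(k+1)C\bigr)\max\bigl\{\widehat{\deg}_{k+1,\ov{L}}(f^n,V),\,\deg_{k,L}(f^n,V)\bigr\},\]
and again taking $n$-th roots and $\limsup$ completes the argument. Replacing $\limsup$ by $\liminf$ throughout produces the analogous identity for $\underline{\alpha}_{k+1}(f,V)$.

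The main technical subtlety is to rigorously establish the identity $\widehat{\deg}_{k+1,\ov{M}}(f^n,V)=(k+1)h_{\ov{M}}(f^n(V))\deg_{k,L}(f^n,V)$ for a general nef adelic line bundle $\ov{M}$, which requires interpreting the arithmetic intersection number in the sense of Yuan--Zhang and correctly tracking the multiplicity of the map $\pi_1^{\#}\mc{V}\ra f^n(\mc{V})$ when $f^n$ is not generically injective on $V$. Once this identity is verified, the rest of the argument is purely formal manipulation of the comparison inequality.
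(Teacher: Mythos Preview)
Your proposal is correct and follows essentially the same approach as the paper: both compare the adelic intersection number $\widehat{\deg}_{k+1,\ov{L}}$ to the model-based one $\widehat{\deg}_{k+1,\ov{\mc{L}}}$ via the height bound (\ref{htbd}), obtaining a discrepancy controlled by $\deg_{k,L}(f^n,V)$, and then use Lemma \ref{Nakai} to absorb this geometric term. The only minor omission is that the paper first replaces $L$ by a multiple $rL$ to guarantee the existence of an ample Hermitian model $\ov{\mc{L}}$ extending it, a harmless step you should also include.
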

\begin{proof}
We can replace $L$ by $rL$ for some integer $r$ such that there exists an arithmetic model $(\mc{X},\ov{\mc{L}})$ of $(X,L)$, where $\ov{\mc{L}}$ is an ample Hermitian line bundle. The rational self-map $f$ can be lifted to a rational self-map $F$ of $\mc{X}$. Let $\Gamma_F$ be the normalization of the graph of $F$ in $\mc{X}\times_{\Spe \mb{Z}}\mc{X}$, and $\pi_1,\pi_2$ be projections from $\Gamma_F$ onto the first and second component respectively.
By (\ref{htbd}), there exists a constant $C$ such that for any $f$,
\[\big|\big(\ov{L}^{k+1}\cdot \pi_{2,*}\pi_1^{\#}V\big)-\big(\ov{\mc{L}}^{k+1}\cdot \pi_{2,*}\pi_1^{\#}\mc{V}\big)\big|\leq C(k+1)\big(L^k\cdot \pi_{2,*}\pi_1^{\#}V\big).\]
Thus
\[ \widehat{\deg}_{k+1,\ov{\mc{L}}}(F,\mc{V})\leq \widehat{\deg}_{k+1,\ov{L}}(f,V)+C(k+1)\deg_{k,L}(f,V).\]
Replacing $f$ by $f^n$ and taking $n$-th root, we get
\[\ov{\alpha}_{k+1}(f,V)\leq \limsup_{n\ra \infty}\max\left\{\widehat{\deg}_{k+1,\ov{L}}(f^n,V)^{1/n},\deg_{k,L}(f^n,V)^{1/n}\right\}.\]
On the other hand,
\[\widehat{\deg}_{k+1,\ov{L}}(f,V)\leq \widehat{\deg}_{k+1,\ov{\mc{L}}}(F,\mc{V})+C(k+1)\deg_{k,L}(f,V).\]
According to Lemma \ref{Nakai}, there exists a constant $C_1$ such that
\[\deg_{k,L}(f,V)\leq C_1\cdot \widehat{\deg}_{k+1,\ov{\mc{L}}}(F,\mc{V}).\]
Then we have
\[\limsup_{n\ra \infty}\widehat{\deg}_{k+1,\ov{L}}(f^n,V)^{1/n}\leq \ov{\alpha}_{k+1}(f,V)\]
and
\[ \limsup_{n\ra \infty}\deg_{k,L}(f^n,V)^{1/n}\leq \ov{\alpha}_{k+1}(f,V).\]
This implies
\[\ov{\alpha}_{k+1}(f,V)=\limsup_{n\ra \infty}\max\left\{\widehat{\deg}_{k+1,\ov{L}}(f^n,V)^{1/n},\deg_{k,L}(f^n,V)^{1/n}\right\}.\]
Using the same approach, one can establish the formula for $\underline{\alpha}_{k+1}(f,V)$.
\end{proof}

\begin{remark}
	There exist examples where \[\lim_{n\ra\infty}\widehat{\deg}_{k+1,\ov{L}}(f^n,V)^{1/n}<\lim_{n\ra\infty}\deg_{k,L}(f^n,V)^{1/n}.\]
	Therefore, in Definition \ref{def2}, we cannot use a nef adelic line bundle $\ov{L}$ with an ample generic fiber to define arithmetic degrees.
\end{remark}

Now let $f$ be a polarized endomorphism on $X$, that is, an ample line bundle $L$ exists such that such that $f^*L=qL$ for some integer $q\geq 2$. In \cite{zhang_small_1995}, by Tate's limiting argument, Zhang constructed a nef adelic line bundle $\ov{L}_f$ extending $L$, such that $f^*\ov{L}_f=q\ov{L}_f$. The height $h_{\ov{L}_f}$ is then called the \emph{canonical height}, which shares many properties with the classical canonical height of points.

When $f$ is a polarized endomorphism, arithmetic degrees can be easily computed.
\begin{prop}\label{egpolarized}
	Let $X$ be a projective variety over a number field $K$ of dimension $d$ and $L$ be an ample line bundle on $X$. Suppose $f:X\ra X$ is a polarized endomorphism with $f^*L=qL$, $q>1$. Then the arithmetic degrees are given by
	\[\alpha_{k+1}(f)=q^{k+1}\]
	and
	\[\alpha_{k+1}(f,V) = \begin{cases}
		q^{k+1}, & \text{if } h_{\ov{L}_f}(V)>0 \\
		q^k, & \text{if } h_{\ov{L}_f}(V)=0.
	\end{cases}\]
In particular, $\alpha_{k+1}(f,V)=\alpha_k(f)$ if and only if $h_{\ov{L}_f}(V)>0$.
\end{prop}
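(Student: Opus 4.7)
The first formula follows from the relative degree formula. Since $f^*L = qL$, iteration gives $((f^n)^*L^j\cdot L^{d-j}) = q^{nj}(L^d)$, so $\lambda_j(f) = q^j$ for every $0 \leq j \leq d$. Theorem \ref{RelaDeg2} then yields $\alpha_{k+1}(f) = \max\{\lambda_{k+1}(f), \lambda_k(f)\} = q^{k+1}$.

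For the arithmetic degree of $V$, I would compute via Zhang's canonical nef adelic line bundle $\ov{L}_f$ (which has ample generic fiber $L$ and satisfies $f^*\ov{L}_f = q\ov{L}_f$), justified by Lemma \ref{nefcompute}: this allows us to extract $\alpha_{k+1}(f,V)$ from the growth rates of $\widehat{\deg}_{k+1,\ov{L}_f}(f^n,V)$ and $\deg_{k,L}(f^n,V)$. On the normalization $\Gamma_{F^n}$ of the graph of $f^n$, $\pi_1$ is birational and $\pi_2 = f^n\circ \pi_1$ on the generic fiber, so iteration of $f^*\ov{L}_f = q\ov{L}_f$ upgrades to the adelic identity $\pi_2^*\ov{L}_f = q^n\pi_1^*\ov{L}_f$. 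The projection formula then gives
\begin{align*}
\widehat{\deg}_{k+1,\ov{L}_f}(f^n, V) &= (\pi_2^*\ov{L}_f^{k+1}\cdot \pi_1^{\#}V) = q^{n(k+1)}(\ov{L}_f^{k+1}\cdot V)\\
&= q^{n(k+1)}(k+1)\,h_{\ov{L}_f}(V)\,(L^k\cdot V),
\end{align*}
and the analogous calculation with $L$ in place of $\ov{L}_f$ yields $\deg_{k,L}(f^n, V) = q^{nk}(L^k\cdot V)$.

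Feeding these two expressions into Lemma \ref{nefcompute} resolves the proposition. If $h_{\ov{L}_f}(V) > 0$ then the arithmetic factor dominates (as $q > 1$) and its $n$-th root converges to $q^{k+1}$; if $h_{\ov{L}_f}(V) = 0$ the arithmetic factor vanishes identically and only the geometric contribution $q^k$ survives. In either case both limits exist on the nose, so $\ov{\alpha}_{k+1}(f,V) = \underline{\alpha}_{k+1}(f,V)$, and the \emph{in particular} clause is an immediate case check against the values $\alpha_{k+1}(f) = q^{k+1}$ and $\alpha_k(f) = q^k$ provided by the first part. The main technical point is the promotion of $f^*\ov{L}_f = q\ov{L}_f$ to the identity $\pi_2^*\ov{L}_f = q^n\pi_1^*\ov{L}_f$ on $\Gamma_{F^n}$ as an equality of nef adelic line bundles; this is built into Tate's limit construction of $\ov{L}_f$ and uses essentially that $f$ is a genuine morphism on $X$, not merely a rational self-map.
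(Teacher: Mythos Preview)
Your proof is correct and follows essentially the same route as the paper: compute $\lambda_j(f)=q^j$ directly, apply the relative degree formula for $\alpha_{k+1}(f)$, then use Zhang's canonical adelic line bundle $\ov{L}_f$ together with Lemma \ref{nefcompute} to reduce $\alpha_{k+1}(f,V)$ to the explicit growth rates $q^{n(k+1)}(\ov{L}_f^{k+1}\cdot V)$ and $q^{nk}(L^k\cdot V)$. Your extra care in justifying the identity $\pi_2^*\ov{L}_f=q^n\pi_1^*\ov{L}_f$ on $\Gamma_{F^n}$ is a welcome elaboration of what the paper writes simply as $(f^n)^*\ov{L}_f^{k+1}\cdot V$, exploiting that $f$ is a genuine morphism.
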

\begin{proof}
For any integer $0\leq k\leq d$, the dynamical degree can be computed by 
\[\lambda_k(f)=\lim_{n\ra \infty}((f^n)^*L^k\cdot L^{d-k})^{1/n}=q^k\lim_{n\ra \infty}(L^d)^{1/n}=q^k.\]
Hence the arithmetic degree can be computed by 
\[\alpha_{k+1}(f)=\max\{\lambda_k(f),\lambda_{k+1}(f)\}=q^{k+1}.\]
In addition, we have
\[\widehat{\deg}_{k+1,\ov{L}_f}(f^n,V)=\big((f^n)^*\ov{L}_f^{k+1}\cdot V\big)=q^{n(k+1)}\big(\ov{L}_f^{k+1}\cdot V\big),\]
and
\[\deg_{k,L}(f^n,V)=\big((f^n)^*L^k\cdot V\big)=q^{nk}\big(L^k\cdot V\big).\]

If $h_{\ov{L}_f}(V)=0$, then $\big(\ov{L}_f^{k+1}\cdot V\big)=0$ by the definition of height.
Consequently, by Lemma \ref{nefcompute}, the arithmetic degree of $(f,V)$ can be expressed as
\[\alpha_{k+1}(f,V)=\lim_{n\ra \infty}\max\left\{\widehat{\deg}_{k+1,\ov{L}_f}(f^n,V)^{1/n},\deg_{k,L}(f^n,V)^{1/n}\right\}=\lim_{n\ra \infty}q^k(L^k\cdot V)^{1/n}=q^k.\]

If $h_{\ov{L}_f}(V)>0$, then $\big(\ov{L}_f^{k+1}\cdot V\big)>0$. This implies that
\[\alpha_{k+1}(f,V)=\lim_{n\ra \infty}\max\left\{\widehat{\deg}_{k+1,\ov{L}_f}(f^n,V)^{1/n},\deg_{k,L}(f^n,V)^{1/n}\right\}=\lim_{n\ra \infty}q^{k+1}(\ov{L}_f^{k+1}\cdot V)^{1/n}=q^{k+1}.\]
\end{proof}

In \cite[Theorem 2.4]{zhang_small_1995}, Zhang proved that a point $P\in X$ is preperiodic if and only if $h_{\ov{L}_f}(P)=0$. He also proposed a conjecture that states that a subvariety $V$ with $\dim V\geq 1$ is preperiodic if and only if $h_{\ov{L}_f}(V)=0$. However, in \cite{ghioca_towards_2011}, a counterexample was constructed using elliptic curves with complex multiplication. We will demonstrate that it also serves as a counterexample to Conjecture \ref{Kawa-Sil}(2).

Let $E: y^2=x^3+x$ be an elliptic curve with complex multiplication over $K=\mb{Q}(\mr{i})$. There is an isomorphism $\iota: \mb{Z}[\mr{i}]\ra \mr{End}(E)$ given by $\iota(\omega)=[\omega]$. The endomorphism $[\mr{i}]$ is defined by $(x,y)\mapsto (-x,\mr{i}y)$. Let $f=([2-\mr{i}],[2+\mr{i}])$ be an endomorphism of $E\times_K E$ and $\Delta=\{(x,x)|x\in E\}$ be the diagonal of $E\times_K E$. Then we have the following corollary.
\begin{cor}
With all the notations as above, then the $f$-orbit of $\Delta$ is Zariski dense in $E\times_K E$ and $\alpha_2(f,\Delta)<\alpha_2(f)$. 
\end{cor}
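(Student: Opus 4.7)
The plan is to recognize $f$ as a polarized endomorphism of $E\times_K E$, reduce the target inequality $\alpha_2(f,\Delta)<\alpha_2(f)$ via Proposition~\ref{egpolarized} to the vanishing of the canonical height $h_{\ov{L}_f}(\Delta)$, verify the latter by identifying $\Delta$ as an abelian subvariety, and finally confirm Zariski density of the orbit by a direct computation in the endomorphism ring $\mr{End}(E)=\mb{Z}[i]$.

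First I would fix a symmetric ample line bundle $L_1$ on $E$ (for instance $\mO_E(2[O])$), for which $[\omega]^*L_1\cong N_{K/\mb{Q}}(\omega)\cdot L_1$ for every $\omega\in\mb{Z}[i]$. Setting $L=p_1^*L_1\otimes p_2^*L_1$ on $E\times_K E$ and using $N(2\pm i)=5$, one computes
\[f^*L \;=\; p_1^*[2-i]^*L_1\otimes p_2^*[2+i]^*L_1 \;\cong\; 5L,\]
so $f$ is polarized with $q=5$, and Proposition~\ref{egpolarized} immediately gives $\alpha_2(f)=5^2=25$ together with the dichotomy $\alpha_2(f,\Delta)\in\{5,25\}$ determined by the sign of $h_{\ov{L}_f}(\Delta)$. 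Tate's limiting procedure identifies $\ov{L}_f=p_1^*\ov{L}_1^{NT}\otimes p_2^*\ov{L}_1^{NT}$, where $\ov{L}_1^{NT}$ is the N\'eron--Tate metrization of $L_1$; in particular, $\hat h_{\ov{L}_f}(P_1,P_2)=\hat h_E(P_1)+\hat h_E(P_2)$. Now $\Delta$ is an abelian subvariety of $E\times_K E$ (the image of the diagonal homomorphism $E\to E\times E$), and the canonical height of any abelian subvariety with respect to a N\'eron--Tate metrized line bundle vanishes (a direct consequence of $[n]^*\ov{L}_1^{NT}\cong n^2\ov{L}_1^{NT}$ together with $[n]\Delta=\Delta$; equivalently, torsion is Zariski dense in $\Delta$ with height zero, so Zhang's theorem on successive minima \cite{zhang_small_1995} applies). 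Therefore $h_{\ov{L}_f}(\Delta)=0$, and Proposition~\ref{egpolarized} yields $\alpha_2(f,\Delta)=5<25=\alpha_2(f)$.

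For the Zariski density of the orbit, set $C_n:=f^n(\Delta)=\{([2-i]^nx,[2+i]^nx):x\in E\}$, an irreducible curve in the surface $E\times_K E$. It suffices to prove the $C_n$ are pairwise distinct: the Zariski closure of $\bigcup_n C_n$ then contains infinitely many distinct irreducible $1$-dimensional components, hence cannot be $1$-dimensional (a Noetherian space has only finitely many irreducible components), and so must equal all of $E\times_K E$. If instead $C_n=C_m$ with $n>m$, then for every $x\in E$ there exists $x'\in E$ satisfying $[2-i]^nx=[2-i]^mx'$ and $[2+i]^nx=[2+i]^mx'$; eliminating $x'$ forces $\bigl([2-i]^{n-m}-[2+i]^{n-m}\bigr)x$ to lie in a fixed finite subset of $E$ for every $x$, whence the endomorphism $[(2-i)^{n-m}-(2+i)^{n-m}]\in\mb{Z}[i]$ must vanish. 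This is impossible because $(2-i)/(2+i)=(3-4i)/5$ is not a root of unity --- indeed it is not even an algebraic integer. The main obstacle is the height computation $h_{\ov{L}_f}(\Delta)=0$, for which one needs either the abelian-subvariety argument or Zhang's successive-minima theorem; the polarization verification and the distinctness argument are elementary calculations in $\mb{Z}[i]$.
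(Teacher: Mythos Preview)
Your proof is correct and a bit more self-contained than the paper's. Both proofs verify that $f$ is polarized with $q=5$ and then invoke Proposition~\ref{egpolarized}, but they diverge on the two key points. For the vanishing $h_{\ov{L}_f}(\Delta)=0$, the paper quotes \cite[Theorem~1.2]{ghioca_towards_2011} to get that $f$-preperiodic points are Zariski dense in $\Delta$ and then appeals to Zhang's successive minima; you instead exploit that $\Delta$ is an abelian subvariety, identify $\ov{L}_f$ with the product N\'eron--Tate metrization, and deduce the vanishing directly from $[n]^*\ov{L}_f\cong n^2\ov{L}_f$ (or, equivalently, from density of torsion). For Zariski density, the paper argues that $\Delta$ is not preperiodic (again citing \cite{ghioca_towards_2011}) and has codimension~$1$; you compute directly in $\mr{End}(E)=\mb{Z}[i]$ that $C_n=C_m$ would force $(2-i)^{n-m}=(2+i)^{n-m}$, impossible since $(2-i)/(2+i)=(3-4i)/5$ is not an algebraic integer. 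Your route avoids the external reference to \cite{ghioca_towards_2011} entirely, at the cost of hiding the connection to the dynamical Manin--Mumford counterexample that the paper wants to emphasize; the paper's route is shorter to state but leans on a deeper cited result. One small remark: in your elimination of $x'$, the cleanest way is to multiply the two equations by $[2+i]^m$ and $[2-i]^m$ respectively and subtract, which shows $[(2-i)^{n-m}-(2+i)^{n-m}]x\in E[5^m]$ for all $x$; your phrasing ``a fixed finite subset'' is correct but you might make this explicit.
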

\begin{proof}
Firstly, by properties of isogenies, we can compute the degree $\deg([2+\mr{i}])=(\mb{Z}[\mr{i}]:(2+\mr{i}))=5$ and $\deg([2-\mr{i}])=5$. 
	
Consider an ample line bundle $M$ satisfying $[2+\mr{i}]^*M=5M$ and $[2-\mr{i}]^*M=5M$. Let $p_1$ and $p_2$ be the projections from $E\times_K E$ to the first and second coordinates, respectively, and set $L=p_1^*M+p_2^*M$. Then $L$ is an ample line bundle and the map $f=([2+\mr{i}],[2-\mr{i}])$ is a polarized endomorphism with $f^*L=5L$. Since $\frac{2+\mr{i}}{2-\mr{i}}$ is not a root of unity, we know from \cite[Theorem 1.2]{ghioca_towards_2011} that the preperiodic points are dense in $\Delta$, and $\Delta$ itself is not preperiodic. Then the orbit of $\Delta$ is Zariski dense in $E\times_K E$ since $\mr{codim}\,\Delta=1$.

We now proceed to compute the arithmetic degree of the endomorphism $f$. According to  \cite[Theorem 1.10]{zhang_small_1995}, if $h_{\overline{L}_f}(\Delta)\neq 0$, there must exist a Zariski open set $U$ of $\Delta$ such that $h_{\overline{L}_f}$ on $U(\ov{K})$ has a positive lower bound, and any preperiodic point of $\Delta$ will be contained in $\Delta-U$. However, since prerperiodic points are dense in $\Delta$, there is a contradiction. Hence $h_{\overline{L}_f}(\Delta)=0$. 
		
By Proposition \ref{egpolarized}, we get $\alpha_{2}(f,\Delta)=5<25=\alpha_{2}(f)$.
\end{proof}

Since Conjecture \ref{Kawa-Sil}(2) is false, it becomes pertinent to investigate the specific criteria that guarantee the equality $\alpha_{k+1}(f,V)=\alpha_{k+1}(f)$ between these two arithmetic degrees. In the case of polarized endomorphisms, this equality holds if the canonical height of $V$ is positive, which is closely connected to the dynamical Manin-Mumford conjecture. However, for general rational self-map $f$, the question remains open, prompting us to propose the following problem:
\begin{prb}
Consider a normal projective variety $X$ over $\ov{\mb{Q}}$, and let $f: X\dashrightarrow X$ be a dominant rational self-map. For $k>0$, what geometric or dynamical conditions on $k$-dimensional irreducible subvarieties $V\subset X$ lead to the equality $\alpha_{k+1}(f,V)=\alpha_{k+1}(f)$?
\end{prb}

	\bibliography{REFv3}

\newcommand{\etalchar}[1]{$^{#1}$}
\begin{thebibliography}{DGH{\etalchar{+}}22}

\bibitem[BGS94]{bost_heights_1994}
J.-B. Bost, H.~Gillet, and C.~Soulé.
\newblock Heights of projective varieties and positive {Green} forms.
\newblock {\em Journal of the American Mathematical Society}, 7(4):903--1027,
  1994.

\bibitem[Dan20]{dang_degrees_2020}
Nguyen-Bac Dang.
\newblock Degrees of iterates of rational maps on normal projective varieties.
\newblock {\em Proceedings of the London Mathematical Society. Third Series},
  121(5):1268--1310, 2020.

\bibitem[Dem]{demailly_complex_nodate}
Jean-Pierre Demailly.
\newblock {\em Complex analytic and differential geometry}.
\newblock Citeseer.

\bibitem[DGH{\etalchar{+}}22]{dang_higher_2022}
Nguyen-Bac Dang, Dragos Ghioca, Fei Hu, John Lesieutre, and Matthew Satriano.
\newblock Higher arithmetic degrees of dominant rational self-maps.
\newblock {\em Annali della Scuola Normale Superiore di Pisa. Classe di
  Scienze. Serie V}, 23(1):463--481, 2022.

\bibitem[DN11]{dinh_comparison_2011}
Tien-Cuong Dinh and Viêt-Anh Nguyên.
\newblock Comparison of dynamical degrees for semi-conjugate meromorphic maps.
\newblock {\em Commentarii Mathematici Helvetici. A Journal of the Swiss
  Mathematical Society}, 86(4):817--840, 2011.

\bibitem[DS05]{dinh_borne_2005}
Tien-Cuong Dinh and Nessim Sibony.
\newblock Une borne supérieure pour l'entropie topologique d'une application
  rationnelle.
\newblock {\em Annals of Mathematics. Second Series}, 161(3):1637--1644, 2005.

\bibitem[Fek23]{fekete_uber_1923}
M.~Fekete.
\newblock Über die {Verteilung} der {Wurzeln} bei gewissen algebraischen
  {Gleichungen} mit ganzzahligen {Koeffizienten}.
\newblock {\em Mathematische Zeitschrift}, 17(1):228--249, 1923.

\bibitem[Ful98]{fulton_intersection_1998}
William Fulton.
\newblock {\em Intersection theory}, volume~2 of {\em Ergebnisse der
  {Mathematik} und ihrer {Grenzgebiete}. 3. {Folge}. {A} {Series} of {Modern}
  {Surveys} in {Mathematics} [{Results} in {Mathematics} and {Related} {Areas}.
  3rd {Series}. {A} {Series} of {Modern} {Surveys} in {Mathematics}]}.
\newblock Springer-Verlag, Berlin, second edition, 1998.

\bibitem[GS90]{gillet_arithmetic_1990}
Henri Gillet and Christophe Soulé.
\newblock Arithmetic intersection theory.
\newblock {\em Institut des Hautes Études Scientifiques. Publications
  Mathématiques}, (72):93--174, 1990.

\bibitem[GTZ11]{ghioca_towards_2011}
Dragos Ghioca, Thomas~J. Tucker, and Shouwu Zhang.
\newblock Towards a dynamical {Manin}-{Mumford} conjecture.
\newblock {\em International Mathematics Research Notices. IMRN},
  (22):5109--5122, 2011.

\bibitem[Har77]{hartshorne_algebraic_1977}
Robin Hartshorne.
\newblock {\em Algebraic geometry}.
\newblock Graduate {Texts} in {Mathematics}, {No}. 52. Springer-Verlag, New
  York-Heidelberg, 1977.

\bibitem[JSXZ21]{jia_endomorphisms_2021}
Jia Jia, Takahiro Shibata, Junyi Xie, and De-Qi Zhang.
\newblock Endomorphisms of quasi-projective varieties–towards {Zariski} dense
  orbit and {Kawaguchi}-{Silverman} conjectures.
\newblock {\em arXiv preprint arXiv:2104.05339}, 2021.

\bibitem[KS16a]{kawaguchi_dynamical_2016}
Shu Kawaguchi and Joseph~H. Silverman.
\newblock Dynamical canonical heights for {Jordan} blocks, arithmetic degrees
  of orbits, and nef canonical heights on abelian varieties.
\newblock {\em Transactions of the American Mathematical Society},
  368(7):5009--5035, 2016.

\bibitem[KS16b]{kawaguchi_dynamical_2016-1}
Shu Kawaguchi and Joseph~H. Silverman.
\newblock On the dynamical and arithmetic degrees of rational self-maps of
  algebraic varieties.
\newblock {\em Journal für die Reine und Angewandte Mathematik. [Crelle's
  Journal]}, 713:21--48, 2016.

\bibitem[Mat20a]{matsuzawa_kawaguchi-silverman_2020}
Yohsuke Matsuzawa.
\newblock Kawaguchi-{Silverman} conjecture for endomorphisms on several classes
  of varieties.
\newblock {\em Advances in Mathematics}, 366:107086, 26, 2020.

\bibitem[Mat20b]{matsuzawa_upper_2020}
Yohsuke Matsuzawa.
\newblock On upper bounds of arithmetic degrees.
\newblock {\em American Journal of Mathematics}, 142(6):1797--1820, 2020.

\bibitem[Mor14]{moriwaki_arakelov_2014}
Atsushi Moriwaki.
\newblock {\em Arakelov geometry}, volume 244 of {\em Translations of
  {Mathematical} {Monographs}}.
\newblock American Mathematical Society, Providence, RI, 2014.

\bibitem[Mor15]{moriwaki_semiample_2015}
Atsushi Moriwaki.
\newblock Semiample invertible sheaves with semipositive continuous hermitian
  metrics.
\newblock {\em Algebra \& Number Theory}, 9(2):503--509, 2015.

\bibitem[MW22]{matsuzawa_arithmetic_2022}
Yohsuke Matsuzawa and Long Wang.
\newblock Arithmetic degrees and {Zariski} dense orbits of cohomologically
  hyperbolic maps.
\newblock {\em arXiv preprint arXiv:2212.05804}, 2022.

\bibitem[MZ22]{meng_kawaguchi-silverman_2022}
Sheng Meng and De-Qi Zhang.
\newblock Kawaguchi-{Silverman} conjecture for certain surjective
  endomorphisms.
\newblock {\em Documenta Mathematica}, 27:1605--1642, 2022.

\bibitem[QY24]{qu_arithmetic_2024}
Binggang Qu and Hang Yin.
\newblock Arithmetic {Demailly} approximation theorem.
\newblock {\em Advances in Mathematics}, 458:109961, 2024.

\bibitem[Siu93]{siu_effective_1993}
Yum~Tong Siu.
\newblock An effective {Matsusaka} big theorem.
\newblock {\em Université de Grenoble. Annales de l'Institut Fourier},
  43(5):1387--1405, 1993.

\bibitem[Tru20]{truong_relative_2020}
Tuyen~Trung Truong.
\newblock Relative dynamical degrees of correspondences over a field of
  arbitrary characteristic.
\newblock {\em Journal für die Reine und Angewandte Mathematik. [Crelle's
  Journal]}, 758:139--182, 2020.

\bibitem[Wan24]{wang_periodic_2024}
Long Wang.
\newblock Periodic points and arithmetic degrees of certain rational self-maps.
\newblock {\em Journal of the Mathematical Society of Japan}, 76(3):713--738,
  2024.

\bibitem[Xie23]{xie_remarks_2023}
Junyi Xie.
\newblock Remarks on algebraic dynamics in positive characteristic.
\newblock {\em Journal für die Reine und Angewandte Mathematik. [Crelle's
  Journal]}, 797:117--153, 2023.

\bibitem[Yua08]{yuan_big_2008}
Xinyi Yuan.
\newblock Big line bundles over arithmetic varieties.
\newblock {\em Inventiones Mathematicae}, 173(3):603--649, 2008.

\bibitem[YZ21]{yuan_adelic_2021}
Xinyi Yuan and Shou-Wu Zhang.
\newblock Adelic line bundles over quasi-projective varieties.
\newblock {\em arXiv preprint arXiv:2105.13587}, 2021.

\bibitem[Zha95a]{zhang_positive_1995}
Shouwu Zhang.
\newblock Positive line bundles on arithmetic varieties.
\newblock {\em Journal of the American Mathematical Society}, 8(1):187--221,
  1995.

\bibitem[Zha95b]{zhang_small_1995}
Shouwu Zhang.
\newblock Small points and adelic metrics.
\newblock {\em Journal of Algebraic Geometry}, 4(2):281--300, 1995.

\end{thebibliography}
	\bibliographystyle{unsrt}
\end{document}